\documentclass[12pt,a4paper]{amsart}
 \usepackage{amsfonts,amssymb,enumerate,color,bm,hhline,makecell,array}
\usepackage{array,mathrsfs}
\usepackage[all,ps,cmtip]{xy}
\usepackage[normalem]{ulem}
\usepackage{bm,mathtools}
\usepackage{hyperref}

\usepackage{enumitem}
\setlist[itemize]{noitemsep,nolistsep}
\setlist[enumerate]{noitemsep,nolistsep}
\usepackage{colortbl}

\setlength{\parindent}{10 mm}
\setlength{\textwidth}{171 mm}
\setlength{\topmargin} {-10 mm}
\setlength{\evensidemargin}{-6mm}
\setlength{\oddsidemargin}{-6mm}
\setlength{\footskip}{8 mm}
\setlength{\headheight}{8 mm}
\setlength{\textheight}{24.6 cm}
\setlength{\parskip}{2 mm}

\allowdisplaybreaks
\let\mathcal\mathscr

\def\Z{{\bf Z}}

\def\C{{\bf C}}

\def\P{{\bf P}}
\def\T{{\bf T}}

\def\phi{\varphi}

\def\bq{{\mathbf q}}

\def\cA{\mathcal{A}}

\def\cC{\mathcal{C}}
\def\cD{\mathcal{D}}

\def\cE{\mathcal{E}}

\def\cI{\mathcal{I}}

\def\cO{\mathcal{O}}

\def\cR{\mathcal{R}}

\def\cT{\mathcal{T}}
\def\cU{\mathcal{U}}
\def\cV{\mathcal{V}}

\def\lra{\longrightarrow}
\def\llra{\hbox to 10mm{\tofill}}
\def\lllra{\hbox to 15mm{\tofill}}

\def\llla{\hbox to 10mm{\leftarrowfill}}
\def\lllla{\hbox to 15mm{\leftarrowfill}}

\def\dra{\dashrightarrow}

\def\thra{\twoheadrightarrow}
\def\hra{\hookrightarrow}
\def\lhra{\ensuremath{\lhook\joinrel\relbar\joinrel\to}}

\def\isom{\simeq}
\def\eps{\varepsilon}

\def\tY{\widetilde{Y}}

\def\vide{\varnothing}
\def\emptyset{\varnothing}
\def\subset{\subseteq}
\def\supset{\supseteq}

\DeclareMathOperator{\isomto}{\stackrel{{}_{\scriptstyle\sim}}{\to}}

\DeclareMathOperator{\isomlra}{\stackrel{{}_{\scriptstyle\sim}}{\lra}}

\DeclareMathOperator{\Alb}{Alb}

\DeclareMathOperator{\EPW}{\mathbf{{M}}{}^{\mathrm{EPW}}}
\DeclareMathOperator{\bEPW}{\mathbf{\overline{M}}{}^{\mathrm{EPW}}}
\newcommand{\bcM}{{{\mathbf{M}}}^{\mathrm{GM}}}

\DeclareMathOperator{\Bl}{Bl}

\DeclareMathOperator{\Coker}{Coker}

\newcommand{\Db}{\mathrm{D^b}}
\newcommand{\Se}{\mathrm{S}}

\DeclareMathOperator{\Dis}{Disc}

\DeclareMathOperator{\Ext}{Ext}

\DeclareMathOperator{\Gr}{\mathsf{Gr}}
\DeclareMathOperator{\CGr}{\mathsf{CGr}}
\DeclareMathOperator{\LGr}{\mathsf{LGr}}

\DeclareMathOperator{\Hilb}{Hilb}
\DeclareMathOperator{\Hom}{Hom}
\DeclareMathOperator{\Id}{Id}
\def\Im{\mathop{\rm Im}\nolimits}

\DeclareMathOperator{\Ker}{Ker}

\DeclareMathOperator{\lin}{\underset{\mathrm lin}{\equiv}}

\DeclareMathOperator{\PGL}{PGL}
\DeclareMathOperator{\Pic}{Pic}

\DeclareMathOperator{\SO}{SO}

\DeclareMathOperator{\Prym}{Prym}

\DeclareMathOperator{\bSpec}{\mathbf{Spec}}

\DeclareMathOperator{\Sing}{Sing}

\DeclareMathOperator{\Sym}{Sym}

\def\llra{\hbox to 10mm{\tofill}}
\def\lllra{\hbox to 15mm{\tofill}}

\def\bw#1{\textstyle{\bigwedge\hskip-0.9mm^{#1}}}

\newtheorem{lemm}{Lemma}[section]
\newtheorem{theo}[lemm]{Theorem}
\newtheorem{coro}[lemm]{Corollary}
\newtheorem{prop}[lemm]{Proposition}

\theoremstyle{definition}
\newtheorem{defi}[lemm]{Definition}
\newtheorem{rema}[lemm]{Remark}

\newtheorem{conj}[lemm]{Conjecture}

\newtheorem{exam}[lemm]{Example}

\theoremstyle{remark}
\newtheorem*{remark*}{Remark}
\newtheorem*{note*}{Note}

\def\moins{\smallsetminus}
\def\setminus{\smallsetminus}

\newcommand{\gquot}{/\!\!/}


\begin{document}
\title{Gushel--Mukai varieties}

 \author[O.\ Debarre]{Olivier Debarre}
 
 \address{ Universit\'e Paris Cit\'e and Sorbonne Universit\'e, CNRS, IMJ-PRG, F-75013 Paris, France
} \email{{\tt  olivier.debarre@imj-prg.fr}}
 
\thanks{This survey was initially written for a series of lectures given at the Shanghai Center for Mathematical Sciences at  Fudan University in the fall of 2019.\ I would like to thank this institution and Zhi\ Jiang for the invitation and support,   Qixiao Ma for his many questions and remarks, Alexander Perry for his comments about competing conjectures for rationality of Gushel--Mukai fourfolds, Jean-Louis Colliot-Th\'el\`ene for his explanations about conic bundles, and, last but certainly not least, Alexander~Kuznetsov for correcting some of my mistakes and for the pleasure of working with him all these years.\\
\indent These notes were updated in 2025, while I  was supported by the European Research Council under the European Union's Horizon 2020 research and innovation programme (ERC-2020-SyG-854361-HyperK)}

\begin{abstract}
 Gushel--Mukai  varieties are smooth complex dimensionally transverse intersections of a cone over the Grassmannian $\Gr(2,5)$ with a linear space and a quadratic hypersurface.\ The aim of this survey is to discuss the geometry, moduli,  Hodge structures, and categorical aspects of these varieties.\ It is based on joint work with Alexander Kuznetsov and earlier work of Logachev, Iliev,  Manivel,   O'Grady, and others.
\end{abstract}

\makeatletter
\@namedef{subjclassname@2020}{%
  \textup{2020} Mathematics Subject Classification}
\makeatother

\subjclass[2020]{Primary 14J45, 14J30; Secondary 14J35, 14J40, 14J42, 14J60, 14M15, 14K30, 14F08, 18G80}
\keywords{Fano varieties, Gushel--Mukai varieties, hyper-K\"ahler varieties, intermediate Jacobians, Grassmannians,  EPW sextics,  Albanese varieties, 
period maps, Abel--Jacobi maps, rationality problems, derived categories, semiorthogonal decompositions, K3 categories, Enriques categories.}

 \maketitle

{\renewcommand{\baselinestretch}{0.5}\normalsize
\tableofcontents}

\section{Introduction}

{\em Gushel--Mukai} (or GM for short) {\em varieties} are smooth complex dimensionally transverse intersections of a cone over the Grassmannian $\Gr(2,5)$ with a linear space and a quadratic hypersurface.\ They occur in each dimension 1 through 6 and they are Fano varieties (their anticanonical bundle is ample) in dimensions 3, 4, 5, and~6 (in this text, we mostly restrict ourselves to these dimensions).\ 

These varieties first appeared in the classification of complex Fano threefolds: Gushel showed that any smooth    Fano threefold of index 1, degree 10, and Picard number~$1$ is a GM variety.\ Mukai later extended Gushel's results and proved that all Fano varieties of coindex 3, degree 10, and Picard number~$1$, all Brill--Noether-general polarized K3 surfaces of degree 10, and all Clifford-general curves of genus 6 are  GM varieties. 

One of the reasons why their geometry is so rich is that, to any GM variety is canonically attached a sextic hypersurface in ${\bf P}^5$, called an {\em Eisenbud--Popescu--Walter} (or EPW for short) {\em sextic} and a canonical double cover thereof, a
hyper-K\"ahler fourfold called a {\em double EPW sextic.}\ In some sense, the pair consisting of a GM variety and its associated double EPW sextic behaves very much (but with some complications and     differences) like a cubic hypersurface in ${\bf P}^5$ and its (hyper-K\"ahler) variety of lines.
Not many examples of these pairs---a Fano variety and a hyper-K\"ahler variety---are known (another example is given by the hyperplane sections of the Grassmannian $\Gr(3,10)$ and their associated Debarre--Voisin hyper-K\"ahler fourfold), so it is worth looking in depth into one of these.

The first main difference with cubic fourfolds is that there is a positive-dimensional family of GM varieties attached to the same EPW sextic; this family can be precisely described.\ Another distinguishing feature is that each EPW sextic has a dual EPW sextic (its projective dual).\ GM varieties associated with isomorphic or dual EPW sextics are all birationally isomorphic.

A   feature shared with cubic fourfolds is that the middle Hodge structure of a GM variety of {\em even} dimension is isomorphic (up to a Tate twist) to the second cohomology of its associated double EPW sextic.\ Together with the Verbitsky--Torelli theorem for hyper-K\"ahler fourfolds, this leads to a complete description of the period map for   GM varieties of dimensions 4 or 6.

In  {\em odd} dimensions 3 or 5, a GM variety has a 10 dimensional intermediate Jacobian and we show that it is isomorphic to the Albanese variety of a surface of general type canonically attached to the EPW sextic.\ So again, the Hodge structure of the GM variety is determined by  the associated EPW sextic.

Another aspect of GM varieties that makes them very close to cubic fourfolds is the  rationality problem.\ Whereas general (and conjecturally all) GM varieties of dimensions at most~3 are not rational, and GM varieties of dimensions at least 5 are all rational, the situation for GM fourfolds is still mysterious: some  rational examples are known but not a single irrational example is known, although one expects, as for cubic fourfolds, that a very general GM fourfold should be irrational.

Finally,  the derived categories of GM varieties were studied by Kuznetsov and Perry with this rationality problem in mind.\ They show that  each of these   categories contains a special subcategory, which is a K3 or Enriques category according to whether the dimension of the variety is even or odd.

Most of  my  contributions are joint work with Alexander Kuznetsov.

\section{Definition of Gushel--Mukai varieties}

We always work over $\C$ and $U_k$, $V_k$, $W_k$ all denote complex vector spaces of dimension $k$.

Let $X$ be a (smooth complex) Fano variety of dimension $n$ and Picard number $1$, so that $\Pic(X)=\Z H$, with $H$ ample, and  $-K_X\lin rH$ for some  positive integer $r$ called the {\em index} of~$X$.\ The integer $d\coloneqq H^n$ is the {\em degree.}\
For example, a smooth hypersurface $X\subset \P^{n+1}$ of degree $d\le n+1$ is a Fano variety of index $r=n+2-d$ and degree $d$.

 The index satisfies $r\le n+1$ and moreover,   
\begin{itemize}
\item if $r= n+1$, then $X\isom \P^n$ (Kobayashi--Ochiai, 1973; see \cite[Corollary to~Theorem~1.1]{ko});
\item if $r= n$, then $X$ is a  quadric in $ \P^{n+1}$ (Kobayashi--Ochiai, 1973; see \cite[Corollary to~Theorem~2.1]{ko});
\item if $r= n-1$ (del Pezzo varieties), there is a classification by Fujita and Iskovskikh (1977--1988; see \cite[Chapter~3]{ip});
\item if $r= n-2$, there is a  classification by Mukai (1989--1995; see \cite{muk1, muk2, muk3}),\footnote{Mukai's argument is in fact incomplete (see footnote~\ref{f1}).\ For a complete proof of a more general result, see \cite{bkm}.} modulo a result  proved later by Shokurov in dimension 3, and Mella in all dimensions~(\cite{mel}).
\end{itemize}

Mukai used the {\em vector bundle method} (initiated by Gushel in \cite{gush}) to prove that in the last case, $d\in \{2,4,6,8,10,12,14,16,18,22\}$.\ 

In this survey, we restrict ourselves  to the case $d=10$.\ We call the corresponding (smooth Fano) varieties {\em Gushel--Mukai} (or GM for short) varieties.

\begin{theo}[Mukai]\label{theo11}
Any smooth Fano $n$fold $X_n$ with Picard number 1, index $n-2$, and degree~$10$ has dimension $n\in\{3,4,5,6\}$ and can be obtained as follows:
\begin{itemize}
\item either $n\in\{3,4,5\}$ and $X_n=\Gr(2,V_5)\cap \P(W_{n+5})\cap Q\subset \P(\bw2V_5)$, where $Q$ is a quadric and $W_{n+5}\subset \bw2V_5$ is a vector subspace of dimension $n+5$ ({\em ordinary} case);
\item or $X_n\to \Gr(2,V_5)\cap \P(W_{n+4})$ is a double cover branched along an $X_{n-1}$ ({\em special} case).
\end{itemize}
\end{theo}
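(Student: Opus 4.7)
My plan is to carry out the \emph{vector bundle method} initiated by Gushel and developed by Mukai: construct on $X=X_n$ a canonical rank-$2$ vector bundle $\cE$ with $c_1(\cE)=H$, $h^0(X,\cE)=5$, and $\cE$ globally generated, then extract the classification from the induced morphism
\[
\phi_\cE\colon X\lra \Gr(2,V_5),\qquad V_5:=H^0(X,\cE)^\vee,
\]
whose composition with the Pl\"ucker embedding pulls $\cO(1)$ back to $H$.

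First I would reduce to the threefold case by induction on $n$.\ A very general hyperplane section $X_{n-1}\subset X_n$ is smooth; it has Picard number $1$ (Lefschetz, for $n\ge 4$), still degree $10$, and index $n-3$ by adjunction, since $-K_{X_{n-1}}=(-K_{X_n}-H)|_{X_{n-1}}\lin (n-3) H|_{X_{n-1}}$.\ Iterating reaches $n=3$, where Gushel's original argument applies: starting from a general conic $C\subset X_3$ (existence and correct Hilbert-scheme dimension being a classical Riemann--Roch/deformation computation), the Hartshorne--Serre correspondence produces $\cE$ as an extension
\[
0\lra \cO_{X_3}\lra \cE\lra \cI_{C/X_3}(H)\lra 0;
\]
stability of $\cE$ is automatic from $\Pic(X_3)=\Z H$ together with $c_1(\cE)=H$, the equality $h^0(\cE)=5$ follows from Riemann--Roch combined with Kodaira vanishing, and either $\cE$ is everywhere globally generated (\emph{ordinary} case) or it fails to be so along an effective divisor (\emph{special} case).\ For $n\ge 4$, the bundle propagates upward from a general hyperplane section via Lefschetz-type vanishing, which also preserves $h^0(\cE)=5$.

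Next I analyze $\phi_\cE$.\ Since $c_1(\cE)=H$ is ample and $\cE$ is globally generated, $\phi_\cE$ is finite, forcing $n\le\dim\Gr(2,V_5)=6$ (and $n\ge 3$ is automatic since the index $n-2\ge 1$), hence $n\in\{3,4,5,6\}$.\ A dichotomy then appears:
\begin{itemize}
\item if $\phi_\cE$ is a closed embedding, let $W_{n+5}\subset\bw2V_5$ be the linear span of $\phi_\cE(X)$; a degree comparison using $\deg\Gr(2,V_5)=5$ in $\P(\bw2V_5)$ and $H^n=10$ then forces $\phi_\cE(X)=\Gr(2,V_5)\cap\P(W_{n+5})\cap Q$ for some quadric $Q$, giving the ordinary case;
\item if $\phi_\cE$ is not an embedding, Stein factorization together with the identification $\cE=\phi_\cE^*\cU^\vee$ (with $\cU$ the tautological subbundle of $\Gr(2,V_5)$) exhibits $X$ as a degree-$2$ cover of an ordinary GM variety $\Gr(2,V_5)\cap\P(W_{n+4})$, and the branch locus is identified by Riemann--Hurwitz and the Picard-number-$1$ constraint as a lower-dimensional GM variety $X_{n-1}$.
\end{itemize}

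The main obstacle is Step 2: producing $\cE$ with the correct numerical and cohomological invariants on $X_3$, and then propagating it to higher dimensions without losing sections.\ The subtle point is the analysis of the locus where $\cE$ fails to be globally generated, since it is precisely this locus (empty versus an effective divisor) that separates the ordinary from the special case; pinning this down requires careful study of the variety of conics on $X_3$ and of the base scheme of $|\cE|$.
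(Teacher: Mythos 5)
Your overall strategy (the Gushel--Mukai vector bundle method: build a rank-2 globally generated $\cE$ with $c_1(\cE)=H$ and $h^0(\cE)=5$, then analyze $\gamma\colon X\to\Gr(2,V_5)$) is the paper's, but the construction step contains a genuine numerical error. If $\cE$ is obtained by Hartshorne--Serre from a conic $C\subset X_3$ via $0\to\cO_{X_3}\to\cE\to\cI_{C/X_3}(H)\to 0$, then $c_2(\cE)=[C]$ has degree $2$, whereas the Gushel bundle $\gamma^*\cU^\vee$ has $c_2$ of degree $4$ (its value on $H$ is $\int_{\Gr(2,V_5)}\sigma_{1,1}\cdot\sigma_1\cdot 2\sigma_1^3=4$). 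Riemann--Roch on $X_3$ gives $\chi(\cE)=9-\deg c_2(\cE)$, so your bundle has $\chi=7$, not $5$: even granting the vanishing of higher cohomology you would land in $\Gr(2,7)$, not $\Gr(2,5)$. The correct input is a degree-4 curve on $X_3$, or --- as in the paper, following Mukai --- a length-4 subscheme $Z$ on the K3 surface $S$ obtained by cutting with $n-2$ general members of $|H|$, where $Z$ is an element of a $g^1_4$ on a canonical genus-6 hyperplane section of $S$; there $h^0(S,\cI_Z(H_S))=4$ and Serre's construction yields $\cE_S$ with exactly $5$ sections, which one then extends up through the hyperplane sections to $X$.

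The second problem is your dichotomy. The split between ordinary and special is \emph{not} ``$\cE$ globally generated everywhere versus failing along a divisor'': in both cases $\cE=\gamma^*\cU^\vee$ is globally generated and the Gushel map is a finite morphism on all of $X$. The paper's criterion is the corank of the multiplication map $\eta\colon\bw2 H^0(X,\cE)\to H^0(X,\bw2\cE)\isom H^0(X,H)$: corank $0$ gives the ordinary case ($\gamma$ is an embedding onto a quadric section of $M_X=\Gr(2,V_5)\cap\P(W_{n+5})$, after one checks that $M_X$ is smooth so that the degree-10 divisor $\phi_H(X)$ is cut by a quadric --- a point your degree comparison silently uses); corank $1$ realizes $\phi_H(X)$ as a quadric section of the cone over $\Gr(2,V_5)\cap\P(W_{n+4})$, whence the branched double cover; and corank $\ge 2$ must be \emph{excluded} by showing it forces $\phi_H(X)$ to be singular. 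Your sketch omits this exclusion step entirely, and the identification of the branch locus as an $X_{n-1}$ comes from exhibiting $X$ inside the cone and intersecting with the quadric, not from Riemann--Hurwitz alone.
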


In fact, Mukai had to make the extra assumption that the linear system $|H|$ contains a smooth element, a fact that was proved only later on by Mella (who actually showed that $H$ is very ample).\ 

In both cases, the morphism $\gamma\colon  X_n\to \Gr(2,V_5)\subset \P(\bw2V_5)$ (called the {\em Gushel morphism}) is defined by   a linear subsystem of $|H|$. 

In dimension $n=1$, these constructions provide   genus 6 curves which are general in the sense that they are neither   hyperelliptic, nor trigonal, nor  plane quintics (this is also termed {\em Clifford general} because the Clifford index has the maximal value $2$); in dimension $n=2$, these constructions provide   K3 surfaces of degree 10 which are general in the sense that their general hyperplane sections are Clifford general genus 6 curves (they are also called {\em Brill--Noether general} K3 surfaces; see \cite[Definition~3.8]{muk3}).

\begin{proof}[Sketch of proof]
Set $X\coloneqq X_n$.\ In order to construct the Gushel morphism $\gamma\colon X\to \Gr(2,V_5)$ with $\gamma^*\cO_{\Gr(2,V_5)}(1)=H$, one needs  to  construct a rank 2 globally generated vector bundle $\cE$ on~$X$ with 5 independent sections and $c_1(\cE)=H$; if $\cU$ is the tautological rank 2 vector bundle on $ \Gr(2,V_5)$, one then has $\cE=\gamma^*\cU^\vee$.\ By Mella's result, the line bundle $H$ is very ample hence defines an embedding $\phi_H\colon X\hra \P(W_{n+5})$, where $W_{n+5}\coloneqq  H^0(X,H)^\vee$.

We give in \cite{dk1} a construction of $\cE$ based on the excess conormal  sheaf: we prove that $X\subset  \P(W_{n+5})$ is the base locus of the space of quadrics $V_6\coloneqq  H^0(\P(W_{n+5}),\cI_X(2))$ and we define
$\cE$ as the dual of the (rank 2) kernel of the canonical surjection
$$V_6 \otimes \cO_X \thra (\cI_X/\cI_X^2 )(2)= N^\vee_{X/\P(W_{n+5})}(2).$$

 Mukai had a different construction   that went roughly as follows (\cite{muk1, muk3}).\ The intersection of $n-2$ general elements of $|H|$ is a smooth K3 surface~$S$ and the restriction $\cE_S$ of $\cE$ to~$S$ must satisfy  $c_1(\cE_S)=H_S$ and $c_2(\cE_S)=4$.

 So we start out by constructing such a vector bundle on the genus 6 K3 surface $S\subset \P^6$ using  Serre's construction.\ A general hyperplane section  of $S$ is a canonical curve $C$ of genus 6, hence has a $g^1_4$.\ Let $Z\subset C$ be an element of this pencil, so that 
  \begin{eqnarray*}
h^0(S, \cI_Z(H_S))&=&1+h^0(C,H_S-Z)\\
&=&1+h^1(C,g^1_4)\\
&=&1+h^0(C,g^1_4)-(4+1-6)=4.
\end{eqnarray*}
By Serre's construction, there is a vector bundle $ \cE_S$ on $S$ that fits into an exact sequence
 $$0\to \cO_S\to \cE_S\to \cI_Z(H_S)\to 0.$$
 
 Assume $n=3$.\ One proves that $\cE_S$   extends   to a locally free sheaf $\cE$ on~$X$.\footnote{\label{f1}The argument given by Mukai for this step is incorrect, since it relies on an extension theorem   of Fujita that does not apply to surfaces.\ It was only recently fixed in \cite{bkm}.} Then one checks that $V_5\coloneqq  H^0(X,\cE)^\vee$ has dimension 5 and that $\cE$ is generated by global sections hence defines
 a morphism $\gamma\colon X\to \Gr(2,V_5)$.
 
  We have a linear map 
   $$\eta\colon \bw2 H^0(X,\cE) \to H^0(X,\bw2\cE)\isom H^0(X,H) 
 $$
and  a commutative diagram 
\begin{equation*}\label{diag}
\xymatrix
@C=50pt@M=6pt
{
X\ar[r]^\psi\ar@{_(->}[d]_{\phi_H}&\Gr(2,V_5)\ar@{_(->}[d]\\
\P (H^0(X,H)^\vee)\ar@{-->}[r]^{\eta^\vee}&\P(\bw2V_5).}
\end{equation*}

{\em If $\eta$ is surjective,} the  map $\eta^\vee$ in the diagram    is a morphism, which is moreover injective.\ The image of $\eta^\vee$ is a    $\P(W_8)\subset \P(\bw2V_5)$.\ The intersection $M_X\coloneqq \Gr(2,V_5)\cap \P(W_8)$ has dimension~4 (because all hyperplane sections of $ \Gr(2,V_5)$ are irreducible) and degree 5, and $\phi_H(X) \subset M_X$ is a smooth hypersurface.\ It follows that $M_X$ must be smooth (otherwise, one easily sees that its singular locus would have positive dimension hence would meet $\phi_H(X)$, which is impossible).\ Thus, $\phi_H(X)$ is a Cartier divisor, of  degree 10, hence is the intersection of $M_X$   with a quadric.

{\em If the corank of $\eta$ is 1,}  consider the cone
$\CGr\subset \P(\C\oplus \bw2V_5)$, with vertex $v=\P(\C)$, over  $\Gr(2,V_5)$.
 In the above diagram, we may view $H^0(X,H)^\vee$ as embedded in $\C\oplus \bw2V_5$ by mapping $(\Im (\eta))^\bot$ to $\C$, and $\eta^\vee$ as the projection from $v$, with image a $\P(W_7)\subset \P(\bw2V_5)$.\ Again, the intersection $M_X\coloneqq \Gr(2,V_5)\cap \P(W_7)=\Im(\psi)$ is a smooth threefold and $\phi_H(X)$ is the intersection of   the (smooth locus of the) degree 5 cone over $M_X$  with a quadric.

 {\em If the corank of $\eta$ is $ \ge 2$,} one shows that $\phi_H(X)$ must be singular, which is impossible.\ So this proves the theorem when $n=3$.
 
For $n\ge 4$, one extends $\cE$ to successive hyperplane sections of $X$ all the way up to $X$ and proceed similarly (see \cite[Proposition 5.2.7]{ip}).
 \end{proof}

Why are people interested in GM varieties? As the rest of this survey will, I hope, show,  for several reasons:
\begin{itemize}
\item they have interesting geometries (Section~\ref{se2});
\item they have intriguing rationality properties in dimension 4 similar to that of cubic fourfolds (most $X_3$ are irrational, all $X_5$ and $X_6$ are rational) (Section~\ref{se3});
\item they have interesting period maps (Section~\ref{se4});
\item they have interesting derived categories (Section~\ref{se5}).
\end{itemize}

\section{Gushel--Mukai varieties,   EPW sextics, and moduli}\label{se2}

 In this section, we explain an intricate (but   elementary) connection (discovered by O'Grady and Iliev--Manivel) between Gushel--Mukai varieties and Lagrangian subspaces in a 20 dimensional symplectic complex vector space.
 
 Let 
 $$ X=\Gr(2,V_5)\cap \P(W_{n+5})\cap Q\subset \P(\bw2V_5)
 $$ 
 be an ordinary\footnote{We make this assumption for the sake of simplicity, but our  constructions also work for special GM varieties, with some modifications (see Remark~\ref{remspecial}).\  The theory also works    in dimensions~$n\in\{1,2\}$, again with some modifications.\ We refer to \cite{dk1} for details.} GM $n$fold, with $n\in\{3,4,5\}$.\ We set
  $$M\coloneqq  \Gr(2,V_5)\cap \P(W_{n+5}),$$ 
  a variety of degree 5 and dimension $n+1$, so that $X=M\cap Q$.
  
  \begin{lemm}\label{l1}
The intersection $M$ is smooth and each nonzero element of the orthogonal complement $W_{n+5}^\perp \subset \bw2{V_5}^\vee$, viewed as a skew-symmetric form on $V_5$, has maximal rank 4.
\end{lemm}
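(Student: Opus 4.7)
\emph{Plan.} Set $L := W_{n+5}^\perp \subset \bw2 V_5^\vee$, a vector space of dimension $5-n$. Since skew forms on the $5$-dimensional space $V_5$ have even rank, a nonzero $\omega \in L$ has rank either $2$ or $4$; my goal is to rule out the rank-$2$ case, after which smoothness of $M$ will fall out of the same local analysis. (The case $n=5$ is vacuous: $L = 0$ and $M = \Gr(2,V_5)$.)

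First I would work out, for a single linear form $\omega \in \bw2 V_5^\vee$ with kernel $K_\omega \subset V_5$, when the hyperplane section $H_\omega := \Gr(2, V_5) \cap \{\omega = 0\}$ is singular. Computing the directional derivative of $\omega|_{\Gr(2,V_5)}$ at $[U]$ along a lift $\tilde\varphi\colon U \to V_5$ of a tangent vector $\varphi \in T_{[U]}\Gr(2,V_5) \simeq \Hom(U, V_5/U)$ gives $\omega(u_1 \wedge \tilde\varphi(u_2)) - \omega(u_2 \wedge \tilde\varphi(u_1))$, which vanishes for every $\tilde\varphi$ exactly when $U \subset K_\omega$. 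Hence $\Sing(H_\omega) = \Gr(2, K_\omega)$, and the same computation applied to the family $L$ yields
\[
\Sing(M) \;=\; \{\,[U] \in M \;\mid\; U \subset K_\omega \text{ for some nonzero } \omega \in L \,\}.
\]
In particular, if every nonzero $\omega \in L$ has rank $4$, then $\dim K_\omega = 1$, no $2$-plane is contained in $K_\omega$, the set above is empty, and $M$ is smooth.

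The core of the argument is thus to show that a rank-$2$ element $\omega_0 \in L$ is incompatible with $X$ being smooth. Let $K_0 := \ker\omega_0$, of dimension $3$; then $\Gr(2, K_0) = \P(\bw2 K_0) \simeq \P^2$ is a linear subvariety of $\P(\bw2 V_5)$. The restriction map $L \to (\bw2 K_0)^\vee$ annihilates $\omega_0$ by construction, so has rank at most $(5-n) - 1 = 4-n$; taking annihilators inside $\bw2 K_0$ yields $\dim(\bw2 K_0 \cap W_{n+5}) \ge 3 - (4-n) = n-1 \ge 2$. Thus
\[
\Gr(2, K_0) \cap M \;=\; \Gr(2, K_0) \cap \P(W_{n+5}) \;=\; \P(\bw2 K_0 \cap W_{n+5})
\]
is a positive-dimensional linear subvariety of $\P(W_{n+5})$, and it lies in $\Sing(M)$ by the characterization above. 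It must meet the hypersurface $Q$—any hypersurface in a projective space meets every positive-dimensional projective subvariety—and at any intersection point $p$ one has $\dim T_p M \ge n+2$, hence $\dim T_p X \ge \dim T_p M - 1 \ge n+1 > n = \dim X$, contradicting smoothness of $X$.

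\emph{Hardest step.} Nothing is deep, but the one calculation requiring genuine care is the tangent-space computation on the Grassmannian that identifies $\Sing(H_\omega)$ with $\Gr(2, K_\omega)$; once that is in place, the remainder reduces to a linear-algebra dimension count in $\bw2 K_0$ together with the standard transversality bound $\dim T_p X \ge \dim T_p M - 1$ and the elementary fact that a quadric in projective space meets every positive-dimensional projective subvariety.
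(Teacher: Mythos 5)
Your proof is correct and follows essentially the same route as the paper's: both identify $\Sing(M)$ with the locus of points $[U]\in\P(W_{n+5})$ with $U$ contained in the kernel of some nonzero form in $W_{n+5}^\perp$ (the paper cites \cite[Corollary~1.6]{pv} where you rederive it by the tangent-space computation), and both reach a contradiction from the fact that a rank-$2$ form forces this locus to be positive-dimensional, hence to meet $Q$ and yield a singular point of $X$. The only cosmetic difference is that you replace the paper's case analysis on $\dim(W_{n+5})\in\{8,9,10\}$ by a uniform dimension count inside $\bw2 K_0$.
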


\begin{proof}
Let $W\coloneqq W_{n+5}$.\ One has   
(see \cite[Corollary~1.6]{pv}) 
\begin{equation}\label{esing}
\Sing(M)=\P(W)\ \cap\hskip-3mm \bigcup_{\omega\in W^\bot\setminus\{0\}}\hskip-4mm \Gr(2,\Ker(\omega )).
\end{equation}
Since $X$ is smooth, $\Sing(M)$ must be  finite.

 If $\dim (W) = 10$, then $M= \Gr(2,V_5)$ is smooth. 
 
If $\dim (W) = 9$, either a generator $\omega$ of~$W^\bot$ has rank 2 and $\Sing(M)$ is a  plane, which is impossible, or else $M$ is smooth. 

If $\dim (W) = 8$, either
  some  $\omega\in W^\bot$ has rank 2, in which case $\Gr(2,\Ker(\omega ))$ is a  plane contained in the hyperplane~$\omega^\bot$, whose intersection 
 with $\P(W)$ therefore contains a line along which $M$ is singular,  which is impossible, or else $M$ is smooth.
\end{proof}

\subsection{GM data sets}\label{s21}

Note that $M$ is cut out in $\P(W)$ by the traces of the Pl\"ucker quadrics
$$\bq(v)(w,w)\coloneqq v\wedge w\wedge w,$$
for $v\in V_5$ and $w\in W\subset \bw2V_5$ (we choose an isomorphism $\bw5V_5\isom\C$).\ More precisely, there is a (not quite canonical) identification
$$V_5\isom H^0(\P(W),\cI_M(2)).$$
One then has 
$$V_6:\isom H^0(\P(W),\cI_X(2))\isom V_5\oplus \C Q.$$

  We formalize this by defining a {\em GM data set} $(W_{n+5},V_6,V_5,\bq)$ (of dimension $n\le5$) by
  \begin{itemize}
\item $V_6$ is a 6 dimensional vector space;
\item $V_5\subset V_6$ is hyperplane;
\item $W_{n+5}\subset \bw2V_5$ is a vector subspace of dimension $n+5$;
\item $\bq\colon V_6\to \Sym^2 W_{n+5}^\vee$ is a linear map such that
$$\forall v\in V_5\ \ \forall w\in W_{n+5}\qquad \bq(v)(w,w)=v\wedge w\wedge w.$$
\end{itemize}
Given such a data set, we define
$$X\coloneqq \bigcap_{v\in V_6} Q(v)\subset \P(W_{n+5}),$$
where $ Q(v)\subset \P(W_{n+5})$ is the quadric hypersurface defined by $\bq(v)$.\ When $X$ is smooth of the expected dimension $n$, it is an ordinary GM variety.

\subsection{Lagrangian data sets}\label{s22}
 
We now set up a  more complicated correspondance.\ We   endow~$\bw3V_6$ with the (conformal) symplectic form given by wedge product.\ A {\em Lagrangian data set} is a triple $(V_6,V_5,A)$, where $V_5\subset V_6$ is a hyperplane and $A\subset \bw3V_6$ is a (10 dimensional) Lagrangian subspace.

\begin{theo}\label{th22}
There is a bijection (explained in the proof) between GM data sets $(W,V_6,V_5,\bq)$  and Lagrangian data sets $(V_6,V_5,A)$.

One has the following   relations
\begin{eqnarray*}
&\dim(W)= 10-\dim (A\cap \bw3V_5),\\
&\forall v\in V_6\moins V_5\qquad \Ker(\bq(v))\isom A\cap (v\wedge \bw2V_6)=A\cap (v\wedge \bw2V_5)\subset W
.
\end{eqnarray*}
\end{theo}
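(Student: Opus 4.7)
The plan is to exploit the fact that the wedge product endows $\bw3V_6$ with a (conformal) symplectic form and that the hyperplane $V_5\subset V_6$, together with a choice of $v_0\in V_6\setminus V_5$, splits $\bw3V_6$ into a direct sum of two Lagrangians. Specifically, the short exact sequence
$$0\to \bw3V_5\to \bw3V_6\to (V_6/V_5)\otimes \bw2V_5\to 0$$
splits once $v_0$ is chosen, yielding $\bw3V_6=\bw3V_5\oplus (v_0\wedge \bw2V_5)$; both summands are $10$-dimensional Lagrangians, and the wedge pairing $\bw3V_5\otimes \bw2V_5\to \bw5V_5\cong \C$ realizes each as the dual of the other.

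First, I would construct the map from Lagrangian data to GM data. Let $\pi\colon \bw3V_6\to \bw2V_5$ denote the projection onto the second summand (postcomposed with $v_0\wedge \bw2V_5\cong \bw2V_5$) and set $W:=\pi(A)$. The identity $\dim W=10-\dim(A\cap \bw3V_5)$ is immediate from $\Ker(\pi|_A)=A\cap \bw3V_5$, and the isotropy of $A$ together with the perfect wedge pairing forces $A\cap \bw3V_5=W^\perp$, which fixes half of $A$. For $w\in W$ I would choose a lift $\tilde w=v_0\wedge w+\alpha(w)\in A$, unique modulo $W^\perp$; the identity $\tilde w_1\wedge \tilde w_2=0$ in $\bw6V_6$ expands in this decomposition to $\alpha_1\wedge w_2=\alpha_2\wedge w_1$ in $\bw5V_5\cong \C$, so $\bq(v_0)(w_1,w_2):=-\alpha_1\wedge w_2$ (the sign is chosen for consistency with linearity, see below) defines a symmetric bilinear form on $W$. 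Imposing the Plücker formula on $V_5$, as forced by the GM axiom, and extending linearly from $V_5\oplus \C v_0=V_6$ yields the required $\bq\colon V_6\to \Sym^2W^\vee$.

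Conversely, starting from a GM data set I would lift the symmetric form $\bq(v_0)\colon W\to W^\vee$ to some $\tilde s\colon W\to \bw3V_5$ via the surjection $\bw3V_5\twoheadrightarrow W^\vee$ induced by the wedge pairing, and set
$$A:=W^\perp\oplus \{v_0\wedge w-\tilde s(w):w\in W\}.$$
The dimension is immediate; isotropy is a short direct computation using $\bw6V_5=0$, $v_0\wedge v_0=0$, and the symmetry of $\bq(v_0)$. I would then check that $A$ is independent of the choice of lift (two lifts of $\bq(v_0)$ differ by an element of $\Hom(W,W^\perp)$, absorbed in the $W^\perp$-summand) and, more delicately, of the choice of $v_0$. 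This last check is the main bookkeeping obstacle: one has to pin down the signs in both $\bq(v_0)$ and $A$ so that replacing $v_0$ by $v_0+v'$ with $v'\in V_5$ produces exactly the Plücker shift required by linearity of $\bq$. The two constructions are then mutual inverses by inspection.

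For the kernel formula, the equality $v\wedge \bw2V_6=v\wedge \bw2V_5$ for $v\in V_6\setminus V_5$ follows from $\bw2V_6=\bw2V_5\oplus (V_5\wedge v)$ together with $v\wedge V_5\wedge v=0$. For the isomorphism $\Ker(\bq(v))\cong A\cap (v\wedge \bw2V_5)$, I would write $v=\lambda v_0+v'$ with $\lambda\ne 0$ and $v'\in V_5$, expand $v\wedge w$ for $w\in \bw2V_5$ in the decomposition $\bw3V_5\oplus (v_0\wedge \bw2V_5)$, and compare to the explicit description of $A$. Matching the $v_0$-component forces $w\in W$ and matching the $\bw3V_5$-component asks that $v'\wedge w+\lambda\tilde s(w)\in W^\perp$; pairing this with an arbitrary $w'\in W$ and using $\bq(v)=\lambda\bq(v_0)+\bq(v')$ together with the Plücker formula for $\bq(v')$ shows the condition is exactly $w\in \Ker(\bq(v))$. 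The map $w\mapsto v\wedge w$ is injective because $v\notin V_5$, so it gives the desired identification.
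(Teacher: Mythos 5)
Your proposal is correct and follows essentially the same route as the paper: the paper encodes your splitting $\bw3V_6=\bw3V_5\oplus(v_0\wedge \bw2V_5)$ via the contraction operators $\lambda_p$ (so that $W=A/(A\cap\bw3V_5)$ and $\bq$ is given by a single formula manifestly linear in $v$), and its definition of $A$ as the kernel of $(\xi,w)\mapsto\bigl(w'\mapsto \xi\wedge w'+\bq(v_0)(w,w')\bigr)$ on $\bw3V_5\oplus W$ is exactly your $W^\perp\oplus\{v_0\wedge w-\tilde s(w)\}$. The only cosmetic differences are that you verify the kernel identity directly for every $v$ whereas the paper checks it at $v_0$ and invokes independence of $A$ from the choice of $v_0$, and that the paper's $\lambda$-formula sidesteps the $v_0$-compatibility bookkeeping you flag in the forward direction.
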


\begin{proof} This construction is explained in great details and more generality in the proof of \cite[Theorem~3.6]{dk1}.\ We present a simplified version.

For one direction, let us start with a Lagrangian data set $(V_6,V_5,A)$.\ Let $\lambda\in V_6^\vee$ be a linear form with kernel $V_5$.\ By the Leibniz rule, it induces $\lambda_p\colon \bw{p}V_6\to \bw{p-1}V_5$, with kernel~$\bw{p}V_5$.\ Define
$\bq\colon V_6\otimes \Sym^2\!A\to \C$ by
$$\forall v\in V_6\ \ \forall \xi,\xi'\in A\qquad \bq(v\otimes (\xi,\xi'))\coloneqq -\lambda_4 (v\wedge \xi)\wedge \lambda_3(\xi')\in \bw5V_5.$$
The fact that $A$ is Lagrangian implies $\xi\wedge\xi'=0$, hence, by the Leibniz rule,
$$0=\lambda_6 (\xi\wedge\xi')=\lambda_3 (\xi)\wedge\xi'-\xi\wedge\lambda_3 (\xi').$$
Therefore, we have, by the Leibniz rule again,
$$\bq(v\otimes (\xi,\xi'))\coloneqq -\lambda_1 (v)\wedge \xi\wedge \lambda_3(\xi')+v\wedge\lambda_3 ( \xi)\wedge \lambda_3(\xi'),
$$
which is symmetric in $\xi$ and $\xi'$.

Moreover, if $\xi'\in A\cap \bw3V_5$, one has $\lambda_3(\xi')=0$, hence $\bq(v\otimes (\xi,\xi'))=0$, so that $\bq$ descends to
$$\bq\colon V_6\otimes \Sym^2 W\to \C,$$
where $W\coloneqq A/(A\cap \bw3V_5)$.\ 

Finally, if $v\in V_5$, one has $\lambda(v)=0$, hence $\bq(v\otimes (\xi,\xi'))=v\wedge\lambda_3 ( \xi)\wedge \lambda_3(\xi')$ by the calculation above.\ The canonical factorization of the composition
\begin{equation*}\label{eqw}
A\subset \bw3V_6\xrightarrow{\ \lambda_3\ }\bw2V_5
\end{equation*}
induces an injection $W\hra \bw2V_5$ and all the conditions are met for  $(W,V_6,V_5,\bq)$ to be a GM data set.
\medskip

For the other direction, we start from a GM data set $(W,V_6,V_5,\bq)$.\ Choose $v_0\in V_6\moins V_5$.\ We define $A$ by the short exact sequence
\begin{equation}\label{eqA}
\begin{array}{ccccccccc}
 0 &  \to & A&\to &\bw3V_5\oplus W&\lra &W^\vee  \\
  &&&& (\xi,w)&\longmapsto&\bigl(w'\mapsto \xi\wedge w'+\bq(v_0)(w,w')
  \bigr)   ,
\end{array}
\end{equation}
where the middle term is seen inside $\bw3V_5\oplus( v_0\wedge \bw2V_5)=\bw3V_6$.

Let us check that $A$ is Lagrangian.\ If $(\xi,w),(\xi',w')\in A$, we have
\begin{eqnarray*}
(\xi,w)\wedge(\xi',w')&=&(\xi+v_0\wedge w)\wedge(\xi'+v_0\wedge w')\\
&=&\xi\wedge v_0\wedge w'+v_0\wedge w\wedge\xi'\\
&=&v_0\wedge(-\xi\wedge w'+\xi'\wedge w).
\end{eqnarray*}
But this vanishes because   $(\xi,w)$ goes to 0 in $W^\vee$, hence $ \xi\wedge w'+\bq(v_0)(w,w')=0$, and similarly $ \xi\wedge w+\bq(v_0)(w',w)=0$.\ So the fact that $A$ is isotropic follows from the symmetry of $\bq(v_0)$.\ Since $A$ has dimension $\ge 10$ by construction, it is Lagrangian (and the rightmost map in \eqref{eqA} is surjective). 

One checks that $A$ is independent of the choice of $v_0$ and that the two constructions are inverse of each other.

The formula for $\dim(W)$ was given in the course of the proof.\ By construction of $A$, the kernel of $\bq(v_0)$ is the set of pairs~$(0,w)$ that belong to $A$, which is isomorphic to $A\cap (v_0\wedge \bw2V_5)$.\ Since $A$ is independent of the choice of $v_0$, this remains valid for all $v_0\in V_6\moins V_5$.
\end{proof}

Putting everything together, we obtain a bijection between the set of isomorphism classes of (possibly singular) ordinary GM varieties and the set of isomorphism classes of  certain Lagrangian data sets $(V_6,V_5,A)$.\ We need to worry about   smoothness of the associated GM variety.\ It turns out, and this is what makes this construction so efficient, that there is a very simple criterion for smoothness that only involves the Lagrangian~$A$ and not the hyperplane $V_5$.

\begin{theo}\label{th23}
Let $n\in\{3,4,5\}$.\ 
The bijections constructed above combine to give a bijection between the set of isomorphism classes of   ordinary GM varieties of dimension $n$ and the set   of isomorphism classes of Lagrangian data sets $(V_6,V_5,A)$ such that 
$\dim (A\cap \bw3V_5)=5-n$ and~$A$ {\em contains no decomposable vectors.}
\end{theo}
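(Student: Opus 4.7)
The plan is to refine Theorem~\ref{th22} by analyzing when the associated GM variety $X \subset \P(W)$ is smooth of the expected dimension $n$. The dimension condition is immediate: $X$ is cut out by the $6$-parameter family of quadrics $\{\bq(v)\}_{v \in V_6}$, so $\dim X = n$ iff $\dim W = n+5$, which by Theorem~\ref{th22} is equivalent to $\dim(A \cap \bw3 V_5) = 5-n$.\ It remains to translate smoothness of $X$ into the condition that $A$ contain no decomposable trivectors.

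First I observe that any $x \in X$ lies in $\Gr(2,V_5)$, since $M = \Gr(2,V_5) \cap \P(W)$ is already the intersection of the subfamily $\{\bq(v)\}_{v \in V_5}$.\ Writing $x = [u_1 \wedge u_2]$ with $u_1,u_2 \in V_5$, the Jacobian criterion gives that $X$ is smooth of dimension $n$ at $x$ iff
\[
K_x := \{v \in V_6 : u_1 \wedge u_2 \in \Ker(\bq(v))\}
\]
has dimension exactly $2$ (the inclusion $\langle u_1,u_2\rangle \subset K_x$ being automatic).\ The decisive step is the identification $K_x = \{v \in V_6 : v \wedge u_1 \wedge u_2 \in A\}$.\ For $v \notin V_5$ this is the formula $\Ker(\bq(v)) \simeq A \cap (v \wedge \bw2 V_5)$ from Theorem~\ref{th22}, while for $v \in V_5$ it rests on the Lagrangian identity
\[
A \cap \bw3 V_5 \;=\; W^\perp \quad\text{inside }\bw3 V_5,
\]
taken with respect to the pairing $\bw3 V_5 \otimes \bw2 V_5 \to \bw5 V_5 \simeq \C$; one inclusion follows by isotropy of $A$ applied to $\xi \in A \cap \bw3 V_5$ paired with an $\eta \in A$ lifting a given $w \in W$ under $\lambda_3$ (the term $\xi \wedge \eta_5$ drops because $\bw6 V_5 = 0$), and equality then comes from both sides having dimension $5-n$.

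With this description of $K_x$, the equivalence is immediate.\ If $A$ contains a nonzero decomposable $\xi = v_1 \wedge v_2 \wedge v_3$ with $\xi \notin \bw3 V_5$, I adjust a basis of $\langle v_1,v_2,v_3\rangle$ so that $v_1 \notin V_5$ and $v_2,v_3 \in V_5$; a direct computation with the formula for $\bq$ then shows $x := [v_2 \wedge v_3] \in X$ and $\dim K_x \geq 3$, so $X$ is singular at $x$.\ If instead $\xi \in A \cap \bw3 V_5$ is decomposable, it corresponds under the Hodge isomorphism $\bw3 V_5 \simeq \bw2 V_5^\vee$ (determined by a volume form) to a skew form of rank $2$ lying in $W^\perp$, contradicting Lemma~\ref{l1}.\ Conversely, if $A$ has no decomposable vectors, then for any $x = [u_1 \wedge u_2] \in X$, any $v \in K_x \setminus \langle u_1,u_2\rangle$ would produce a nonzero decomposable $v \wedge u_1 \wedge u_2 \in A$; hence $\dim K_x = 2$ and $X$ is smooth of dimension $n$ at every point.

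The main obstacle is the Lagrangian identity $A \cap \bw3 V_5 = W^\perp$; it is what fuses the cases $v \in V_5$ and $v \notin V_5$ into a single criterion intrinsic to $A$ and independent of the hyperplane $V_5$, and it converts the Jacobian condition into the clean geometric statement that $\P(A) \cap \Gr(3,V_6) = \emptyset$ in $\P(\bw3 V_6)$.\ Once this identification is in hand, the rest of the argument is a direct combination of Theorem~\ref{th22}, the formula for $\Ker(\bq(v))$, and Lemma~\ref{l1}.
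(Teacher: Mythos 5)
Your proof is correct and follows essentially the same route as the paper's: the kernel formula of Theorem~\ref{th22} together with Lemma~\ref{l1} disposes of decomposables in $\bw3V_5$, while decomposables outside $\bw3V_5$ are identified with singular points of $X$ lying on $M$. Your uniform packaging via $K_x$ and the identity $A\cap\bw3V_5=W^\perp$ merely replaces the paper's tangent-space perturbation argument (replacing $v$ by $v+tv'$ with $v'\in V_5$) in the converse direction; the content is the same.
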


The condition on $A$ means $\P(A)\cap \Gr(3,V_6)=\vide$ inside $\P(\bw3V_6)$.

\begin{proof}
Let $X$ be an ordinary GM variety of dimension $n$.\ Since $X$ is smooth, by Lemma~\ref{l1}, 
  $M= \Gr(2,V_5)\cap \P(W)$ is also smooth and 
$W^\bot$ contains no rank 2 forms, that is, $\P(W^\bot)\cap \Gr(2,V_5^\vee)=\vide$.

If $\xi\in \P(A)\cap \Gr(3,V_5)$, the exact sequence \eqref{eqA} implies that the linear form $w'\mapsto \xi \wedge w'$ is in $W^\perp$; but it has rank 2, and this is absurd.

If $\xi\in A$ is decomposable, it is therefore not in $\Gr(3,V_5)$, so we can write $\xi=v_0\wedge v_1\wedge v_2$ for some $v_0\in V_6\moins V_5$ and $v_1,v_2\in V_5$.\ From the exact sequence \eqref{eqA} defining $A$, we obtain that $v_1\wedge v_2$ is in $W$ and that $(0, v_1\wedge v_2)$ goes to $0$ in $W^\vee$.\ The bivector  $v_1\wedge v_2$ is therefore in the kernel of the quadratic form $\bq(v_0)$.\ In other words, the quadric~$Q(v_0)$ is singular at the point $[v_1\wedge v_2]$ of $\P(W)$.\ But this point is on $M$, hence it is singular on $X=M\cap Q(v_0)$, which is absurd.

All this shows that $A$ contains no decomposable vectors.

\medskip

Assume for the converse that $A$ contains no decomposable vectors.\ What we just showed implies that $M$ is smooth of dimension $n+1$ (because $A\cap \Gr(3,V_5)=\vide$).\ If $X$ is singular at a point $[w]$, where $w\in \Gr(2,V_5) $, this therefore means that $\T_{Q(v),[w]}\supset \T_{M,[w]}$ for all  $v\in V_6\moins V_5$.\ Since $M$ is the intersection of Pl\"ucker quadrics, there exists $v'\in V_5$ such that $\T_{Q(v),[w]}=\T_{Q(v'),[w]}$.\ Replacing~$v$ by some  $v+tv'$, we see
  that there exists $v\in V_6\moins V_5$ for which the quadric~$Q(v)$ is singular at~$[w]$.\ The second relation in Theorem~\ref{th22} implies that $v\wedge w$ is in~$A$, which is absurd since it is decomposable.
\end{proof}

\begin{rema}[Special GM varieties]\label{remspecial}
Recall from Theorem~\ref{theo11}  that a special GM variety $X$ of dimension $n$ is canonically a double cover of a linear section of the Grassmannian $\Gr(2,V_5)$ branched along an ordinary GM variety $X'$ of dimension $n-1$.\ One defines the Lagrangian data set   associated with $X$ as the Lagrangian data set   associated with $X'$ by the procedure described above (see \cite[Section~3]{dk1}).
\end{rema}
  
\subsection{EPW sextics}  Eisenbud--Popescu--Walter (EPW) sextics are special hypersurfaces of degree~6 in $\P(V_6)$   constructed from  Lagrangian subspaces $A \subset \bw3V_6$.\ More generally, given such a Lagrangian $A$, we consider, for any integer $\ell$,  the  locus
\begin{equation}\label{yabot}
Y_A^{\ge \ell}\coloneqq \bigl\{[v]\in\P(V_6) \mid \dim\bigl(A\cap (v \wedge\bw{2}{V_6} )\bigr)\ge \ell\bigr\}
\end{equation}
and endow it with a   scheme structure as in \cite[Section 2]{og1} (see also Section~\ref{se34}).\ We also set $ Y_A^{ \ell}\coloneqq Y_A^{\ge \ell}\smallsetminus Y_A^{\ge \ell+1}$.\ We have inclusions 
\begin{equation*}
\P(V_6) = Y_A^{\ge 0} \supset Y_A^{\ge 1} \supset Y_A^{\ge 2} \supset \cdots
\end{equation*}
and when the scheme $Y_A \coloneqq  Y_A^{\ge 1}$ is not the whole space $\P(V_6)$, it is a sextic hypersurface \mbox{(\cite[(1.8)]{og1})} called an {\em EPW sextic}.\ When $A$ contains no decomposable vectors, the geometry of these loci was described by O'Grady.

\begin{theo}[O'Grady]\label{th24}
Let $A \subset \bw{3}{V_6}$
be a Lagrangian subspace with no decomposable vectors. 
\begin{itemize}
\item[\rm (a)] $Y_A $ is an integral normal sextic hypersurface in $\P(V_6)$;
\item[\rm (b)] $Y_A^{\ge 2} = \Sing(Y_A )$ is an integral normal  Cohen--Macaulay surface of degree $40$;
\item[\rm (c)] $Y_A^{\ge 3} = \Sing(Y_A^{\ge 2})$ is  finite and smooth,  and is empty for  $A$ general;
\item[\rm (d)] $Y_A^{\ge 4} $  is empty.
\end{itemize}
\end{theo}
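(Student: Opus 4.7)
The plan is to realize the loci $Y_A^{\ge \ell}$ as corank-$\ell$ degeneracy loci of an explicit \emph{symmetric} morphism of rank-$10$ vector bundles on $\P(V_6)\simeq \P^5$, and then to combine the standard theory of symmetric determinantal loci with the hypothesis that $A$ contains no decomposable vectors. The whole argument pivots on (d); items (c), (b), (a) then fall out in that order from determinantal bundle machinery.

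First I would set up the map. Consider the rank-$10$ subbundle $\cT\subset \bw3V_6\otimes \cO_{\P(V_6)}$ with fiber $\cT_{[v]}=v\wedge \bw2V_6$; from the Euler sequence one has $\cT\simeq \cO(-1)\otimes \bw2\cQ$, so $c_1(\cT)=-10h+4h=-6h$ and $\det \cT^\vee=\cO(6)$. Because $A$ and each $\cT_{[v]}$ are Lagrangian in $(\bw3V_6,\wedge)$, the symplectic form identifies $(\bw3V_6\otimes\cO)/\cT$ with $\cT^\vee$, and the composite
\[
f_A\colon A\otimes \cO \hra \bw3V_6\otimes\cO \thra \cT^\vee
\]
is a symmetric morphism of rank-$10$ bundles whose corank-$\ell$ locus is precisely $Y_A^{\ge \ell}$; the expected codimensions $\binom{\ell+1}{2}$ of the strata are $1,3,6,10$ for $\ell=1,2,3,4$.

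The main obstacle, and the step that directly uses the geometric hypothesis, is (d). Assume $[v]\in Y_A^{\ge 4}$; the injection $\alpha\mapsto v\wedge \alpha$ identifies $A\cap(v\wedge\bw2V_6)$ with a $4$-dimensional linear subspace $U\subset \bw2(V_6/\langle v\rangle)\simeq \bw2\C^5$. Since $\Gr(2,5)$ has codimension $3$ in $\P(\bw2\C^5)=\P^9$, the projective subspace $\P(U)=\P^3$ must meet it, producing a decomposable bivector $v_1\wedge v_2\in U$ and hence a decomposable vector $v\wedge v_1\wedge v_2\in A$, a contradiction. For (c), the expected codimension $6$ exceeds $5=\dim \P^5$, so $Y_A^{\ge 3}=\emptyset$ for general $A$; when nonempty, the infinitesimal analysis of the symmetric degeneracy locus at $[v]\in Y_A^{\ge 3}$, valid because $Y_A^{\ge 4}=\emptyset$, yields reducedness and hence finiteness and smoothness.

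Items (b) and (a) then follow from general symmetric determinantal theory (J\'ozefiak--Lascoux--Pragacz), (d) having forced each $Y_A^{\ge \ell}$ to have its expected codimension. Cohen--Macaulayness of $Y_A^{\ge 2}$ is standard, and its degree $40$ is the relevant Schur polynomial in $c_\bullet(\cT)$ evaluated on $\P^5$. Normality in (b) and (a) follows from Serre's criterion $R_1+S_2$: the singular locus is the next stratum ($Y_A^{\ge 3}$ respectively $Y_A^{\ge 2}$), which has codimension at least $2$ by the previous parts, and $S_2$ is Cohen--Macaulayness. Finally, $Y_A=\{\det f_A=0\}$ is a Cartier divisor in the class $\cO(6)$; granted that $f_A$ is not identically degenerate (itself a further consequence of the no-decomposable-vector hypothesis, which I expect to require its own short argument), $Y_A$ is a proper sextic hypersurface, and being normal and connected it is integral, while integrality of $Y_A^{\ge 2}$ likewise reduces to a connectedness verification.
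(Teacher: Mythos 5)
The paper does not actually prove this statement: it is quoted from O'Grady's papers ([O1], [O3]--[O5]), and the only related material in the text is the Lagrangian degeneracy-locus formalism of the subsection on double EPW sextics. That said, your strategy --- realize $Y_A^{\ge \ell}$ as the corank-$\ell$ loci of the pairing between the two Lagrangian subbundles with fibers $A$ and $v\wedge\bw2V_6$, prove (d) by an incidence argument, and then invoke the theory of symmetric determinantal schemes --- is indeed the architecture of O'Grady's proof, and your argument for (d) is complete and correct: a $\P^3$ inside $\P\bigl(\bw2(V_6/\C v)\bigr)=\P^9$ must meet the codimension-$3$ Grassmannian $\Gr(2,V_6/\C v)$, which yields a decomposable vector of $A$.

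The genuine gap is the passage from (d) to (a)--(c). The determinantal formalism only gives \emph{upper} bounds $\codim Y_A^{\ge\ell}\le\binom{\ell+1}{2}$ on every nonempty component; the statements you actually need --- $Y_A\ne\P(V_6)$, $\dim Y_A^{\ge2}=2$, $\dim Y_A^{\ge3}\le 0$, $\Sing(Y_A)=Y_A^{\ge2}$, $\Sing(Y_A^{\ge2})=Y_A^{\ge3}$, and reducedness of $Y_A^{\ge3}$ --- are \emph{lower} bounds on codimension together with first-order information, and none of them follows from $Y_A^{\ge4}=\vide$. (For $\ell=3$ the incidence count $2+6<9$ gives nothing, so the trick used for (d) does not propagate downward; and ``reduced'' would not imply ``finite'' in any case.) What is missing is the computation of the differential of the locally defined symmetric $\ell\times\ell$ matrix at a point $[v]$ of corank exactly $\ell$: it is a map $V_6/\C v\to \Sym^2\bigl(A\cap(v\wedge\bw2V_6)\bigr)^\vee$ whose failure to have the required rank is again detected by decomposable vectors of $A$. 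This one computation is the engine behind the smoothness of $Y_A$ along $Y_A^{1}$, of $Y_A^{\ge2}$ along $Y_A^{2}$, and the finiteness and smoothness of $Y_A^{\ge3}$, and it is where the hypothesis on $A$ does most of its work --- not only in (d). Finally, integrality of the surface $Y_A^{\ge2}$ is not a routine ``connectedness verification'': it requires a connectedness theorem for symmetric/Lagrangian degeneracy loci or a monodromy argument, and is one of the harder points of O'Grady's papers. Your reductions of normality to $R_1+S_2$ and of the degree $40$ to a Schur-polynomial evaluation are fine once these inputs are supplied.
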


Given an ordinary  GM variety $X$, with associated Lagrangian $A$, Iliev and Manivel described in \cite{im} a direct way to construct
 its associated EPW sextic $Y_A$.\ It goes as follows.\ The space 
$V_6 $ of quadrics  in $\P(W)$ containing~$X$ is $6$ dimensional.\ Define the  discriminant locus $\widetilde\Dis(X)$ as  
the subscheme of~$\P(V_6)$ of {\em singular} quadrics containing~$X$.\ It is  
a hypersurface of degree $\dim(W) = n + 5$ in which the multiplicity  
of the hyperplane $\P(V_5)$ of  Pl\"ucker quadrics is  {at least} the corank of a general such quadric, 
which is at least $\dim(W)-6=n-1$.\ We set
\begin{equation}
\Dis(X) \coloneqq   \widetilde\Dis(X) -(n-1) \P(V_5).  
\end{equation}
This is an effective divisor of degree $6$   in $\P(V_6)$.

\begin{prop} 
Let $X$ be an ordinary GM variety of dimension $n\in\{3,4,5\}$ with associated Lagrangian $A$.\
The subschemes $Y_{A}$ and $\Dis(X)$ of $\P(V_6)$ are  equal.
\end{prop}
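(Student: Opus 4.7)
The plan is to factor the defining polynomial of $\widetilde\Dis(X)$ and match it against the defining equation of $Y_A$.\ Let $\ell\in V_6^\vee$ cut out $V_5\subset V_6$, and let $s_A\in\Sym^6 V_6^\vee$ be the defining equation of $Y_A$, which is irreducible by Theorem~\ref{th24}(a).

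The key input is Theorem~\ref{th22}: for any $[v]\in U:=\P(V_6)\setminus\P(V_5)$ it supplies a canonical isomorphism $\Ker(\bq(v))\cong A\cap(v\wedge\bw{2}V_6)$, so $\bq(v)$ is degenerate if and only if $[v]\in Y_A$.\ Hence $Y_A\cap U$ equals $\widetilde\Dis(X)\cap U$ set-theoretically; since $Y_A$ is integral of dimension $5$, it is not contained in the hyperplane $\P(V_5)$, so $Y_A=\overline{Y_A\cap U}\subseteq\widetilde\Dis(X)$ as closed subsets of $\P(V_6)$.\ Translating into polynomial divisibility, since $s_A$ is an irreducible generator of $I(Y_A)$, we obtain $s_A\mid\det\bq$ in $\Sym^\bullet V_6^\vee$.

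Next, I bound the multiplicity of $\ell$ in $\det\bq$ from below.\ For nonzero $v\in V_5$, the Pl\"ucker quadric $\bq(v)$ on $\bw{2}V_5$ has kernel $v\wedge V_5$, of dimension~$4$, so its restriction $\bq(v)|_W$ has corank at least $\dim(W\cap(v\wedge V_5))\ge(n+5)+4-10=n-1$.\ Therefore $\ell^{n-1}\mid\det\bq$, and we may write $\det\bq=\ell^m\cdot g$ for some $m\ge n-1$ and some $g\in\Sym^{n+5-m}V_6^\vee$ with $\ell\nmid g$.

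Combining the two: the divisibility $s_A\mid\ell^m g$, together with $\ell\nmid s_A$ and the irreducibility of $s_A$, forces $s_A\mid g$, hence $\deg g\ge 6$, i.e., $m\le n-1$.\ Together with $m\ge n-1$ we conclude $m=n-1$ and $\deg g=6$, so $g=c\cdot s_A$ for some $c\in\C^*$.\ This gives $\widetilde\Dis(X)=(n-1)\P(V_5)+Y_A$ as Cartier divisors on $\P(V_6)$, whence $\Dis(X)=\widetilde\Dis(X)-(n-1)\P(V_5)=Y_A$, as claimed.\ The only delicate point is the first step, namely the kernel identification provided by Theorem~\ref{th22}; everything else is linear algebra and a degree count, with the integrality of $Y_A$ (Theorem~\ref{th24}(a)) doing the decisive work at the end.
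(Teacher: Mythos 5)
Your proof is correct and follows essentially the same route as the paper: the kernel identification from Theorem~\ref{th22} gives set-theoretic agreement of $Y_A$ and $\widetilde\Dis(X)$ off $\P(V_5)$, and then the integrality of $Y_A$ plus a degree count finishes the argument. Your divisibility bookkeeping is just a more explicit version of the paper's ``both are sextics and $Y_A$ is integral'' step, with the small bonus that it shows the multiplicity of $\P(V_5)$ in $\widetilde\Dis(X)$ is exactly $n-1$.
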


\begin{proof}
For all $v\in V_6\moins V_5$, we have $\Ker (\bq(v)) = A \cap (v \wedge \bw2{V_5})$ by Theorem~\ref{th22} hence, 
 $Y_A$ and $\Dis(X)$ coincide as sets on the complement of $\P(V_5)$.\ Since they are both sextics and $Y_A$ is integral (Theorem~\ref{th24}(a)) and not contained (as a set) in $\P(V_5)$, they are equal.
\end{proof}

\subsection{Dual EPW sextics}\label{s24} If $A\subset \bw3V_6$ is a Lagrangian subspace, so is its orthogonal $A^\bot 
 \subset \bw3V_6^\vee$ (which we also call its {\em dual}) and $A$ contains decomposable vectors if and only if $A^\bot$ does.\ The EPW loci~\eqref{yabot} for~$A^\bot$ can be described  {in terms of $A$} as
\begin{equation}\label{dualYA}
Y_{A^\perp}^{\ge \ell} = \big\{ [V_5] \in \Gr(5,V_6)=\P(V_6^\vee) \mid \dim (A \cap \bw{3}{V_5}) \ge \ell \big\}
\end{equation}
and the sextic hypersurface $Y_{A^\perp}\subset \P(V_6^\vee)$ is the projective dual of the hypersurface 
$Y_{A}\subset \P(V_6)$.

In particular, for each $n\in\{3,4,5\}$, we can rewrite the bijection of Theorem~\ref{th23} as a bijection between   the set of isomorphism classes of   ordinary GM varieties of dimension~$n$ and the set of isomorphism classes of Lagrangian data sets $(V_6,V_5,A)$ such that  $A$  contains no decomposable vectors and $[V_5]\in Y_{A^\perp}^{5-n} 
$.

\subsection{Moduli}\label{se25}
A GIT moduli space for EPW sextics was constructed by O'Grady in \cite{og5}: consider the natural action of the group $\PGL(V_6)$ on the Lagrangian Grassmannian~$\LGr(\bw3V_6)$
and its natural linearization in the line bundle $\cO(2)$ (the line bundle $\cO(1)$ has no such linearizations).\ The GIT quotient
\begin{equation*}
\bEPW  \coloneqq \LGr(\bw3V_6)\gquot\PGL(V_6)
\end{equation*}
is a  projective, irreducible, 20 dimensional, coarse moduli space for   EPW sextics.\
The hypersurface 
\begin{equation*}
\Sigma \coloneqq  \{ [A] \in \LGr(\bw3V_6)\mid \text{$A$ has decomposable vectors} \}
\end{equation*}
is $\PGL(V_6)$-invariant.\ Consider its complement: it
is affine, consists of stable points (\cite[Corollary~2.5.1]{og5}), and its image $\EPW $ in $ \bEPW$ is  affine and  is a coarse moduli space for EPW sextics~$Y_A$ such that $A$ has  no decomposable vectors.

The duality map $A\mapsto A^\bot$ described in Section~\ref{s24} induces a nontrivial involution $r$ on~$\bEPW$ that satisfies $r(\Sigma)=\Sigma$, hence also an involution on $\EPW$.

We do not know how to construct directly a moduli space for GM varieties.\ Instead, we extend the constructions of Section~\ref{s22} to families of GM varieties (this is not an easy task, not least because we need to include special GM varieties).\ Here is the main result of \cite{dk3}.

\begin{theo}\label{th26}
Let    $n\in\{3,4,5,6\}$.\
There is a quasiprojective, irreducible, coarse moduli space $\bcM_n$ for  GM varieties of dimension $n$.\ Its dimension is $25 - (5-n)(6-n)/2$, there is a surjective morphism
$$\pi_n\colon \bcM_n\lra \EPW,$$
and the fiber $\pi_n^{-1}([A])$ of the class of a Lagrangian $A$ with no decomposable vectors is isomorphic to $Y_{A^\perp}^{5-n} \cup Y_{A^\perp}^{6-n}$.\footnote{Modulo the finite group of linear automorphisms of $V_6$ that preserve $A^\bot$, which is trivial for $A$ very general (\cite[Proposition~B.9]{dk1}).}
\end{theo}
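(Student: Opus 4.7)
The plan is to upgrade the correspondence of Theorems~\ref{th22}--\ref{th23} to a family version and then take a GIT quotient. Fix a six-dimensional vector space $V_6$ and write $\LGr := \LGr(\bw3 V_6)$. Define $\cZ_n \subset \LGr\times \P(V_6^\vee)$ to be the locally closed subscheme of pairs $([A],[V_5])$ such that $A$ contains no decomposable vectors and $\dim(A\cap \bw3 V_5)\in\{5-n,\,6-n\}$. Over the open stratum $\cZ_n^\circ$ where $\dim(A\cap\bw3 V_5)=5-n$, the relative form of the first construction in the proof of Theorem~\ref{th22} produces a tautological rank-$(n+5)$ subbundle $\cW\subset \bw2 V_5\otimes \cO_{\cZ_n^\circ}$ together with a tautological quadric morphism $V_6\otimes \cO_{\cZ_n^\circ}\to \Sym^2 \cW^\vee$, whose relative base locus in $\P(\cW)$ is, by Theorem~\ref{th23}, a smooth family of ordinary GM $n$-folds. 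On the complementary stratum (the only one present when $n=6$), a parallel construction, supplemented with the extra line-bundle datum needed to reconstruct the branched double cover, yields a smooth family of special GM $n$-folds.

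The group $\PGL(V_6)$ acts naturally on $\cZ_n$ through its diagonal action on the two factors. By \cite[Corollary~2.5.1]{og5} the locus $\LGr\setminus\Sigma$ lies in the stable locus of the $\cO(2)$-linearization, so the pulled-back linearization on $\cZ_n$ is also stable. Setting
\[
\bcM_n := \cZ_n \gquot \PGL(V_6),
\]
we obtain a quasiprojective irreducible variety whose closed points are in bijection with isomorphism classes of smooth GM $n$-folds (by Theorem~\ref{th23} and its special analog), and the relative GM family on $\cZ_n$ descends to endow $\bcM_n$ with its coarse-moduli property. The $\PGL(V_6)$-equivariant projection $\cZ_n\to \LGr\setminus\Sigma$ descends to the desired morphism $\pi_n\colon \bcM_n\to \EPW$, and by construction its fibre over a point $[A]$ with trivial stabilizer is the locus of $[V_5]\in \P(V_6^\vee)$ satisfying one of the two dimension conditions, which by \eqref{dualYA} is exactly $Y_{A^\perp}^{5-n}\cup Y_{A^\perp}^{6-n}$.

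For the dimension, the scheme $Y_{A^\perp}^{\geq\ell}$ has expected codimension $\binom{\ell+1}{2}$ in $\P(V_6^\vee)$, realized as actual codimension for general $A$ by Theorem~\ref{th24} applied to $A^\perp$. Taking $\ell=5-n$ yields $\dim Y_{A^\perp}^{5-n} = 5-(5-n)(6-n)/2$, which together with $\dim \EPW=20$ gives $\dim \bcM_n = 25-(5-n)(6-n)/2$. Surjectivity of $\pi_n$ amounts to non-emptiness of the fibre for every $A$ without decomposable vectors, which follows from Theorem~\ref{th24} applied to $A^\perp$ (an EPW sextic of the expected dimension always meets the stratum in question, since the jump locus $Y^{\geq 4}$ remains empty by part (d) and the preceding strata are all non-empty).

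The genuinely hard step is the preparation that precedes the GIT quotient: one needs a notion of \emph{family of GM varieties} that uniformly accommodates the ordinary and special types, together with a relative version of the Lagrangian/GM correspondence of Section~\ref{s22} that commutes with arbitrary base change, handles jumps between the two strata inside a single family, and identifies the automorphism scheme of a GM variety with the stabilizer of its Lagrangian datum (modulo the subgroup preserving the extra data in the special case). This relative framework is exactly the content of \cite{dk3}; once it is in place, the GIT descent above delivers $\bcM_n$ with the asserted properties.
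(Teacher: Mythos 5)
Your outline follows essentially the same route as the paper, which itself offers no proof of Theorem~\ref{th26}: it only remarks that one must extend the Lagrangian-data correspondence of Section~\ref{s22} to families of GM varieties (the delicate point being the inclusion of special GM varieties) and refers entirely to \cite{dk3} for the construction. Your parameter space $\cZ_n\subset\LGr(\bw3V_6)\times\P(V_6^\vee)$, the fibre identification via \eqref{dualYA}, the dimension count from Theorem~\ref{th24} applied to $A^\perp$, and your explicit deferral of the relative/functorial framework to \cite{dk3} all match that strategy (though the assertion that stability of $\LGr\setminus\Sigma$ simply ``pulls back'' to a linearization on the product would need the more careful treatment that \cite{dk3} actually supplies).
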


The piece $Y_{A^\perp}^{5-n}$ of the fiber corresponds to ordinary GM $n$folds and the piece~$Y_{A^\perp}^{6-n}$     to special   GM $n$folds.\ Also, the schemes $\bcM_6$ and $\bcM_5$ are affine\footnote{It should be remarked that $\bcM_6$ can  be directly constructed  as a standard GIT quotient of an affine space (\cite[Proposition~5.16]{dk3}).} and there is an open embedding $\bcM_6\hra \bcM_5$ whose image is the moduli space for ordinary GM fivefolds.

\section{Birationalities and rationality}\label{se3}
In this section, we will show that all GM varieties of the same dimension $n\in\{3,4,5,6\}$  with the same associated Lagrangian (that is, in the same fiber of the map $\pi_n$ defined in Theorem~\ref{th26}), or with dual associated Lagrangians, are birationally isomorphic.\ Since  all GM varieties of dimensions 5 or 6 are  rational (see Proposition~\ref{p33}), this is meaningful only \mbox{when $n\in\{3,4\}$.} 

Let $X$ be a GM variety of dimension $n\in\{3,4,5\}$, which we assume to be ordinary for simplicity, and let $\cU_X$ be the restriction to $X$   of the tautological rank 2 vector bundle on $\Gr(2,V_5)$.\ There are canonical maps
\begin{equation}\label{rho}
\vcenter
{\xymatrix@R=5mm{
  & \P(\cU_X)  \ar[dr]^{\rho} \ar[dl] & \\
  X&&\P(V_5).}
}
\end{equation}
Let 
\begin{equation*} 
  \Sigma_1(X) \subset \P(V_5)
\end{equation*}
be the union of  the kernels of all nonzero elements of  $W^\perp$, seen as rank 4 skew-symmetric forms on $V_5$ (see Lemma~\ref{l1}).\ It is
\begin{itemize}
\item empty for $n=5$ ($W^\bot=0$);
\item a point  for $n=4$;
\item a smooth conic for $n=3$.
\end{itemize}

\begin{prop}\label{prop31}
Let $X$ be an ordinary GM variety of dimension~$n\in\{3,4,5\}$.\ The map~$\rho$ is surjective and its fiber over a point $[v] \in\P(V_5)$ is
\begin{itemize}
\item if  $[v] \notin\Sigma_1(X)$,   a quadric in a $\P^{n-2}$, whose  corank is $k$ if and only if $[v] \in Y^k_A   $;
\item if $[v] \in\Sigma_1(X)$, a quadric in a  $\P^{n-1}$, whose  corank is $k$ if and only if  $[v] \in Y^{k+1}_A  $.\ In particular, $\Sigma_1(X)\subset Y_A$.
\end{itemize}
%
\end{prop}

\begin{proof}
Choose  $v_0 \in V_6 \setminus V_5$ and let $v \in V_5\moins\{0\}$.\
The fiber $\rho^{-1}([v])$ is the set of  $V_2\subset V_5$ containing~$v$ which correspond to points of $X$.\ This is the intersection $Q'(v)$ of the nonPl\"ucker quadric $Q(v_0) \subset \P(W)$ defining $X$  
with the subspace $\P\bigl( (v \wedge V_5) \cap W\bigr) \subset \P(W)$.

If $[v]\notin \Sigma_1(X)$, it is not in the kernel of any 2 form defining $W$, hence $\P\bigl( (v \wedge V_5) \cap W\bigr)$ has codimension $5-n$ in $\P (v \wedge V_5)\isom \P^3$: it is a linear space of dimension $n-2$.

If $[v]\in \Sigma_1(X)$, it is  in the kernel of a unique 2 form defining $W$, hence $\P\bigl( (v \wedge V_5) \cap W\bigr)$ has codimension $4-n$ in $\P (v \wedge V_5)\isom \P^3$: it is a linear space of dimension $n-1$.

The statements about the rank of $Q'(v)$ are not so obvious and we skip their proofs (see~\cite[Proposition~4.5]{dk1}).
 \end{proof}

\subsection{Birationalities} 
Our main result is the following (\cite[Corollary~4.16 and Theorem~4.20]{dk1}).

\begin{theo}\label{th32}
GM varieties of the same dimension $n\ge 3$ whose associated Lagrangians are either the   same or dual one to another are birationally isomorphic.
\end{theo}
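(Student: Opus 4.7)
My main tool is the quadric fibration $\rho\colon\P(\cU_X)\to\P(V_5)$ of Proposition~\ref{prop31}, whose fibers can be read off intrinsically from the Lagrangian data by the second formula of Theorem~\ref{th22}: the corank of the fiber quadric over $[v]\in\P(V_5)$ equals $\dim\bigl(A\cap(v\wedge\bw{2}V_5)\bigr)$. Since $\P(\cU_X)$ is a $\P^1$-bundle over $X$, to establish a birational isomorphism $X\dra X'$ it suffices to produce a birational isomorphism $\P(\cU_X)\dra\P(\cU_{X'})$ respecting the $\P^1$-bundle structures generically, for instance by transporting a rational section. The strategy is then to identify $\P(\cU_X)$ and $\P(\cU_{X'})$ on a common open subvariety built from $A$ alone.

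\textbf{Same Lagrangian.} Fix $A$ with no decomposable vectors and let $(V_5,A)$, $(V_5',A)$ yield smooth ordinary GM varieties $X,X'$ of dimension $n\in\{3,4\}$ (the cases $n\in\{5,6\}$ being trivial by Proposition~\ref{p33}). Set $V_4:=V_5\cap V_5'$. I would construct a canonical identification between the restrictions $\rho^{-1}(\P(V_4))$ and $\rho'^{-1}(\P(V_4))$, realizing both as a single quadric fibration on $\P(V_4)$ intrinsic to $A$. The key is to rewrite the fiber over $[v]\in\P(V_4)$ using the Lagrangian description of Theorem~\ref{th22}: the kernel and quadric structure can be expressed entirely in terms of $A\cap(v\wedge\bw{2}V_6)$ and the projection $\bw{2}V_6\to\bw{2}V_6/(v\wedge V_4)$, without reference to the chosen hyperplane. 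Since $\P(V_4)$ is a divisor in each of $\P(V_5),\P(V_5')$, this identifies codimension-one subvarieties of $\P(\cU_X)$ and $\P(\cU_{X'})$; combined with the $\P^1$-bundle structures one produces a birational map $X\dra X'$ by extending the identification to a Zariski neighborhood and transporting a rational section.

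\textbf{Dual Lagrangian.} For $X$ associated to $(V_5,A)$ and $X'$ associated to $(V_5',A^\bot)$ of the same dimension $n\in\{3,4\}$, I would exploit the duality of Section~\ref{s24}: a GM variety for $A$ is parametrized by a hyperplane $[V_5]\in Y^{5-n}_{A^\bot}\subset\P(V_6^\vee)$, while a GM variety for $A^\bot$ is parametrized by a point $[v_0]\in Y^{5-n}_A\subset\P(V_6)$. The symplectic form on $\bw{3}V_6$ gives $(A^\bot)^\bot=A$, and out of the incidence datum $(v_0,V_5)\in\P(V_6)\times\P(V_6^\vee)$ with $v_0\in V_5$ one obtains two symmetric quadric fibrations whose common birational model gives the correspondence. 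Combined with the same-Lagrangian case (applied once on the $A$ side and once on the $A^\bot$ side), this yields the general statement for dual Lagrangians.

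\textbf{Main obstacle.} The hardest point is precisely the descent from a birationality of quadric fibrations to a birationality of the GM varieties themselves: the bundles $\P(\cU_X)$ and $\P(\cU_{X'})$ sit over different bases and have no a priori common rational section, so producing an explicit birational map $X\dra X'$ requires a careful analysis of such sections (or an explicit geometric construction bypassing the fibration entirely, such as projection from a conic on $X$ for $n=3$, or from a special Schubert plane for $n=4$). Verifying that the abstract identification of the quadric fibrations truly extends to the total spaces of $\P(\cU_X)$ and $\P(\cU_{X'})$, and not only to their base quadric bundles, is where the bulk of the technical work lies.
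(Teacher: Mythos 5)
Your starting point is the right one---the fibration $\rho\colon\P(\cU_X)\to\P(V_5)$ restricted over $\P(V_4)$ with $V_4=V_5\cap V_5'$ is exactly what the paper uses---but you have misplaced the difficulty and left the genuinely hard step unjustified. The ``main obstacle'' you identify (descending from a birationality of fibrations to one of the GM varieties, finding a common rational section of the $\P^1$-bundles, extending an identification of divisors to a Zariski neighborhood) is a non-issue: the fiber of $\P(\cU_X)\to X$ over a point $[V_2]$ is the line $\P(V_2)\subset\P(V_5)$, which meets the hyperplane $\P(V_4)$ in a single point whenever $V_2\not\subset V_4$, so the projection $\eps\colon\widetilde X:=\rho^{-1}(\P(V_4))\to X$ is already birational. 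There is nothing to descend and nothing to extend: $\widetilde X\to\P(V_4)$ is itself a birational model of $X$ fibered in quadrics over $\P(V_4)$, and likewise for $X'$; one only has to compare these two fibrations over the common base.

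That comparison is the step you dispatch with ``a canonical identification\ldots intrinsic to $A$,'' and as stated it has a gap. For $n=3$ the fibers over $\P(V_4)$ are $0$-dimensional quadrics, so $\widetilde X\to\P(V_4)$ is a double cover of $\P^3$ branched along the sextic $Y_A\cap\P(V_4)$; since this branch divisor depends only on $A$ and $V_4$, the two covers agree and the threefold case closes. But for $n=4$ the fibers are conics, and a conic bundle over $\P^3$ is \emph{not} determined birationally over the base by its discriminant divisor alone: one must also match the associated double covers of the discriminant (equivalently, the Brauer classes). Knowing from Theorem~\ref{th22} and Proposition~\ref{prop31} that the coranks of corresponding fibers agree does not give this. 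The paper identifies both discriminant covers with the restriction of the canonical double cover $\tY_A\to Y_A$ and then invokes a Brauer-group argument (Colliot-Th\'el\`ene) to conclude that the two function fields coincide as extensions of that of $\P(V_4)$. Finally, your dual-Lagrangian paragraph records the right incidence datum $(v_0,V_5)$ but contains no actual comparison of the two fibrations; the explicit ``line/conic transformations'' you mention only as a fallback are in fact how that case is treated.
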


\begin{proof}
Because of Proposition~\ref{p33}, the only interesting cases are when $n\in\{3,4\}$.\ We will only sketch the proof in the case when  $X$ and $X'$ are   ordinary GM threefolds associated with the same Lagrangian subspace $A$ and different hyperplanes $[V_5], [V'_5]\in Y_{A^\bot}^2$ (the other cases are more difficult).\ Set $V_4\coloneqq V_5\cap V'_5$ and restrict the diagrams
\begin{equation*}
\xymatrix@R=5mm{
  & \P(\cU_X)  \ar[dr]^-{\rho} \ar[dl] & \\
  X&&\P(V_5)
}
\qquad
\xymatrix@R=5mm{
  & \P(\cU_{X'})  \ar[dr]^-{\rho'} \ar[dl] & \\
  X'&&\P(V'_5)
}
\end{equation*}
to
\begin{equation*}
\xymatrix@R=5mm{
  & \widetilde X  \ar[dr]^-{\tilde\rho} \ar[dl]_-{\eps} &&\widetilde X'\ar[dl]_-{\tilde\rho'} \ar[dr]^-{\eps'}  \\
  X&&\P(V_4)&&X',
}
\end{equation*}
where $\widetilde X\coloneqq  \rho^{-1}(\P(V_4))$ and $ \widetilde X' \coloneqq  \rho^{\prime-1}(\P(V_4))$.\ One can show that $\eps$ and $\eps'$ are birational maps.\ Moreover, by Proposition~\ref{prop31}, $\tilde\rho$ is, outside of $(\Sigma_1(X)\cup Y^{\ge 2}_A)\cap \P(V_4)$,\footnote{By \cite[Lemma~B.6]{dk1}, this set has dimension $\le 1$.} a double cover branched along the sextic surface $Y_A\cap \P(V_4)$.\ It follows that above the complement of  $(\Sigma_1(X)\cup \Sigma_1(X')\cup \cup Y^{\ge 2}_A)\cap \P(V_4)$, the morphisms $\tilde\rho$ and $\tilde\rho'$ are double covers branched along {\em the same} sextic surface.\ The schemes~$\widetilde X$ and  $ \widetilde X'$ are therefore birationally isomorphic, and so are the varieties~$X$ and~$X'$.

A similar proof works in case $X$ and $X'$ are   ordinary GM fourfolds associated with the same Lagrangian subspace $A$ and different hyperplanes $[V_5], [V'_5]\in Y_{A^\bot}^1$: with the same notation,~$\tilde\rho$ and~$\tilde\rho'$ are, outside a subvariety of $\P(V_4)$ of dimension $\le 1$,  conic bundles with the same discriminant $Y_A\cap \P(V_4)$.\ Over a general point of every component of that surface, the fiber is the union of two distinct lines, and the double cover of $Y_A\cap \P(V_4)$ that this defines is the same for $X$ and for $X'$: one can show it is induced by restriction of the canonical double cover $\tY_A\to Y_A$.\ A general argument based on Brauer groups (see~\cite{ct}) then shows that the   fields of functions of~$\widetilde X$ and $\widetilde X'$ are the same extension of the field of functions of $\P(V_4)$.\ Therefore, $\widetilde X$ and $\widetilde X'$ are birationally isomorphic, and so are $X$ and $X'$.
\end{proof}

In \cite[Section~7]{dim}, we describe explicit birational maps between ordinary GM threefolds associated with the same Lagrangian (``conic transformations'') or with dual Lagrangians (``line transformations'').

\subsection{Rationality}

We first consider GM varieties of dimension $n\in\{5,6\}$.

\begin{prop}\label{p33}
All GM varieties of dimension $n\in\{5,6\}$ are rational.
\end{prop}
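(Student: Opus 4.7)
The plan is to apply the incidence diagram~\eqref{rho} and Proposition~\ref{prop31}, which describes $\rho\colon\P(\cU_X)\to\P(V_5)=\P^4$.

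For $n=5$ (ordinary case $X = \Gr(2,V_5)\cap Q$; the special case is analogous): we have $W=\bw2 V_5$, so $W^\perp=0$ and $\Sigma_1(X)=\vide$.  By Proposition~\ref{prop31}, $\rho$ is then a quadric-surface bundle over $\P^4$.  The first goal is to produce a rational section of $\rho$, for instance by restricting to a generic sub-Grassmannian slice $\Gr(2,V_4)\cap Q$---a smooth intersection of two quadrics in $\P^5$, which is a classical rational Fano threefold---and transporting a rational parametrization fiberwise over the $\P^4$ of hyperplanes $V_4\subset V_5$.  Projecting fiberwise from this section birationally identifies $\P(\cU_X)$ with $\P^4\times\P^2\sim\P^6$.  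Since $\pi\colon\P(\cU_X)\to X$ is a $\P^1$-bundle, this yields $X\times\P^1\sim\P^6$, i.e., stable rationality of $X$.

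For $n=6$: $X$ is a double cover of $\Gr(2,V_5)$ branched along an ordinary GM fivefold $X_5$.  Pulling back the diagram for $\Gr(2,V_5)$ to $X$, the projection $\P(\cU_X)\to\P^4$ becomes a double cover of the $\P^3$-bundle $\P(\cU)\to\P^4$, branched along the quadric-surface bundle $\P(\cU_{X_5})\to\P^4$ (from the $n=5$ case).  Each fiber of $\P(\cU_X)\to\P^4$ is thus a double cover of $\P^3$ branched along a smooth quadric surface, i.e., a smooth quadric threefold in $\P^4$.  A rational section of this quadric-threefold bundle exists (smooth quadric threefolds always contain a rational point), giving $\P(\cU_X)\sim\P^4\times\P^3\sim\P^7$ and in turn $X\times\P^1\sim\P^7$, again yielding stable rationality.

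The main obstacle in both cases is the passage from stable rationality to outright rationality, which does not follow formally in these dimensions.  To complete the argument, I would build an explicit birational map $X\dashrightarrow\P^n$: for $n=5$, by projecting $X\subset\P^9$ from an $\alpha$-plane $\P^3=\{V_2\supset V_1\}\subset\Gr(2,V_5)$, which meets $X$ in a rational quadric surface, and verifying that the resulting rational map to $\P^5$ has degree one; for $n=6$, by using the double-cover presentation together with the rationality of the branch locus $X_5$ (from the $n=5$ case) to construct a rational section of $X\to\Gr(2,V_5)$ and then transfer rationality from $\Gr(2,V_5)$ to~$X$.
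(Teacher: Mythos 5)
Your proposal has several genuine gaps. First, the passage through $\P(\cU_X)$ can at best give stable rationality of $X$ (as you note), and neither of your two attempts to upgrade this is sound. Second, and more seriously, the rational sections you need do not obviously exist: the generic fiber of $\rho$ is a quadric surface over the field $\C(\P^4)$, which is a $C_4$ field, so Tsen--Lang gives nothing, and the assertion that ``smooth quadric threefolds always contain a rational point'' is simply false over a non-algebraically-closed field such as $\C(\P^4)$ (anisotropic quadrics exist over function fields). Your proposed section for $n=5$ via a slice $\Gr(2,V_4)\cap Q$ also cannot work as stated: that slice consists of the $V_2$ contained in a fixed $V_4$, so it meets the fiber $\rho^{-1}([v])$ (the $V_2$ containing a given $v$) only when $v\in V_4$, and hence misses the generic fiber entirely. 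Third, for $n=6$ an irreducible degree-$2$ cover $X\to\Gr(2,V_5)$ admits \emph{no} rational section: a section would be dominant onto the irreducible $X$ and would force the cover to be birational. So the ``transfer of rationality'' step for sixfolds cannot be repaired along these lines, and the explicit projection from an $\alpha$-plane for $n=5$ is left unverified.

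The paper's argument is entirely different and sidesteps all of these issues. For an ordinary fivefold one chooses $[v]\in Y_A^2\setminus\P(V_5)$ (possible because the surface $Y_A^{\ge 2}$ is nondegenerate), so that $X=\Gr(2,V_5)\cap Q(v)$ with $Q(v)$ of corank $2$; a general $6$-dimensional $\bq(v)$-isotropic subspace $I$ cuts $X$ in a smooth quintic del Pezzo surface $S$, and the linear projection from $\P(I)$ exhibits $X$ itself (not $X\times\P^1$) as a fibration in quintic del Pezzo surfaces over $\P^3$. One then concludes with the theorem of Enriques (see Shepherd-Barron) that a smooth quintic del Pezzo surface is rational over \emph{any} field --- precisely the kind of statement that fails for quadric surfaces and quadric threefolds, which is why your section-based strategy breaks down. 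If you want to salvage your approach, you would need either to produce an actual rational multisection of odd degree of your quadric bundles or to replace the fibers by varieties known to be rational over arbitrary fields, which is exactly what the quintic del Pezzo fibration accomplishes.
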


\begin{proof}
Let us do the proof for an ordinary GM fivefold $X$, corresponding to a Lagrangian $A\subset \bw3V_6$ with no decomposable vectors and a hyperplane $[V_5]\in \P(V_6)\moins Y_{A^\bot}$.\ First, one can show that the surface $Y_A^{\ge 2}$ is not contained in a hyperplane, so we can choose $[v]\in Y_A^2\moins \P(V_5)$.\ By Theorem~\ref{th22}, the corresponding 8 dimensional nonPl\"ucker quadric $Q(v)$ has corank 2 and $X=\Gr(2,V_5)\cap Q(v)$.\ One shows that for a general $6$ dimensional $\bq(v)$-isotropic subspace $I $, the intersection $S\coloneqq \P(I)\cap X=\P(I)\cap \Gr(2,V_5)$   is a smooth quintic del Pezzo surface.\ Consider the linear projection
\begin{equation*}
\pi_{S} \colon X \dashrightarrow \P^{3} 
\end{equation*}
from  $\P(I)$.\ A general fiber   is also a quintic del Pezzo surface: indeed, the intersection of a general~$\P^6$ containing $\P(I) = \P^5$ with $Q(v)$ is a union  $\P(I)\cup\P(I')$, where $I'$ is another Lagrangian subspace, and its intersection with $X$ is
  the union of $S$ and another smooth quintic del Pezzo surface $\P(I')\cap \Gr(2,V_5)$.

Therefore, the   field of rational functions of $X$ is the field of rational functions of 
a smooth quintic del Pezzo surface defined over the field of rational functions on $\P^3$.\
But a smooth quintic del Pezzo surface is rational over any field by a theorem of Enriques
(see~\cite{sb92}).
\end{proof}

The situation for GM threefolds is the following.\ First, any  GM threefold $X$ is unirational (there is a degree 2 rational map $\P^3\dra X$).\ However, using the Clemens--Griffiths criterion and a degeneration argument, one obtains that a general GM threefold is {\em not rational.}\footnote{An alternate argument can be found in \cite{demo}, where  an explicit   irrational GM threefold is constructed.}\ People believe that this should be true for {\em all}   GM threefolds, but lacking a description of the singular locus of the theta divisor of the intermediate Jacobian, this question remains open.

Similarly, any   GM fourfold $X$ is unirational (there is a degree 2 rational map $\P^4\dra X$; see \cite[Proposition~3.1]{dim2}).\ The question of their rationality is not settled and is very much like the case of cubic fourfolds in $\P^5$: some   rational examples are known (see below) and it is expected that a very general GM fourfold is not rational, but not a single example is known.\ Note that by Theorem~\ref{th32}, the rationality of a GM fourfold only depends on its associated Lagrangian.

\begin{exam}[GM fourfolds containing a quintic del Pezzo surface]\label{ex34}
Let $X$ be a     GM fourfold.\ If $Y^3_{A} \not\subset \P(V_5) $, the same argument as in the proof of Proposition~\ref{p33} shows that $X$ contains a quintic del Pezzo surface and is rational.\ More generally, using Theorem~\ref{th32}, one can show 
that~$X$ is rational as soon as $Y^3_{A} \ne \emptyset$  {(see~\cite[Lemma~4.7]{kp} for more details)}.\ This is a codimension~1 condition in the moduli space $\EPW$ for Lagrangians with no decomposable vectors: the set
\begin{equation}\label{defd}
\Delta \coloneqq  \{ [A] \in \LGr(\bw3V_6)\mid Y^3_{A} \ne \emptyset \}
\end{equation}
is a $\PGL(V_6)$-invariant irreducible hypersurface.\ Likewise, this is a codimension 1 condition in the moduli space $\bcM_4$ for GM fourfolds.
\end{exam}

\begin{exam}[GM fourfolds containing a $\sigma$-plane]\label{ex35}
A $\sigma$-plane is a plane of the type $P=\P(V_1\wedge V_4)\subset \P(\bw2V_5)$; it is contained in $\Gr(2,V_5)$.\
Let $X$ be a     GM fourfold, corresponding to a Lagrangian $A\subset \bw3V_6$ with no decomposable vectors and a hyperplane $[V_5]\in    Y^{ 1}_{A^\bot}$.\ One can show that $X$ contains a $\sigma$-plane if and only if $Y^{ 3}_{A}\cap\P(V_5)\ne\vide$.\ In particular, $[A]$ is   in the hypersurface $\Delta$ defined in \eqref{defd}; if $[A]$ is general in $\Delta$, the scheme $Y^{\ge 3}_{A}$ consists of a single point~$[v]$ and one needs $[v]\in \P(V_5)$, or equivalently, $[V_5]\in ( Y^{ 1}_{A^\bot})\cap \P(v^\bot)$.\ So this is a codimension 1 condition on $A$ in $\EPW$  and a codimension 2 condition on $X$ in $\bcM_4$.

It is easy to see directly that $X$ is rational: the image of the   linear projection
$\pi_{P} \colon X \dashrightarrow \P^{5} 
$
from  $P$ is a smooth quadric $Q\subset \P^5$ and the induced morphism $\Bl_PX\to Q$ is the blowup of a smooth degree 9 surface $\widetilde S\subset Q$ (itself the blowup of a degree 10 K3 surface $S\subset \P^6$ at a point).\ This was known to Roth (\cite[Section~4]{rot}) and Prokhorov (\cite[Section~3]{pro}).
\end{exam}

\begin{exam}[Other examples of rational GM fourfolds]\label{ex36}
The image of $\P^2$ by   the linear system of quartic curves through three simple points and one double point in general position is a smooth surface of degree $9$ in $\P^8$.\ It was proved in \cite{hs} that any   GM fourfold containing such a surface is rational.\ This is a codimension 1  condition  in~$\bcM_4$.
\end{exam}

\subsection{Linear spaces contained in GM varieties}

For any partial flag $V_1\subset V_{r+2}\subset V_5$, we consider the $r$ dimensional linear space $\P(V_1\wedge V_{r+2})\subset \Gr(2,V_5)$; it is said to be {\em of $\sigma$-type}.\ For $r\in\{1,3\}$, all $\P^r$ contained in $\Gr(2,V_5)$ are of $\sigma$-type; for $r=2$, all planes contained in~$\Gr(2,V_5)$ are either of $\sigma$-type (called $\sigma$-planes, and already considered in Example~\ref{ex35}) or of the form~$\Gr(2,V_3)$ (called $\tau$-planes).

Let $X$ be an ordinary GM variety and let $F_r(X)$ be the Hilbert scheme of $r$ dimensional linear spaces  of $\sigma$-type contained in $X$.\ There is a well defined morphism
\begin{eqnarray*}
\sigma\colon F_r(X)&\lra&\P(V_5)\\
{[}\P(V_1\wedge V_{r+2})]&\longmapsto&[V_1].
\end{eqnarray*}
We will relate these schemes $F_r(X)$ to the   fibration $\rho\colon \P(\cU_X)\to\P(V_5)$ defined in \eqref{rho}.

\begin{prop}\label{p36}
Let $X$ be a   ordinary GM variety of dimension $n\ge3$, with associated Lagrangian data set $(V_6,V_5,A)$.\
The map 
$\sigma\colon F_r(X) \to \P(V_5)$
lifts to an isomorphism
$$
F_r(X) \isom \Hilb^{{\P^r}}(\P(\cU_X) / \P(V_5))$$
with  the  relative Hilbert  schemes of  $r$ dimensional linear spaces contained in the fibers of the  morphism~$\rho$.
 \end{prop}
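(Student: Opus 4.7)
The plan is to identify $\sigma$-spaces in $X$ directly with linear $\P^r$'s in the fibers of $\rho$ by unwinding the modular interpretation of $\P(\cU_X)$. A point of $\P(\cU_X)$ is a pair $([V_2],[V_1])$ with $V_1\subset V_2\subset V_5$ and $[V_2]\in X$, and $\rho$ is the projection $([V_2],[V_1])\mapsto[V_1]$. Thus the fiber $\rho^{-1}([V_1])$ is the set of 2-planes $V_2\supset V_1$ with $[V_2]\in X$, which via $V_2\mapsto V_2/V_1$ sits in $\P(V_5/V_1)\isom \P^3$ as the quadric cut out in a linear subspace described in the proof of Proposition~\ref{prop31}.

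First I would define a morphism $F_r(X)\to \Hilb^{\P^r}(\P(\cU_X)/\P(V_5))$ by sending a $\sigma$-space $\P(V_1\wedge V_{r+2})\subset X$ to the subscheme of pairs $([V_2],[V_1])$ with $V_1\subset V_2\subset V_{r+2}$. Under the identification above this is simply the linear $\P(V_{r+2}/V_1)\subset \P(V_5/V_1)$ sitting inside the fiber $\rho^{-1}([V_1])$, a linearly embedded $\P^r$. Conversely, given a linear $\P^r\subset \rho^{-1}([V_1])$, I would view it as $\P(V_{r+2}/V_1)$ for the unique $V_{r+2}\supset V_1$ spanning it in $\P(V_5/V_1)$; every 2-plane $V_2$ with $V_1\subset V_2\subset V_{r+2}$ then corresponds to a point of $\rho^{-1}([V_1])$, so $[V_2]\in X$, which shows $\P(V_1\wedge V_{r+2})\subset X$ and is a $\sigma$-space. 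Both constructions are natural in the base and therefore promote to morphisms of schemes via the universal $\sigma$-space over $F_r(X)$ and the universal fiberwise $\P^r$ over the relative Hilbert scheme.

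By construction the two morphisms are mutually inverse and commute with the projections to $\P(V_5)$, so the lifted $\sigma$ is an isomorphism. The one substantive point, and the one I expect to be the main obstacle, is to justify that any closed subscheme of a fiber $\rho^{-1}([V_1])$ having the Hilbert polynomial of a linear $\P^r$ actually is a linear $\P^r$ in the ambient $\P(V_5/V_1)$; this reduces to the standard fact that a closed subscheme of projective space with the Hilbert polynomial of a linear subspace has degree~$1$ and hence is itself a linear subspace. Everything else is a tautological unwinding of the description of $\P(\cU_X)$ as parametrizing flags $V_1\subset V_2$.
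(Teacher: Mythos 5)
Your proposal is correct and follows the same route as the paper: the paper's proof likewise lifts $\P(V_1\wedge V_{r+2})\subset X$ into the fiber $\rho^{-1}([V_1])$ of $\P(\cU_X)$ via $x\mapsto(x,[V_1])$ and observes that, conversely, any $\P^r$ in a fiber projects isomorphically onto a $\sigma$-space in $X$. Your added remarks on promoting the bijection to a scheme isomorphism and on identifying Hilbert points with actual linear subspaces only make explicit what the paper leaves implicit.
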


\begin{proof}
Assume that $P=\P(V_1\wedge V_{r+2})$ is contained in $X$.\ The inclusion $P\hra X$ lifts to an inclusion $P\hra \P(\cU_X)$ by mapping $x\in P$ to $(x,[V_1])$, and the image is contained in the fiber $\rho^{-1}([V_1])$.\ Conversely, any $\P^r$ contained in a fiber~$\rho^{-1}([V_1])$ projects isomorphically onto a $\P^r=\P(V_1\wedge V_{r+2})$ of $\sigma$-type contained in $X$.
 \end{proof}
 
 \begin{coro}\label{coro38}
Let $X$ be an   ordinary GM $n$fold.\ One has $F_3(X)=\vide$ and, if $n\le 3$, one has $F_2(X)=\vide$.
\end{coro}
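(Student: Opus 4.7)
The natural plan is to use Proposition~\ref{p36} to reinterpret $F_r(X)$ as the relative Hilbert scheme of $\P^r$-subspaces in the quadric fibration $\rho\colon\P(\cU_X)\to\P(V_5)$, and then bound, fiber by fiber, the dimension of a linear subspace one can fit inside such a fiber. The problem thus splits into a geometric step (fiber analysis) and an arithmetic step (a short case check).

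First, I would invoke Proposition~\ref{prop31}: each fiber of $\rho$ is a quadric in a $\P^{n-2}$ over $\P(V_5)\moins\Sigma_1(X)$ and in a $\P^{n-1}$ over $\Sigma_1(X)$, with corank prescribed by the EPW stratum of the base point. The key external input is Theorem~\ref{th24}(d), namely $Y_A^{\ge 4}=\vide$: combined with the index shift in the $\Sigma_1(X)$ case, this forces the corank $k$ of any fiber quadric to be at most $3$ generically and at most $2$ over $\Sigma_1(X)$.

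Next, I would use the standard fact that a quadric of corank $k$ in $\P^m$ (with $0\le k\le m+1$) contains no linear subspace of dimension exceeding $k+\lfloor(m-k-1)/2\rfloor$, the maximum being achieved by joining the vertex $\P^{k-1}$ with a maximal isotropic of the transverse smooth quadric (at the extreme $k=m+1$ the ``quadric'' is all of $\P^m$ and the bound reads $m$). Plugging the pairs $(m,k)$ allowed by Proposition~\ref{prop31} and the corank bound above, a direct check in each of the cases $n\in\{3,4,5\}$ shows that this maximum never exceeds $2$, and never exceeds $1$ when $n=3$. This gives $F_3(X)=\vide$ in all cases and $F_2(X)=\vide$ when $n=3$; the cases $n\in\{1,2\}$ are vacuous by dimension.

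I do not foresee any genuine obstacle. The only points requiring a moment of attention are (i)~correctly reading the corank bound off the EPW stratification, keeping in mind the shift from $Y_A^k$ to $Y_A^{k+1}$ over $\Sigma_1(X)$, and (ii)~accounting for the degenerate values of $k$ where the fiber quadric could drop to a reduced hyperplane or fill the ambient $\P^m$, which the join formula above still handles uniformly.
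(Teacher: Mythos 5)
Your proposal is correct and follows essentially the same route as the paper: reduce via Proposition~\ref{p36} to linear subspaces of the fibers of $\rho$, bound the corank of those fiber quadrics using Proposition~\ref{prop31} together with $Y_A^{\ge 4}=\vide$ from Theorem~\ref{th24}(d), and check that no quadric of the allowed type contains a $\P^3$ (resp.\ a $\P^2$ when $n=3$). The only difference is that you spell out the corank bound and the maximal-isotropic formula explicitly, which the paper leaves implicit.
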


\begin{proof}
By Proposition~\ref{p36}, any $\P^3$ contained  in $X$ is contained in a fiber $\rho^{-1}([v])$, which, by Proposition~\ref{prop31}, is a quadric of corank $\le 3$ in $\P^{n-2}$ if $v \notin \Sigma_1(X)$, or a quadric of corank $\le 2$ in~$\P^{n-1}$ if $v \in \Sigma_1(X)$.\ But no such quadric contains a $\P^3$, hence $F_3(X)=\vide$.\ The reasoning for~$F_2(X)$ is analogous.
\end{proof}

One could also deduce Corollary~\ref{coro38} from Hodge theory and the Lefschetz theorem (see Section~\ref{se41}).\ The next results are  more interesting;  we assume $Y^3_A=\vide$ to make the statement cleaner (but then we miss the case of GM fourfolds containing a $\sigma$-plane, as explained in Example~\ref{ex35}).\footnote{For a complete statement, see \cite[Theorem~4.3]{dk2}.\ For the description of the scheme of $\tau$-planes contained in a GM variety, see \cite[Theorem~4.5]{dk2}.}

 \begin{theo}[$\sigma$-planes on a GM variety]\label{th38}
Let $X$ be a     GM variety of dimension $n\in\{4,5,6\}$, with associated Lagrangian data set $(V_6,V_5,A)$.\ We assume $Y^3_A=\vide$.
\begin{itemize}
\item If $n=6$,
 there is a factorization
\begin{equation*}
\sigma\colon F_2(X) \xrightarrow{\ \tilde\sigma\ } \tY_{A,V_5} \xrightarrow{\ f_{A,V_5}\ } Y_{A}\cap\P(V_5)  \lhra \P(V_5), 
\end{equation*}
where $\tilde\sigma$ is a $\P^1$-bundle and $f_{A,V_5}$ is an irreducible ramified double cover (between threefolds).\ In particular, $F_2(X)$ is irreducible of dimension 4.
\item If $n=5$ and $X$ is ordinary,  there is a factorization
\begin{equation*}
\sigma\colon 
F_2(X) \xrightarrow{\ g_{A,V_5}\ }  Y^{2}_A\cap\P(V_5) \lhra \P(V_5), 
\end{equation*}
where $g_{A,V_5}$
 is a connected \'etale double cover.\ In particular, $F_2(X)$  is a connected curve.
\item If $n=4$, one has $F_2(X)=\vide$.
\end{itemize}
\end{theo}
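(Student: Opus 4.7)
The approach is to invoke Proposition~\ref{p36}, which identifies $F_2(X)$ with the relative Hilbert scheme $\Hilb^{\P^2}(\P(\cU_X)/\P(V_5))$ of the quadric fibration $\rho$. The problem then reduces to describing, for each $[v]\in\P(V_5)$, the planes contained in the fiber $Q'(v):=\rho^{-1}([v])$. By Proposition~\ref{prop31}, $Q'(v)$ is a quadric whose projective dimension and corank are controlled by the position of $[v]$ with respect to $\Sigma_1(X)$ and the EPW strata $Y_A^{\ge k}\cap\P(V_5)$; the standing assumption $Y_A^3=\vide$ keeps the possible fiber types tractable.

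For $n=4$, the generic fiber is a conic in $\P^2$, which contains no plane; over the single point $\Sigma_1(X)$ the fiber is a quadric surface in $\P^3$, which contains a plane exactly when it is reducible, i.e.\ when its corank is $\ge 2$. By Proposition~\ref{prop31}, corank $\ge 2$ forces $[v]\in Y_A^{\ge 3}\cap\P(V_5)\subset Y_A^3=\vide$, so $F_2(X)=\vide$.

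For $n=5$ (ordinary, $\Sigma_1(X)=\vide$), every fiber of $\rho$ is a quadric surface in $\P^3$, and contains a plane iff its corank is $\ge 2$. Under $Y_A^3=\vide$ every such fiber has corank exactly $2$ and therefore splits as a union of two distinct planes; this produces a set-theoretic double cover $g_{A,V_5}\colon F_2(X)\to Y_A^2\cap\P(V_5)$, étale because the two components never come together. The base $Y_A^2\cap\P(V_5)$ is a connected curve, being a hyperplane section of the integral surface $Y_A^{\ge 2}$ of Theorem~\ref{th24}(b). For $n=6$, an analogous fiber analysis (for which Proposition~\ref{prop31} has to be extended to the special sixfold setting) yields fibers that are $3$-dimensional quadrics in $\P^4$; smooth such quadrics contain only lines, so $\sigma$ factors through $Y_A\cap\P(V_5)$. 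A corank-$1$ quadric in $\P^4$ is a cone from a point over a smooth quadric surface, whose planes split into two disjoint $\P^1$-rulings (obtained by joining the vertex to the two rulings of lines); this produces $\tilde\sigma$ as a $\P^1$-bundle and $f_{A,V_5}$ as the double cover parameterizing the choice of ruling. Over the corank-$2$ locus $Y_A^{\ge 2}\cap\P(V_5)$ the quadric degenerates to a cone with $\P^1$-vertex over an irreducible conic, whose planes form a single $\P^1$-family; this identifies the branch locus of $f_{A,V_5}$, and $Y_A^3=\vide$ prevents further degeneration.

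The hard part will be the connectedness, hence irreducibility, of the double covers in the two latter cases. The local fiber analysis only produces a set-theoretic $2$-to-$1$ structure, and one has to globally rule out the possibility that the cover splits into two disjoint sheets. The plan is to realize these covers as restrictions to $\P(V_5)$ of canonical double covers defined globally on $Y_A^{\ge 2}$ (respectively on $Y_A$) by the EPW data, and then invoke the integrality statements of Theorem~\ref{th24} to transfer connectedness from the ambient cover to the slice. In the sixfold case, genuine ramification of $f_{A,V_5}$ along $Y_A^{\ge 2}\cap\P(V_5)$ already forbids a splitting; in the fivefold case, one must argue instead via monodromy or via the non-triviality of the class in $H^1(Y_A^2\cap\P(V_5),\Z/2)$ classifying the cover, a non-triviality inherited from the corresponding global class on $Y_A^{\ge 2}$.
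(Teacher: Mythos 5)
Your proposal is correct and follows essentially the same route as the paper: reduce via Proposition~\ref{p36} to counting planes in the fibers of the quadric fibration $\rho$, whose dimensions and coranks are controlled by Proposition~\ref{prop31} (a conic contains no plane, a rank-2 quadric in $\P^3$ is two distinct planes, a corank-1 quadric threefold carries two $\P^1$-rulings of planes), with connectedness deferred to the identification of these covers with the restrictions to $\P(V_5)$ of the canonical double covers $g_A$ and $f_A$. The paper's own proof is exactly this fiber analysis (carried out only for $n=5$, with connectedness ``shown by other means,'' namely the global identification from \cite{dk4} stated at the end of Section~\ref{se34}), so no further comparison is needed.
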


\begin{proof}
This is again a direct consequence of Propositions~\ref{p36} and~\ref{prop31}.\ Let us do the case $n=5$ and $X$ ordinary, where $\Sigma_1(X)=\vide$.\ We have to look at planes contained in quadrics of corank $k\le 2$ in $\P^3$.\ They only exist if $k=2$ and there are exactly 2 of them; the conclusion follows (except for the connectedness of $F_2(X)$, which has to be shown by other means).
\end{proof}

The next theorem is proved similarly.

 \begin{theo}[Lines on a GM variety]\label{th39}
Let $X$ be a     GM variety of dimension $n\in\{3,4\}$, with associated Lagrangian data set $(V_6,V_5,A)$.\ We assume $Y^3_A=\vide$.
\begin{itemize}
\item If $n=4$, the scheme $F_1(X)$ is integral of dimension 3 and
  there is a factorization
\begin{equation*}
\sigma\colon F_1(X) \xrightarrow{\ \tilde\sigma\ } \tY_{A,V_5} \xrightarrow{\ f_{A,V_5}\ } Y_{A}\cap\P(V_5)  \lhra \P(V_5), 
\end{equation*}
where $\tilde\sigma$  is an isomorphism over  the complement of the inverse image by $f_{A,V_5}$ of the point $  \Sigma_1(X)$ of $ Y_{A}\cap\P(V_5) $  and has~$\P^1$ fibers over that set, and $f_{A,V_5}$ is an irreducible ramified double cover.\footnote{The same as in Theorem~\ref{th38}.}
\item If $n=3$, the scheme $F_1(X)$ is reduced of pure dimension 1 and  there is a factorization
\begin{equation*}
\sigma\colon 
F_1(X) \xrightarrow{\ \tilde\sigma\ }  Y^{2}_A\cap\P(V_5) \lhra \P(V_5), 
\end{equation*}
where $\tilde\sigma$ is an isomorphism over the complement of $Y^{2}_A\cap  \Sigma_1(X)$ and a double \'etale cover over 
that set.\  
\end{itemize}
\end{theo}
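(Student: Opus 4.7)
\emph{My plan is to proceed in close parallel with the proof of Theorem~\ref{th38}.}\ By Proposition~\ref{p36}, $F_1(X)$ is canonically isomorphic to the relative Hilbert scheme of lines in the quadric fibration $\rho\colon\P(\cU_X)\to\P(V_5)$, so I reduce everything to counting (with scheme structure) the lines in each fiber $\rho^{-1}([v])$.\ Proposition~\ref{prop31} supplies the description of this fiber: it is a quadric in $\P^{n-2}$ for $[v]\notin\Sigma_1(X)$ and in $\P^{n-1}$ for $[v]\in\Sigma_1(X)$, with corank equal to $k$ when $[v]\in Y^k_A$ (resp.\ $Y^{k+1}_A$).\ The hypothesis $Y^3_A=\vide$ forces the corank to be at most $2$ in every fiber, so only a short list of quadric types occurs and the case analysis is finite.

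For $n=4$, I observe that outside $\Sigma_1(X)$ the fiber is a plane conic---smooth and containing no line when $[v]\notin Y_A$, a union of two distinct lines when $[v]\in Y_A\moins Y^2_A$, and a double line when $[v]\in Y^2_A$---whereas over the single point $\Sigma_1(X)\subset Y_A$ the fiber is a quadric in $\P^3$ of corank $0$ or $1$, carrying two or one $\P^1$-families of rulings respectively.\ Assembling these lines over $\P(V_5)\moins\Sigma_1(X)$ should yield a degree-two cover of $Y_A\cap\P(V_5)$ ramified exactly over $Y^2_A\cap\P(V_5)$, and I then identify this with the restriction $\tY_{A,V_5}$ of the canonical double cover $\tY_A\to Y_A$ appearing in Theorem~\ref{th38}: the two rulings of a rank-$2$ plane conic in $\rho^{-1}([v])$ correspond to the two sheets of $\tY_A$ over $[v]$ via the underlying Lagrangian structure.\ The $\P^1$-families of rulings above $\Sigma_1(X)$ then account for the positive-dimensional fibers of $\tilde\sigma$ over $f_{A,V_5}^{-1}(\Sigma_1(X))$.

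For $n=3$, the fiber of $\rho$ is two points of $\P^1$ for $[v]\notin(\Sigma_1(X)\cup Y_A)$, a double point when $[v]\in Y_A\moins Y^2_A$, the whole $\P^1$ when $[v]\in Y^2_A\moins\Sigma_1(X)$, and a plane conic when $[v]\in\Sigma_1(X)$.\ A line thus appears only in the corank-$2$ case off $\Sigma_1(X)$ (giving a single line over each $[v]\in Y^2_A\moins\Sigma_1(X)$) and in the rank-$2$ conic case (giving two distinct lines over each $[v]\in Y^2_A\cap\Sigma_1(X)$), from which the claimed factorization through $Y^2_A\cap\P(V_5)$ follows.\ The main technical difficulty I expect to face is making all of this scheme-theoretically precise---the integrality and pure three-dimensionality of $F_1(X)$ in case $n=4$ (requiring irreducibility of $\tY_{A,V_5}$ together with a transversality analysis at the $\P^1$-fibers above $\Sigma_1(X)$), and the reducedness together with the \'etaleness assertion in case $n=3$ (which reduces to showing that the two components of the degenerate conic above each point of $Y^2_A\cap\Sigma_1(X)$ are genuinely distinct---this is precisely where the hypothesis $Y^3_A=\vide$ is used, to prevent the corank from jumping to~$2$).
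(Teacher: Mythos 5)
Your proposal follows exactly the paper's route: the paper disposes of this theorem with "proved similarly" to Theorem~\ref{th38}, whose proof is precisely the combination of Proposition~\ref{p36} (identifying $F_1(X)$ with the relative Hilbert scheme of lines of the quadric fibration $\rho$) and Proposition~\ref{prop31} (the corank stratification of the fibers), followed by the same case-by-case count of lines in low-corank quadrics in $\P^{n-2}$ and $\P^{n-1}$ that you carry out. The technical points you flag as remaining (integrality/reducedness of $F_1(X)$ and the identification of the double cover with the restriction of the canonical cover $f_A\colon\tY_A\to Y_A$) are likewise deferred by the paper to other means, so your account matches both the argument and its acknowledged gaps.
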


In the  next section,   we will identify  the double covers $g_{A,V_5}\colon F_2(X) \to Y^2_{A}\cap\P(V_5)$  and $f_{A,V_5}\colon \tY_{A,V_5} \to Y_{A}\cap\P(V_5)$  occurring in the   theorems: they   are induced by restriction to $\P(V_5)$ of  canonical double covers $g_A\colon \widetilde Y^{ 2}_A \to Y_{A}^2$ and $f_A\colon\tY_A \to Y_{A}$.

\begin{rema}\upshape\label{re311}
All GM varieties of dimension $\ge 3$ contain lines (\cite[Theorem~4.7]{dk2}).\ All GM varieties of dimension $\ge 5$ contain $\sigma$-planes and $\tau$-planes (\cite[Theorems~4.3 and~4.5]{dk2}).\ 
\end{rema}

\subsection{Double EPW sextics}\label{se34}
We start from a Lagrangian $A\subset \bw3V_6$  with no decomposable vectors and its associated EPW sextic $Y_A\subset \P(V_6)$ as defined by \eqref{yabot}.\   O'Grady (\cite{og1}) constructed a double cover
\begin{equation}\label{deffA}
f_A\colon \tY_A\coloneqq \bSpec(\cO_{Y_A}\oplus \cR)\lra Y_A,
\end{equation}
where $\cR$ is a certain reflexive self-dual rank 1 sheaf on $Y_A$,    as follows.

In the trivial symplectic vector bundle  $\cV\coloneqq \bw3V_6\otimes \cO_{\P(V_6)}$, one considers the Lagrangian subbundles~$\cA_1$, with constant fiber $A\subset \bw3V_6$, and  $\cA_2$, with fiber $v\wedge \bw2V_6$ at $[v]\in \P(V_6)$.\ With the notation of \eqref{dualYA}, the schemes $Y_A^{\ge \ell}$ are the {\em Lagrangian degeneration schemes}
$$D^\ell(\cA_1,\cA_2)\coloneqq \{[v]\in \P(V_6)\mid \dim( \cA_{1,[v]}\cap  \cA_{2,[v]})\ge\ell\}.$$
More precisely, $D^\ell(\cA_1,\cA_2)$ is defined as the   corank $\ell$ degeneracy locus of the morphism
\begin{equation*}
\omega_{\cA_1,\cA_2} \colon 
 \cA_1 \lhra \cV \xrightarrow{\ \sim\ } \cV^\vee  \twoheadrightarrow \cA_2^\vee  ,
\end{equation*}
where the middle isomorphism  is induced by the symplectic form on $\cV$.\ For example, $D^1(\cA_1,\cA_2)=Y^{\ge1}_A=Y_A$ is the scheme-theoretic support of the sheaf $\cC\coloneqq \Coker (\omega_{\cA_1,\cA_2})$.\ At a point $[v]\in Y^1_A$, the morphism $\omega_{\cA_1,\cA_2}$ induces an isomorphism
\begin{equation}\label{isoo}
 \cA_{1,[v]}/(\cA_{1,[v]}\cap  \cA_{2,[v]})\isomlra (\cA_{1,[v]}\cap  \cA_{2,[v]})^\bot\subset\cA^\vee_{2,[v]}
\end{equation}
hence the sheaf $\cC$ has rank $1$ on $Y^1_A$, with fiber $(\cA_{1,[v]}\cap  \cA_{2,[v]})^\vee$ at $[v]$.

The fiber of the dual $\cC^\vee$ at a point $[v]\in Y^1_A$ is therefore $\cA_{1,[v]}\cap  \cA_{2,[v]}$, and the map \eqref{isoo} induces an isomorphism  between $\cA_1/\cC^\vee$ and $(\cA_2/\cC^\vee)^\vee$ on~$Y^1_A$, hence between their determinants, which are $\cC^{\vee\vee}$ and $\cC^{\vee}(-6) $ (because $\cA_1$ is trivial and $\det(\cA_2)\isom \cO(6)$).\ If we set
$$\cR\coloneqq \cC^{\vee\vee}( -3),$$
 this gives an isomorphism between the invertible sheaf $\cR\vert_{Y^1_A}$ and its dual.\ Since $\cR$ and $\cR^\vee$ are reflexive and $Y_A^{\ge 2}$ has codimension $2$ in $Y_A$ (Theorem~\ref{th24}), this self-duality extends uniquely to a self-duality on $Y_A$, which we see as a symmetric multiplication map
$$\cR\otimes \cR\lra \cO_{Y_A}$$
that defines an  $ \cO_{Y_A}$-algebra structure on $ \cO_{Y_A}\oplus \cR$ hence the double cover $f_A$ in \eqref{deffA}.

The singularities of $Y_A$ and $\tY_A$ were described in \cite{og4}.

\begin{theo}[O'Grady]\label{th310}
Let  $A\subset \bw3V_6$ be a Lagrangian  with no decomposable vectors.\
The morphism $f_A\colon \tY_A\to Y_A$ is branched along the  integral normal surface~$Y^{\ge 2}_A$, which is the singular locus of $Y_A$.\ 
The fourfold~$\tY_A$, called a {\em double EPW sextic}, is irreducible and smooth outside the finite set $f_A^{-1}(Y^3_A)$.
\end{theo}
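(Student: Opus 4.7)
The statement contains three assertions: (i) the branch locus of $f_A$ is $Y_A^{\ge 2}$, (ii) $\tY_A$ is irreducible, and (iii) $\tY_A$ is smooth outside $f_A^{-1}(Y_A^3)$. My plan is to handle (i) directly from the construction, (ii) from integrality of $Y_A$ combined with nontrivial branching, and to do the real work in (iii) via an explicit local model on the $Y_A^2\setminus Y_A^3$ stratum.

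For part (i), the algebra structure on $\cO_{Y_A}\oplus \cR$ is the symmetric multiplication $\cR\otimes \cR\to \cO_{Y_A}$ constructed just before the statement; on the open subset $Y_A^1\setminus Y_A^{\ge 2}$ it corresponds to the isomorphism \eqref{isoo} between $\cA_1/(\cA_1\cap\cA_2)$ and $(\cA_1\cap\cA_2)^\bot$, which is a perfect pairing precisely where $\dim(\cA_1\cap\cA_2)=1$. Hence the discriminant section vanishes exactly on $Y_A^{\ge 2}$. For part (ii), by Theorem~\ref{th24}(a) the base $Y_A$ is integral, so $\tY_A$ is either integral or splits as two disjoint copies of $Y_A$; the latter would force $f_A$ to be étale, contradicting the nontrivial branching from (i) (since $Y_A^{\ge 2}$ is a nonempty surface by Theorem~\ref{th24}(b)). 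Once (iii) is established, local irreducibility of $\tY_A$ on $f_A^{-1}(Y_A\setminus Y_A^3)$ combined with connectedness yields global irreducibility.

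The heart of the proof is (iii), via local analysis of the Lagrangian degeneracy. Away from $Y_A^{\ge 2}$, the morphism $f_A$ is étale onto the smooth locus of $Y_A$ (since $\Sing(Y_A)=Y_A^{\ge 2}$ by Theorem~\ref{th24}(b)), so $\tY_A$ is smooth there. Fix now $[v_0]\in Y_A^2\setminus Y_A^3$. After an analytic symmetric block decomposition of $\omega_{\cA_1,\cA_2}$ in a neighborhood of $[v_0]\in \P(V_6)$, one can write
\begin{equation*}
\omega_{\cA_1,\cA_2}\ \sim\ M_0(x)\oplus N(x),\qquad M_0(x)=\begin{pmatrix}a(x)&b(x)\\ b(x)&c(x)\end{pmatrix},
\end{equation*}
where $N$ is a nondegenerate $18\times 18$ symmetric matrix and $a,b,c$ are holomorphic functions on a neighborhood of $[v_0]$ in $\P(V_6)$ vanishing at $[v_0]$. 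The local equation of $Y_A$ is then $\det M_0=ac-b^2$ up to a unit. The assumption $[v_0]\notin Y_A^3$ forces $da,db,dc$ to be linearly independent at $[v_0]$: otherwise a nonzero linear combination of $a,b,c$ would vanish to order two, producing nearby points where $M_0$ drops to corank $3$, i.e.\ $\omega_{\cA_1,\cA_2}$ drops to corank $\ge 3$, contradicting $[v_0]\notin Y_A^{\ge 3}$ by semicontinuity.

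Thus $(a,b,c)$ completes to an analytic coordinate system $(a,b,c,u_1,u_2)$ near $[v_0]$ and $Y_A$ is locally isomorphic to the product of the $A_1$-singularity $\{ac=b^2\}\subset \A^3$ with the smooth surface $\A^2_{u_1,u_2}$. Tracing through the construction of $\cR$ in this chart, one identifies the double cover $f_A$ locally with the standard parametrization
\begin{equation*}
\A^2_{s,t}\times \A^2_{u_1,u_2}\ \lra\ \{ac=b^2\}\times \A^2_{u_1,u_2},\qquad (s,t,u_1,u_2)\longmapsto (s^2,st,t^2,u_1,u_2),
\end{equation*}
realizing $\tY_A$ as an $\A^4$ near the unique preimage of $[v_0]$, hence smooth. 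The main obstacle here is precisely this last identification: one must check that the reflexive rank-$1$ sheaf $\cR=\cC^{\vee\vee}(-3)$ and its self-multiplication, when pulled back to the local model, really produce the $\Z/2$-quotient $\A^2\to\{ac=b^2\}$ rather than a merely isomorphic abstract double cover, which amounts to matching $\cR$ locally with the module $s\cdot \cO_{Y_0}+t\cdot\cO_{Y_0}$ inside the normalization of $Y_0=\{ac=b^2\}$.
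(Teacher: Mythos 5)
The paper itself gives no proof of this statement---it is quoted from O'Grady \cite{og1} (see also \cite{dk4})---so your proposal can only be measured against the standard argument there. Your architecture is the right one: \'etale over $Y_A\setminus Y_A^{\ge 2}$, a symmetric block decomposition at a point $[v_0]\in Y_A^2\setminus Y_A^3$, and reduction to the quotient $\A^2\to\A^2/\{\pm1\}\cong\{ac=b^2\}$. But the key step fails as written. You claim $[v_0]\notin Y_A^3$ forces $da,db,dc$ to be independent because a dependent combination would ``produce nearby points where $M_0$ drops to corank $3$.'' A $2\times2$ block cannot have corank $3$: after splitting off the nondegenerate block $N$ (which is $8\times8$, not $18\times18$---the Lagrangians are $10$-dimensional, so $\omega_{\cA_1,\cA_2}$ is a $10\times10$ symmetric matrix), the corank of $\omega_{\cA_1,\cA_2}$ near $[v_0]$ equals the corank of $M_0$ and is automatically $\le 2$, so the absence of corank-$3$ points nearby says nothing about the differentials of $a,b,c$. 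The submersivity of $[v]\mapsto M_0(v)\in\Sym^2K^\vee\cong\C^3$ at $[v_0]$ is exactly where the hypothesis that $A$ contains no decomposable vectors enters (via the explicit formula for the derivative of the quadratic form on $K=A\cap(v_0\wedge\bw2V_6)$), and your proof never invokes that hypothesis anywhere---a serious warning sign. Alternatively you could extract the transversality from Theorem~\ref{th24}(b),(c), which says precisely that the degeneracy scheme $\{a=b=c=0\}=Y_A^{\ge2}$ is a reduced surface, smooth of codimension $3$ in $\P(V_6)$ away from $Y_A^3$; but you must cite something, because the statement is false for arbitrary Lagrangians.

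Second, the issue you flag at the end---identifying $\cR$ in the local chart with the module $s\cdot\cO+t\cdot\cO$ inside the normalization of $\{ac=b^2\}$, i.e.\ checking that $\bSpec(\cO_{Y_A}\oplus\cR)$ is the quotient $\A^2\to\A^2/\{\pm1\}$ and not some other double cover with the same branch locus---is not a side remark: it is the computation that actually yields smoothness, and it is the technical core of O'Grady's proof. Leaving it as an acknowledged obstacle means assertion (iii) is not proved. The smaller points (the dichotomy ``integral or two disjoint copies'' in (ii) needs the reflexivity of $\cR$ so that triviality off the codimension-$2$ locus $Y_A^{\ge2}$ propagates to all of $Y_A$, and deducing irreducibility from connectedness plus smoothness outside a finite set needs $\tY_A$ to be connected in codimension one, e.g.\ because $\cO_{Y_A}\oplus\cR$ satisfies Serre's condition $S_2$) are repairable, but the two gaps above are not cosmetic.
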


Using similar ideas (see~\cite{dk4}), one can also construct a canonical irreducible double cover
\begin{equation}\label{gA}
g_A\colon \widetilde Y^{\ge 2}_A\lra Y^{\ge 2}_A
\end{equation}
branched along the finite set $Y^3_A$, such that 
$$(g_A)_*\cO_{\widetilde Y^{\ge 2}_A}\isom \cO_{Y^{\ge 2}_A}\oplus \omega_{Y^{\ge 2}_A}(-3).$$
The singular locus of the surface $\widetilde Y^{\ge 2}_A$ is the   finite set $f_A^{-1}(Y^3_A)$.\ When $Y^3_A=\vide$, the surface $\widetilde Y^{\ge 2}_A$ is  smooth     of general type of irregularity $h^1(\widetilde Y^{ 2}_A,\cO_{\widetilde Y^2_A})=10$.

 Finally, one  can prove (see~\cite{dk4}) that  the double covers $g_{A,V_5}\colon F_2(X) \to Y^2_{A}\cap\P(V_5)$  and   $f_{A,V_5}\colon \tY_{A,V_5} \to Y_{A}\cap\P(V_5)$ that appear in Theorems~\ref{th38} and~\ref{th39}   are the restrictions to $\P(V_5)$ of the canonical double covers $g_A\colon \widetilde Y^{ 2}_A \to Y_{A}^2$ and $f_A\colon\tY_A \to Y_{A}$.

\begin{coro}\label{cor312}
Let  $A\subset \bw3V_6$ be a Lagrangian subspace with no decomposable vectors and  such that $Y_A^3=\vide$.\ Let $X$ be the ordinary GM fourfold associated with a general $[V_5]$  in $Y_{A^\bot}$.\  
\begin{itemize}
\item The threefold $\tY_{A,V_5} $ has two singular points.
\item The threefold $F_1(X)$ is   smooth irreducible   and $\tilde\sigma\colon F_1(X)\to \tY_{A,V_5}$ is a small resolution.
\end{itemize}
\end{coro}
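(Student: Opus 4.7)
The strategy is to deduce the first bullet from the smoothness of the double EPW sextic $\tY_A$ combined with projective duality, and then to obtain the second bullet by combining Theorem~\ref{th39} (case $n=4$) with a smoothness check along the exceptional fibers using the description of $F_1(X)$ as a relative Hilbert scheme (Proposition~\ref{p36}).

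For the first bullet: since $Y_A^3=\vide$, Theorem~\ref{th310} gives that $\tY_A$ is a smooth fourfold, and $\tY_{A,V_5}:=f_A^{-1}(Y_A\cap\P(V_5))$ is the Cartier divisor in $\tY_A$ cut out by $f_A^*\ell$, where $\ell$ is a linear form with zero locus $\P(V_5)$. Its singular points are therefore the preimages of the singular points of $Y_A\cap\P(V_5)$; away from the ramification locus $f_A^{-1}(Y_A^{\ge 2})$ these are exactly the tangency points of $\P(V_5)$ with the smooth part of $Y_A$. Because $Y_{A^\perp}$ is the projective dual of $Y_A$ (Section~\ref{s24}), a general $[V_5]\in Y_{A^\perp}\moins Y_{A^\perp}^{\ge 2}$ is simply tangent to $Y_A$ at a unique smooth point, which a direct computation identifies with $[v_0]:=\Sigma_1(X)$; for general $[V_5]$ one also checks that $[v_0]\notin Y_A^{\ge 2}$ and that $\P(V_5)$ meets $Y_A^{\ge 2}$ transversely (so no extra singularities arise over that surface). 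The tangency at $[v_0]$ is ordinary, giving a node of $Y_A\cap\P(V_5)$ there, so $\tY_{A,V_5}$ has exactly two singular points $\{p_1,p_2\}=f_A^{-1}([v_0])$, each an ordinary node.

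For the second bullet: Theorem~\ref{th39} tells us that $\tilde\sigma\colon F_1(X)\to\tY_{A,V_5}$ is an isomorphism outside $\{p_1,p_2\}$ and has $\P^1$ fibers above each $p_i$, so $\tilde\sigma$ is birational with fibers of dimension $\le 1$, hence small; $F_1(X)$ is irreducible since $\tY_{A,V_5}$ is (being a hyperplane section of the smooth irreducible $\tY_A$); and $F_1(X)$ is smooth outside the two exceptional $\P^1$'s by the first bullet. To check smoothness along these $\P^1$'s, use Proposition~\ref{p36} to identify $F_1(X)=\Hilb^{\P^1}(\P(\cU_X)/\P(V_5))$, and Proposition~\ref{prop31} (second bullet) to see that $\rho^{-1}([v_0])$ is a smooth quadric surface in $\P^3$ (corank $0$, as $[v_0]\in Y_A\moins Y_A^{\ge 2}$), whose two rulings account for the two exceptional $\P^1$'s. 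A standard deformation-theoretic computation for a line $L$ in this smooth quadric fiber then yields unobstructedness of the relative Hilbert scheme at $[L]$, so $F_1(X)$ is smooth and $\tilde\sigma$ is the small resolution.

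The main obstacle is the last step: the quadric fibration $\rho$ is not flat at $[v_0]$ (the fiber dimension jumps from $1$ to $2$), so obtaining the correct normal sheaf for $L$ in the jump fiber and verifying vanishing of its $H^1$ requires careful bookkeeping of the relative deformation theory across this jump. The genericity claims above---that $[v_0]\notin Y_A^{\ge 2}$, that the tangency of $\P(V_5)$ to $Y_A$ at $[v_0]$ is ordinary, and that $\P(V_5)$ meets $Y_A^{\ge 2}$ transversely---follow from dimension counts using Theorem~\ref{th24}, but need to be spelled out.
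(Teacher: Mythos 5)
Your proposal matches the paper's own argument: the paper likewise deduces the two singular points from projective duality (identifying the tangency point with the single point of $\Sigma_1(X)$, whose two preimages under $f_A$ form $\Sing(\tY_{A,V_5})$), and then invokes Theorem~\ref{th39} together with a deformation-theoretic verification that $F_1(X)$ is smooth. Your expansion of that last step via the relative Hilbert scheme of the quadric fibration and the smooth quadric fiber over $\Sigma_1(X)$, and your flagged genericity checks, are exactly the details the paper leaves implicit.
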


\begin{proof}
We already mentioned that $Y_{A^\bot}$ is the projective dual of $Y_A$.\ Projective duality implies that the inverse image by $\tY_A\to Y_A\subset \P(V_6)$ of a general 
$[V_5]$  in $Y_{A^\bot}$ has at least two singular points.\ More precisely, 
the single point $[v]$ of $\Sigma_1(X)$ is in $Y^1_{A}\cap\P(V_5)$ (see Proposition~\ref{prop31}) and the singular locus of $\tY_{A,V_5} $ consists of its two preimages in $\tY_A$.\ The conclusion then follows from
Theorem~\ref{th39} and the verification via a tangent space computation that $F_1(X)$ is smooth.
\end{proof}

 \begin{coro}\label{cor3125}
Let  $A\subset \bw3V_6$ be a Lagrangian subspace with no decomposable vectors and  such that $Y_A^3=\vide$.\ Let $X$ be the ordinary GM fivefold associated with a  general hyperplane~$V_5\subset V_6$.\  

The curve $F_2(X)$ is  smooth connected  of genus 161 and $\tilde\sigma\colon F_2(X)\to Y^2_A\cap \P(V_5)$ is an \'etale double cover.
\end{coro}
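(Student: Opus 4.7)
The statement decomposes into the double-cover assertion, smoothness, connectedness, and the genus computation. For the first three, I would invoke Theorem~\ref{th38} with $n=5$: it already produces a connected étale double cover $\sigma=\tilde\sigma\colon F_2(X)\to Y^2_A\cap\P(V_5)$. As stated at the end of Section~\ref{se34}, this map is the restriction to $\P(V_5)$ of the canonical double cover $g_A\colon \widetilde Y^{\ge 2}_A\to Y^{\ge 2}_A$, whose branch locus is $Y^3_A=\vide$, so the cover is indeed étale. Since $Y^3_A=\vide$, the surface $S:=Y^{\ge 2}_A$ is smooth (see Section~\ref{se34}), and a Bertini argument for a general hyperplane $V_5\subset V_6$ shows that $C:=Y^2_A\cap \P(V_5)=S\cap\P(V_5)$ is a smooth curve (this uses that $S$ is not contained in any hyperplane, a fact already exploited in the proof of Proposition~\ref{p33}). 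Being an étale cover of a smooth curve, $F_2(X)$ is then smooth.

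For the genus computation, I would first extract the canonical class of $S$ from the pushforward formula $(g_A)_*\cO_{\widetilde Y^{\ge 2}_A}\isom \cO_S\oplus \omega_S(-3)$. Writing $g_A$ as the cyclic double cover associated with a line bundle $\cL$ satisfying $\cL^{-1}\isom \omega_S(-3)$, its branch divisor lies in $|2\cL|$, which is numerically equivalent to $|{-2K_S+6H_S}|$, where $H_S$ denotes the restriction of the hyperplane class of $\P(V_6)$ to $S$. Since the cover is étale, this branch divisor is trivial, hence $K_S\equiv 3H_S$ numerically. Combining the degree $H_S^2=\deg(Y^{\ge 2}_A)=40$ from Theorem~\ref{th24}(b) with adjunction on the smooth hyperplane section $C\subset S$ then yields
\[
2g(C)-2=(K_S+H_S)\cdot H_S=4H_S^2=160,
\]
so $g(C)=81$. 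Finally, Riemann--Hurwitz applied to the étale double cover $F_2(X)\to C$ gives $g(F_2(X))=2g(C)-1=161$.

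\textbf{Main obstacle.} The most delicate step is the identification $K_S\equiv 3H_S$: it requires reading the line bundle defining $g_A$ off of the pushforward formula and then translating triviality of the branch locus into the linear equivalence $2\cL\sim 0$. The possible 2-torsion ambiguity is immaterial for the intersection-theoretic computation. Once this is in hand, the remainder is routine bookkeeping with results already established in Theorems~\ref{th38} and~\ref{th24} and Section~\ref{se34}.
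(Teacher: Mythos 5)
Your proposal is correct and follows the paper's own route: Bertini for smoothness of $Y^2_A\cap\P(V_5)$, Theorem~\ref{th38} for the connected \'etale double cover, and a genus computation from the numerical invariants of the smooth surface $Y^{\ge 2}_A$ (which the paper leaves implicit but which your derivation of $K_S\num 3H_S$ from the pushforward formula, together with $H_S^2=40$, adjunction, and Riemann--Hurwitz, carries out correctly to get $81$ and then $161$).
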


\begin{proof}
Since $V_5$ is general and $Y^2_A$ is smooth, so is    $Y^2_A\cap \P(V_5)$  by  Bertini's theorem.\ The conclusion then follows from Theorem~\ref{th38} and a direct computation of the genus using the numerical invariants of the smooth surface $Y^2_A$.\end{proof}

 \begin{coro}\label{cor311}
Let  $A\subset \bw3V_6$ be a Lagrangian subspace with no decomposable vectors and  such that $Y_A^3=\vide$.\ Let $X$ be the  GM sixfold associated with a  general hyperplane~$V_5\subset V_6$.\  

The threefold $F_2(X)$ is smooth irreducible and $\tilde\sigma\colon F_2(X)\to f_A^{-1}(Y_A\cap \P(V_5))$ is a $\P^1$-fibration.
\end{coro}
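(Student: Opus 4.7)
The plan is to reduce the statement to a short Bertini argument, with the main content already packaged in Theorem~\ref{th38} applied to $n=6$ together with Theorem~\ref{th310} and the identification made at the end of Section~\ref{se34}.

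First, I will invoke Theorem~\ref{th38} with $n=6$: it produces the factorization $\sigma = f_{A,V_5}\circ\tilde\sigma$ with $\tilde\sigma\colon F_2(X)\to \tY_{A,V_5}$ already a $\P^1$-bundle (in particular a $\P^1$-fibration) over an irreducible target. The identification recorded at the end of Section~\ref{se34} rewrites this target as $\tY_{A,V_5}=f_A^{-1}(Y_A\cap\P(V_5))$, matching the formulation of the corollary. Since a $\P^1$-bundle over a smooth irreducible base is itself smooth and irreducible, the corollary reduces to showing that $\tY_{A,V_5}$ is smooth for general $[V_5]$, irreducibility being already part of Theorem~\ref{th38}.

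For smoothness, the assumption $Y_A^3=\vide$ combined with Theorem~\ref{th310} gives that $\tY_A$ itself is smooth, since its singular locus $f_A^{-1}(Y_A^3)$ is empty. The morphism $\tY_A\to Y_A\hookrightarrow\P(V_6)$ pulls back the linear system of hyperplanes of $\P(V_6)$ to a base-point-free linear system on the smooth fourfold $\tY_A$, and a standard Bertini argument then yields that the hyperplane section $\tY_{A,V_5}=f_A^{-1}(Y_A\cap\P(V_5))$ is smooth for general $[V_5]$.

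There is essentially no obstacle once Theorems~\ref{th38} and~\ref{th310} and the identification of the restricted double cover from Section~\ref{se34} are in hand; the only step to verify is the base-point-freeness of the pulled-back linear system, which is immediate since the hyperplanes of $\P(V_6)$ have empty common base locus. All the genuinely difficult content—the construction of $\tY_A$ and $f_A$, the analysis of its singularities, the identification of $\tilde\sigma$ as a $\P^1$-bundle onto the restricted double cover—is already done upstream, so the corollary is a formal consequence.
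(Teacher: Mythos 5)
Your proposal is correct and follows essentially the same route as the paper: the paper's proof is exactly ``since $V_5$ is general, $\tY_{A,V_5}=f_A^{-1}(Y_A\cap\P(V_5))$ is smooth by Bertini's theorem, and the conclusion then follows from Theorem~\ref{th38}.'' You merely make explicit the two points the paper leaves implicit (that $Y_A^3=\vide$ and Theorem~\ref{th310} guarantee $\tY_A$ is smooth so that Bertini applies, and that a $\P^1$-bundle over a smooth irreducible base is smooth irreducible), which is a faithful expansion rather than a different argument.
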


\begin{proof}
Since $V_5$ is general and $\tY_A$ is smooth, so is   $\tY_{A,V_5}=f_A^{-1}(Y_A\cap \P(V_5))$  by  Bertini's theorem.\ The conclusion then follows from Theorem~\ref{th38}.\end{proof}

\begin{rema}\upshape\label{rem314}
When $X$ is a general GM threefold, $F_1(X)$ is a smooth connected curve of genus~$71$ (\cite[Theorem~4.2.7]{ip}), $Y^{2}_A\cap\P(V_5)$ is an irreducible curve with 10 nodes as singularities, and $\tilde\sigma\colon F_1(X) \to Y^{2}_A\cap\P(V_5)$ is its normalization.
\end{rema}

 \section{Periods of Gushel--Mukai varieties}\label{se4}

 \subsection{Cohomology and period maps of GM varieties}\label{se41}
With some work, the Hodge numbers of GM varieties can   be computed.\ The references for the next proposition are  \cite{dk0} and \cite[Propositions~3.1 and~3.4]{dk2}).

\begin{prop}\label{hn}
The integral cohomology of a     GM variety  of dimension $n$ is torsionfree and its
  Hodge diamond    is  
\begin{equation*} 
\begin{array}{cccccc}
(n=1) & (n=2) & (n = 3) & (n=4) & (n=5) & (n = 6) \\[1ex]  
\begin{smallmatrix}
 &1 \\
6 && 6 \\
& 1
\end{smallmatrix} &
\begin{smallmatrix}
&& 1 \\
 & 0&&0 \\
1&& 20&&1  \\
 & 0&&0 \\
&& 1 
\end{smallmatrix} &
\begin{smallmatrix}
 &&& 1 \\
  && 0 && 0  \\
&0&& 1 &&0 \\
 0&& 10 && 10 &&0  \\
&0&& 1 &&0 \\
  && 0 && 0  \\
 &&& 1 
\end{smallmatrix} &
\begin{smallmatrix}
&& &&&& 1 \\
&& &&& 0 && 0  \\
& &&&0&& 1 &&0 \\
&& &0&& 0 && 0 &&0  \\
&&0 &&1&& 22 &&1&&0 	\\
&& &0&& 0 && 0 &&0  \\
& &&&0&& 1 &&0 \\
&& &&& 0 && 0  \\
&& &&&& 1 
\end{smallmatrix} &
\begin{smallmatrix}
&& &&&& 1 \\
&& &&& 0 && 0  \\
& &&&0&& 1 &&0 \\
&& &0&& 0 && 0 &&0  \\
&&0 &&0&& 2 &&0&&0 \\
& 0&& 0 && 10 &&10&&0&&0  \\
&&0 &&0&& 2 &&0&&0 \\
&& &0&& 0 && 0 &&0  \\
& &&&0&& 1 &&0 \\
&& &&& 0 && 0  \\
&& &&&& 1 
\end{smallmatrix} &
\begin{smallmatrix}
&& &&&& 1 \\
&& &&& 0 && 0  \\
& &&&0&& 1 &&0 \\
&& &0&& 0 && 0 &&0  \\
&&0 &&0&& 2 &&0&&0 \\
& 0&& 0 && 0 &&0&&0&&0\\
  0&& 0 && 1 &&22&&1&&0&&0  \\
& 0&& 0 && 0 &&0&&0&&0\\
&&0 &&0&& 2 &&0&&0 \\
&& &0&& 0 && 0 &&0  \\
& &&&0&& 1 &&0 \\
&& &&& 0 && 0  \\
&& &&&& 1 
\end{smallmatrix} 
\end{array}
\end{equation*}
 \end{prop}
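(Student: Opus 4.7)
The plan is to split by dimension. For $n \in \{1, 2\}$, the statement reduces to classical facts about smooth Clifford-general genus-$6$ curves and Brill--Noether-general polarized K3 surfaces of degree~$10$, so I would focus on $n \geq 3$.

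I would first reduce to the ordinary case. For a special GM $n$-fold, $X_n$ is a smooth double cover of the smooth $n$-fold $M' := \Gr(2, V_5) \cap \P(W_{n+4})$ branched along an ordinary GM $(n-1)$-fold $X_{n-1}$. Rationally, the covering involution splits $H^\bullet(X_n, \Q)$ into an invariant part isomorphic to $H^\bullet(M', \Q)$ and an anti-invariant part whose Hodge structure is computable from that of $X_{n-1}$ via the Thom--Gysin sequence of the branched cover. This reduces every Hodge diamond (including the purely special case $n = 6$, for which $M' = \Gr(2, V_5)$ itself) to the ordinary cases, while integral torsion-freeness is transferred from $M'$ and $X_{n-1}$, with a separate verification that no $2$-torsion is introduced by the double cover.

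For ordinary $X$ of dimension $n \in \{3, 4, 5\}$, set $M = \Gr(2, V_5) \cap \P(W_{n+5})$, which is smooth of dimension $n+1$ by Lemma~\ref{l1}. Applying the Lefschetz hyperplane theorem twice (first to $M \hookrightarrow \Gr(2, V_5)$, then to the smooth quadric section $X = M \cap Q \hookrightarrow M$) gives
\begin{equation*}
H^k(X, \Z) \cong H^k(M, \Z) \cong H^k(\Gr(2, V_5), \Z) \qquad \text{for } k < n.
\end{equation*}
The cohomology of $\Gr(2, V_5)$ is torsion-free, concentrated in even degrees, of pure Hodge type $(k, k)$ in degree $2k$, with Schubert ranks $(1, 1, 2, 2, 2, 1, 1)$ in degrees $0, 2, \ldots, 12$. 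Poincar\'e duality together with the simple-connectedness of $X$ (inherited via Lefschetz from $\Gr(2, V_5)$) then determine $H^k(X, \Z)$ and its Hodge decomposition for $k > n$, with torsion-freeness intact.

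The remaining work is to determine the middle group $H^n(X)$. I would: (i) compute $\chi_{\mathrm{top}}(X) = \int_X c_n(T_X)$ via the adjunction sequences $0 \to T_X \to T_M|_X \to \cO_X(2) \to 0$ and $0 \to T_M \to T_{\Gr}|_M \to \cO_M(1)^{\oplus(5-n)} \to 0$, which pins down $b_n(X)$; (ii) apply Kodaira vanishing on the Fano variety $X$ to get $h^{p, 0}(X) = 0$ for $p > 0$, together with Kodaira--Akizuki--Nakano-style vanishing for the extremal middle Hodge pieces; (iii) decompose $H^n(X, \C) = H^n(X, \C)_{\mathrm{amb}} \oplus H^n(X, \C)_{\mathrm{prim}}$, where the ambient part is the image of $H^n(\Gr(2, V_5), \C)$ and is of pure Hodge type $(n/2, n/2)$, and compute the primitive middle Hodge numbers via the Koszul resolution of $\cO_X$ in $M$ combined with Bott's theorem on $\Gr(2, V_5)$. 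The main obstacle is step (iii): the combinatorial bookkeeping through the Koszul complex that separates, for example, the primitive $h^{2, 2}_{\mathrm{prim}}(X_4) = 20$ from the ambient $h^{2, 2}(M) = 2$ to give $h^{2, 2}(X_4) = 22$. Torsion-freeness of $H^n(X, \Z)$ then follows from Poincar\'e duality, torsion-freeness of all other cohomology groups, and the universal coefficient theorem.
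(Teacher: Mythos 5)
The paper does not actually prove this proposition: it is quoted from \cite{dk2} (Propositions~3.1 and~3.4), so your attempt can only be measured against the argument carried out there. Your outline is essentially the standard one and matches that reference in its main lines: weak Lefschetz for the complete intersection $X\subset M\subset\Gr(2,V_5)$ (together with its Poincar\'e-dual Gysin statement, which is what really controls $H^{n+1}$ and hence, via the universal coefficient theorem, the torsion of $H^n$) handles all degrees $k\ne n$; the topological Euler characteristic pins down $b_n$; Kodaira--Nakano vanishing kills $h^{p,0}$; and the remaining middle Hodge numbers come from Koszul resolutions and Borel--Bott--Weil on the Grassmannian, exactly as in \cite{dk2} and, for fourfolds, already in \cite{im}.

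Two caveats. First, essentially all of the content of the proposition sits inside your step (iii), which you describe but do not execute: knowing $b_4(X_4)=24$ and $h^{4,0}=0$ still leaves $h^{3,1}\in\{0,1,\dots\}$ undetermined, and the Koszul/Bott bookkeeping that yields $h^{3,1}=1$ (and likewise $h^{2,1}(X_3)=10$, etc.) is the actual proof; as written this is a plan rather than a proof. Second, your reduction of the special case is looser than stated: for a double cover $\pi\colon X_n\to M'$ branched along $X_{n-1}$, the anti-invariant part of $H^\bullet(X_n,\Q)$ is not computed from the Hodge structure of the branch divisor alone --- it is governed by groups of the form $H^q\bigl(M',\Omega^p_{M'}(\log X_{n-1})\otimes L^{-1}\bigr)$, which one relates to $X_{n-1}$ only via residue sequences plus further vanishing of twisted cohomology on both $M'$ and $X_{n-1}$ (compare: for a double plane branched along a sextic, the anti-invariant $H^2$ has Hodge numbers $(1,19,1)$ while the branch curve contributes $(10,10)$). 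For $n\le 5$ you can sidestep this entirely, since smooth special and ordinary GM $n$-folds are deformation equivalent by the irreducibility of the moduli space (Theorem~\ref{th26}), so Betti numbers, Hodge numbers and torsion subgroups coincide; only $n=6$, where every GM variety is special, genuinely requires the double-cover analysis, and there the target numbers (e.g.\ $h^{3,3}=22$, $h^{4,2}=1$) do not simply transcribe those of the branch fivefold.
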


In particular, only the  middle cohomology $H^n(X;\Z)$ is interesting: in other degrees, it is induced from the cohomology of $\Gr(2,V_5)$ via the Gushel map $\gamma\colon X\to \Gr(2,V_5)$.\ We define the {\em vanishing cohomology} as
 $$H^n(X;\Z)_{\rm van}\coloneqq \bigl(\gamma^*H^n(\Gr(2,V_5);\Z)\bigr)^\bot \subset H^n(X;\Z).$$
  The  Hodge numbers for the vanishing cohomology are therefore
  \begin{equation*} 
\begin{array}{ccccccccccc}
(n=1)&&(n=2)&&(n = 3)&&(n=4)&&(n=5)&&(n = 6) \\[1ex]  
\begin{smallmatrix}
6 && 6 
\end{smallmatrix} &&
\begin{smallmatrix}
1&& 19&&1 
\end{smallmatrix} &&
\begin{smallmatrix}
 0&& 10 && 10 &&0 
\end{smallmatrix} &&
\begin{smallmatrix}
0 &&1&& 20 &&1&&0 
\end{smallmatrix} &&
\begin{smallmatrix}
 0&& 0 && 10 &&10&&0&&0 
\end{smallmatrix} &&
\begin{smallmatrix}
  0&& 0 && 1 &&20&&1&&0&&0  
\end{smallmatrix} 
\end{array}
\end{equation*}
  In other words, 
  \begin{itemize}
\item  when $n\in\{3,5\}$, this Hodge structure has weight 1 and there is a 10 dimensional principally polarized intermediate Jacobian 
$$J(X)\coloneqq  H^n(X,\C)/\bigl( H^{(n+1)/2,(n-1)/2}(X)+H^n(X;\Z)\bigr)$$
 and a period map
\begin{eqnarray*}
\wp_n\colon \bcM_n&\lra& \cA_{10}\\
{[}X]&\longmapsto& [J(X)],
\end{eqnarray*}
where $ \cA_{10}$ is the moduli space for principally polarized abelian varieties of dimension~$10$, a  $55$ dimensional quasiprojective variety ;
\item when $n\in\{4,6\}$, this Hodge structure is of K3 type and there is a period map
\begin{eqnarray*}
\wp_n\colon \bcM_n&\lra& \cD\\
{[}X]&\longmapsto& [H^{n/2+1,n/2-1}(X)],
\end{eqnarray*}
where $\cD$ is a  $20$ dimensional quasiprojective variety 
 (the same for both $n=4
$ and $n=6$) which will be defined in Section~\ref{se45}.
\end{itemize}
None of these period maps  
 $$
\xymatrix
@C=20pt
@R=5pt
{\hbox{dim.}&&&&\hbox{dim.}\\
22& \bcM_3\ar[drr]^-{\wp_3}\\
24&\bcM_4\ar[drr]^(.34){\wp_4}&&\cA_{10}&55\\
25& \bcM_5\ar[urr]_(.34){\wp_5}&&\cD&20\\
25& \bcM_6\ar[urr]_-{\wp_6}
}
$$
 are injective.\ They are dominant when $n$ is even.\ To analyze them further, we need to go back to the double EPW sextics introduced in Section~\ref{se34}.  
 
 \begin{rema}[The Hodge conjectures for GM varieties] 
The {\em rational} Hodge conjecture holds for all GM varieties   of dimensions $n\ge3$ (\cite[Theorem~5.1]{fumo}).


The {\em integral} Hodge conjecture holds for all  GM varieties of dimension $n\in\{3,5\}$, because the Hodge classes all come from the Grassmannian.\ It also holds for   all  GM fourfolds (see \cite[Corollary~1.2]{perr}, \cite[Remark~1.6]{ppz}).\ It seems to be still unknown for $(3,3)$-classes on GM sixfolds (see \cite[Corollary~8.4]{perr} for a partial result).
%
\end{rema}

 \subsection{Double EPW sextics and hyper-K\"ahler fourfolds}
 
 The reason why O'Grady made the construction (explained in Section~\ref{se34}) of double EPW sextics is that they provide examples of hyper-K\"ahler fourfolds.
 
 \begin{defi}
A hyper-K\"ahler variety is a smooth, compact, simply connected, K\"ahler variety  whose space of holomorphic 2 forms is generated by a symplectic form.
\end{defi}

Hyper-K\"ahler varieties of dimension 2 are K3 surfaces.\ If $S$ is a K3 surface, it was shown by Beauville that the punctual Douady space $S^{[n]}\coloneqq \Hilb^n(S)$ parametrizing length $n$ subschemes of $S$ is a hyper-K\"ahler variety of dimension $2n$.\ The main result of \cite{og1} is the following (the double EPW sextic $\tY_A$ was defined in Theorem~\ref{th310}).

\begin{theo}
Let  $A\subset \bw3V_6$ be a Lagrangian  with no decomposable vectors and such that $Y^3_A=\vide$.\ The double EPW sextic $\tY_A$  is a   hyper-K\"ahler fourfold.
\end{theo}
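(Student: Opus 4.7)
The plan is to follow O'Grady (\cite{og1}) and verify, in turn, the properties in the definition of a hyperkähler fourfold: smoothness, compactness/Kählerness, simple connectedness, and existence of a generating nondegenerate holomorphic 2-form. Smoothness is immediate from Theorem~\ref{th310}, which states that $\tY_A$ is smooth away from $f_A^{-1}(Y_A^3)$ --- an empty set under our hypothesis $Y_A^3 = \vide$. Since $f_A\colon \tY_A \to Y_A$ is a finite morphism to the projective hypersurface $Y_A \subset \P(V_6)$, the variety $\tY_A$ is projective, in particular compact and Kähler.

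As a preliminary, I would show $\omega_{\tY_A} \isom \cO_{\tY_A}$. On the smooth locus $U := Y_A \moins Y_A^{\ge 2}$ of $Y_A$, whose complement has codimension $\ge 2$ in $Y_A$ by Theorem~\ref{th24}, the reflexive sheaf $\cR$ restricts to a genuine line bundle with $(\cR\vert_U)^{\otimes 2} \isom \cO_U$, so $f_A$ restricts there to an étale double cover. Adjunction for a degree-6 hypersurface of $\P^5$ gives $\omega_{Y_A}\vert_U \isom \cO_U$, and étaleness transports this to $\omega_{\tY_A}\vert_{f_A^{-1}(U)} \isom \cO$. Since $\tY_A$ is smooth and $\tY_A \moins f_A^{-1}(U)$ has codimension $\ge 2$, Hartogs' theorem extends the trivialization to all of $\tY_A$.

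The core step is then a deformation argument. The locus
$$\cU := \{[A] \in \LGr(\bw3V_6) \mid A \text{ has no decomposable vector and } Y_A^3 = \vide\}$$
is a Zariski open subset of the smooth, irreducible Lagrangian Grassmannian $\LGr(\bw3V_6)$: the first condition is open in $\LGr$, and upper-semicontinuity of $\dim(A \cap (v \wedge \bw2V_6))$ on $\LGr \times \P(V_6)$ makes the second one open as well. Over $\cU$, the double EPW sextics assemble into a smooth proper family, so by Ehresmann's theorem all fibers are diffeomorphic, and in particular $\pi_1(\tY_A)$ and all Hodge numbers $h^{p,q}(\tY_A)$ are constant over $\cU$. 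It therefore suffices to exhibit one point $A_0 \in \cU$ for which $\tY_{A_0}$ is a known hyperkähler fourfold: O'Grady constructs such an $A_0$ attached to a Brill--Noether-general polarized K3 surface $S$ of degree~10 and shows that $\tY_{A_0}$ lies in the deformation class of the Beauville Hilbert scheme $S^{[2]}$, which is hyperkähler with $h^{2,0} = 1$ and simply connected. Transporting along the family in $\cU$ then gives both the symplectic form on $\tY_A$ (a nowhere-degenerate generator of $H^0(\tY_A, \Omega^2)$, of which $h^{2,0} = 1$ forces it to be unique up to scale) and its simple connectedness.

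The main obstacle is the construction and analysis of the model $\tY_{A_0}$: one must verify both that the hypothesis $Y_{A_0}^3 = \vide$ holds and that $\tY_{A_0}$ actually deforms to $S^{[2]}$. This identification, carried out in \cite{og1}, rests on the detailed geometry of Lagrangians arising from K3 sections of $\Gr(2,V_5)$ and is the technical heart of O'Grady's paper; once in place, every remaining piece of the argument reduces to standard deformation theory plus the Hartogs-type extension used for the canonical bundle.
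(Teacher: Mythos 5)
The paper offers no proof of this statement: it is quoted verbatim from O'Grady's work (\cite{og1}), and the only proof-adjacent remark in the text is the subsequent observation that O'Grady showed $\tY_A$ to be a deformation of a Hilbert square $S^{[2]}$. Your outline is therefore not competing with an argument in the paper; it reconstructs O'Grady's strategy, and it does so correctly in its broad lines. Smoothness from Theorem~\ref{th310}, projectivity from finiteness of $f_A$, triviality of $\omega_{\tY_A}$ via adjunction on the smooth locus $U=Y_A\moins Y_A^{\ge 2}$ (where $\cR\otimes\cR\to\cO$ is an isomorphism, so $f_A$ is \'etale there) followed by extension across the codimension-$2$ set $f_A^{-1}(Y_A^{\ge 2})$, and constancy of $\pi_1$ and of the Hodge numbers over the connected open locus $\cU$ are all sound, and you are right to identify the construction of the model $A_0$ with $\tY_{A_0}$ deformation equivalent to $S^{[2]}$ as the genuine technical content, which you (like the paper) delegate to \cite{og1}.

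Two points deserve tightening. First, "the double EPW sextics assemble into a smooth proper family over $\cU$" is not free: one must build the sheaf $\cR$ (hence the algebra $\cO\oplus\cR$) in families and check flatness of the resulting relative $\bSpec$; this is exactly the kind of relative statement treated in \cite{dk4}, so it is true but should be cited or argued rather than asserted. Second, nondegeneracy of the generator of $H^0(\tY_A,\Omega^2)$ does not "transport" along the family by diffeomorphism alone: a priori $\sigma_A\wedge\sigma_A\in H^0(\omega_{\tY_A})\isom\C$ could vanish identically at special points of $\cU$. The clean way to close this is either to invoke the openness and closedness in smooth proper families of the property of being irreducible holomorphic symplectic, or, more self-containedly, to apply the Beauville--Bogomolov decomposition to each fiber: you have already established simple connectedness, $\omega_{\tY_A}\isom\cO_{\tY_A}$, and $h^{2,0}=1$, and these three facts force $\tY_A$ to be irreducible hyperk\"ahler (a product decomposition with more than one factor would contradict either $h^{2,0}=1$ or triviality of $\pi_1$). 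With that adjustment your sketch is a faithful and complete-in-outline account of the proof the paper leaves implicit.
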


Moreover, O'Grady proved that $\tY_A$  is a deformation of the Douady square $S^{[2]}$ of a K3 surface $S$, so that its Hodge numbers are known by a computation of Beauville: for $H^2( \tY_A)$, they are
  \begin{equation*} 
\begin{matrix}
1&& 21&&1  
\end{matrix} .
\end{equation*}
 Beauville also defined an integral, nondegenerate quadratic form $q_{BB}$ on $H^2( \tY_A;\Z)$.\ The class $h_A\coloneqq f_A^*\cO_{\P(V_6)}(1)$ is ample on $\tY_A$ with $q_{BB}(h_A)=2$, and we define the {\em primitive cohomology} as
 $$H^2(\tY_A;\Z)_0\coloneqq h_A^\bot \subset H^2(\tY_A;\Z).$$

The primitive cohomology gives rise to a period map
\begin{eqnarray*}
\wp\colon \EPW&\lra& \cD\\
{[}A]&\longmapsto& [H^{2,0}(\tY_A)],
\end{eqnarray*}
where $\cD$ is the  period domain already mentioned   in Section~\ref{se41} and which will be defined in Section~\ref{se45} (strictly speaking, this map is only defined on the complement of the hypersurface~$\Delta$ defined in~\eqref{defd}---where $\tY_A $ is smooth---but O'Grady showed in \cite{og4}  that it extends over~$\Delta$).\ By   Verbitsky's global Torelli Theorem and Markman's monodromy results, and \cite[Section~5.7]{og4}, {\em the map $\wp$ is an open embedding.}

  \subsection{Factorization of the period maps of GM varieties}

 We explain in this section that the period map $\wp_n\colon \bcM_n\to \textnormal{($\cA_{10}$ ou $\cD$)}$  for GM $n$folds, defined in Section~\ref{se41}, factors through the  surjective morphism
$$\pi_n\colon \bcM_n\lra \EPW$$
defined in Theorem~\ref{th26} that sends a GM variety $X$ of dimension $n$  to its associated Lagrangian $A\subset \bw3V_6$.

For that, we relate the Hodge structure~$H^n(X)_{\rm van}$   to the Hodge structure   $H^2(\tY_A)_0$ when $n\in\{4,6\}$, and to the Hodge structure   $H^1(\widetilde Y^2_A)$ when $n\in\{3,5\}$.

 \begin{theo}[\cite{dk2,dk5}] \label{th44}
 Let $X$ be a     GM variety of dimension $n\in\{3,4,5,6\}$ with associated Lagrangian $A$.\ We assume $Y^3_A=\vide$.\footnote{This hypothesis ensures that both   $\tY_A$ and $\widetilde Y^2_A$  are smooth.}
 
  \noindent  {\rm(a)} When  $n\in\{4,6\}$, there is an isomorphism of polarized Hodge structures
 $$(H^n(X;\Z)_{\rm van},\smile)\isomlra (H^2(\tY_A;\Z)_0,(-1)^{n/2-1}q_{BB}).$$

\noindent{\rm(b)} There is a canonical principal polarization on the abelian variety $ \Alb(\widetilde Y^2_A)$ and, when $n\in\{3,5\}$, there is an isomorphism  
 $$J(X)\isomlra \Alb(\widetilde Y^2_A)$$
 of principally polarized abelian varieties. 
\end{theo}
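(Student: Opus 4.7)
My plan is to reinterpret both sides as Hodge-theoretic invariants of geometry on $\P(V_5)$ that only depends on $A$, then globalize via Lefschetz restriction to obtain the intrinsic isomorphisms. The key tool is the quadric fibration $\rho\colon \P(\cU_X)\to\P(V_5)$ of \eqref{rho}: by Proposition~\ref{prop31}, its discriminant is (essentially) $Y_A\cap\P(V_5)$ and its discriminant double cover is the restriction $\tY_{A,V_5}$ of $f_A\colon \tY_A\to Y_A$; by Theorems~\ref{th38}--\ref{th39}, its relative Fano scheme of linear subspaces is the restriction of $g_A\colon \tY_A^2\to Y_A^2$. Since $f_A$ and $g_A$ come from $A$ alone, Lefschetz arguments for the smooth ambient covers $\tY_A$ and $\tY_A^2$ will promote local Hodge comparisons on $\P(V_5)$ to the claimed global isomorphisms.

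For part (a) with $n=4$, the fibration $\rho$ is a conic bundle over $\P^4$. The Leray spectral sequence of the $\P^1$-bundle $\P(\cU_X)\to X$ yields $H^4(X;\Z)_{\rm van}\isom H^4(\P(\cU_X);\Z)_{\rm van}$, and the standard spinor/Clifford construction for the conic bundle identifies the latter with a sub-Hodge structure of $H^2(\tY_{A,V_5};\Z)_0$. The Lefschetz hyperplane theorem applied to the hyperk\"ahler fourfold $\tY_A$ with respect to $h_A$ then gives $H^2(\tY_A;\Z)_0\isom H^2(\tY_{A,V_5};\Z)_0$, closing the loop. For $n=6$, where $X$ is special, I would pass to the base $\Gr(2,V_5)$ of the Gushel double cover and run a parallel argument with a suitable quadric fibration there, the extra $\P^1$ of the cover absorbing the dimension shift. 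The sign $(-1)^{n/2-1}$ on the polarization is forced by tracking cup product through the two correspondences; I expect the sign computation to be the main subtlety, and my plan is to pin it down by specializing to GM fourfolds containing a quintic del Pezzo surface (Example~\ref{ex34}) or a $\sigma$-plane (Example~\ref{ex35}), where both sides become directly computable in terms of an underlying K3 surface, and then to propagate by deformation invariance over $\EPW$.

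For part (b), the same strategy works one cohomological degree lower. The hypothesis $Y_A^3=\vide$ ensures that $\tY_A^2$ is smooth of general type with $h^1(\cO_{\tY_A^2})=10$, so $\Alb(\tY_A^2)$ is ten-dimensional as required. I would construct the canonical principal polarization on $\Alb(\tY_A^2)$ using the covering involution $\iota$ of $g_A$: namely, half of the intersection pairing on the antiinvariant part $H^1(\tY_A^2;\Z)^{-\iota}$. Unimodularity of this pairing requires $g_A$ to be \'etale, which is precisely the content of $Y_A^3=\vide$. For $n=5$, Corollary~\ref{cor3125} identifies $F_2(X)\to Y_A^2\cap \P(V_5)$ with the \'etale restriction of $g_A$; the Abel--Jacobi map for the family of $\sigma$-planes realizes $J(X)$ as the Prym of this cover, and Lefschetz restriction on the antiinvariant cohomology $H^1(\tY_A^2)^{-\iota}\hra H^1(\tY_{A,V_5}^2)^{-\iota}$ then yields the desired isomorphism $\Alb(\tY_A^2)\isom J(X)$ of ppav. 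The case $n=3$ I would deduce from the case $n=5$ via the birationalities of Theorem~\ref{th32}, applied to a pencil of hyperplanes in $Y_{A^\bot}^2$. The main obstacle will be verifying that my candidate polarization on $\Alb(\tY_A^2)$ is actually principal and matches the natural polarization on $J(X)$ through the Abel--Jacobi map; I expect this to reduce to a local unimodularity check at the ten nodes of $Y_A^2\cap \P(V_5)$ from Remark~\ref{rem314}, where the two pairings can be compared directly in local coordinates.
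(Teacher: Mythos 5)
Your part (a) is, in substance, the paper's own route: the ``spinor/Clifford'' data of the conic bundle $\rho$ is exactly the relative Hilbert scheme of lines, i.e.\ $F_1(X)\to\tY_{A,V_5}$ of Theorem~\ref{th39} (resp.\ $F_2(X)$ for $n=6$), and the paper replaces your Clifford-theoretic comparison --- which is not standard over a four-dimensional base --- by the Beauville--Donagi Abel--Jacobi map $p_*q^*\colon H^4(X;\Z)_{\rm van}\to H^2(F_1(X);\Z)$ combined with the Lefschetz isomorphism $H^2(\tY_A)\isom H^2(\tY_{A,V_5})$ and the small resolution $F_1(X)\to\tY_{A,V_5}$. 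The serious problems are in part (b). For $n=5$, the Abel--Jacobi map does \emph{not} realize $J(X)$ as the Prym of the \'etale double cover $F_2(X)\to Y^2_A\cap\P(V_5)$: by Corollary~\ref{cor3125} the genera are $161$ and $81$, so that Prym has dimension $80$, while $J(X)$ is $10$-dimensional. What the Clemens--Tjurin argument actually gives is a surjection $J(F_2(X))\thra J(X)$ with connected kernel, and Lefschetz gives a second surjection $J(F_2(X))\thra\Alb(\widetilde Y^2_A)$; the whole content of the theorem is that these two surjections have the \emph{same} kernel, a step your proposal never addresses. The paper closes this gap with a monodromy trick: for $V_5$ very general, $J(Y^2_A\cap\P(V_5))$ is simple of dimension $81$ (since $Y^2_A$ is regular), hence is contracted by both maps, and the residual kernel inside the $80$-dimensional Prym is again simple (of dimension $70$), which forces the two quotients to agree. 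Without an argument of this kind, ``Lefschetz restriction then yields the desired isomorphism'' is unjustified.

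Two further points. The case $n=3$ cannot be deduced from $n=5$ via Theorem~\ref{th32}: that theorem only compares GM varieties of the \emph{same} dimension, and a threefold and a fivefold attached to the same Lagrangian are never birational (the fivefold is rational, the threefold generically is not); the paper treats $n=3$ by a separate construction (an Abel--Jacobi map for a family of rational quartic curves parametrized by $\widetilde Y^2_A$, Logachev's surface of conics being an alternative). Your appeal to the ten nodes of Remark~\ref{rem314} also conflates the threefold picture (where $Y^2_A\cap\P(V_5)$ is nodal and $V_5$ is constrained to lie in $Y^2_{A^\perp}$) with the fivefold one (where $V_5$ is general and the section is smooth). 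Finally, ``half of the intersection pairing on $H^1(\widetilde Y^2_A;\Z)^{-\iota}$'' does not define a polarization: $\widetilde Y^2_A$ is a \emph{surface}, so there is no unimodular cup-product pairing on its $H^1$, and the canonical principal polarization on $\Alb(\widetilde Y^2_A)$ has to be produced by other means.
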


 When $n\in\{4,6\}$, the period map $\wp_n\colon \bcM_n\to \cD$  therefore  factors as
\begin{equation}\label{pmapse}
  \vcenter{
\xymatrix
@C=15pt
@R=5pt
@M=5pt
{\hbox{dim.}&&&&&&\hbox{dim.}\\
24&\bcM_4\ar@{->>}[drr]^-{\pi_4}\\
&&&\EPW\ar@{^{(}->}[rr]^-\wp&&\cD&20\\
25&\bcM_6\ar@{->>}[urr]_-{\pi_6}\\
}}
\end{equation}
where $\wp$ is  the (extended) period map  for double EPW sextics.\  When $n\in\{3,5\}$, the period map $\wp_n\colon \bcM_n\to \cA_{10}$  factors as 
\begin{equation}\label{pmapso}
  \vcenter{
\xymatrix
@C=17pt
@R=5pt
@M=5pt
{\hbox{dim.}&&&&&&\hbox{dim.}\\
22&\bcM_3\ar@{->>}[drr]^-{\pi_3}\\
&&&\EPW\ar[r]&\EPW/r\ar[r]^-{\mathsf{q}}&\cA_{10}&55\\
25& \bcM_5\ar@{->>}[urr]_-{\pi_5}\\
}}
\end{equation}
where $r$ is the (nontrivial) duality involution   of $\EPW$ (see \cite[Lemma~6.1]{dk5}).\ The morphism~${\mathsf{q}}$ is known to be unramified (\cite{dim}) and   is expected to be (generically) injective.

 \subsection{Sketches of proofs}
 
 The standard argument for this kind of results goes back to Clemens--Griffiths (\cite{cg}) and Beauville--Donagi (\cite{bedo}), who treated the case of a smooth cubic fourfold $W\subset \P(V_6)$ and its (smooth) hyper-K\"ahler fourfold of lines $F_1(W)\subset \Gr(2,V_6)$.\ In that case, there is an incidence diagram
 $$\xymatrix@R=5mm{
& I \ar[dl]_-q \ar[dr] ^-p\\
W && F_1(W),
}$$
where $p$ is the universal line (a $\P^1$-bundle) and $q$ is dominant and generically finite.\ Beauville and Donagi prove that the Abel--Jacobi map $p_*q^*\colon H^4(W;\Z)\to H^2(F_1(W);\Z)$ is an isomorphism of Hodge structures which induces an isometry between the primitive cohomologies \mbox{$(H^4(W;\Z)_0,\smile)$} and $( H^2(F_1(W);\Z)_0,-q_{BB})$.\ The two  ingredients used in this argument are:
\begin{itemize}
\item $F_1(W)$ parametrizes curves on $W$, so there is a correspondence between $W$ and $F_1(W)$;
\item $F_1(W)$ and $I$ are smooth, so one can define $p_*$ in singular cohomology.
\end{itemize}
We follow a similar approach, using Hilbert schemes of lines or $\sigma$-planes on GM varieties, depending on the dimension.\ In what follows, $X$ is a GM $n$fold with associated Lagrangian $A$.\ We assume $Y_A^3=\vide$.
\medskip

\noindent{\bf Dimension $4$.}
By Corollary~\ref{cor312},   for $[V_5]$ general in $Y_{A^\bot}$, the scheme of lines $F_1(X)$ is a smooth threefold  and $\tilde\sigma\colon F_1(X) \to f_A^{-1}(Y_A\cap \P(V_5))  $ is a small resolution.\ One shows that    the induced composition
$$a\colon H^2(\tY_A;\Z)\isomlra H^2(f_A^{-1}(Y_A\cap \P(V_5));\Z) \xrightarrow{\ \tilde \sigma^*\ } H^2(F_1(X);\Z)
$$
(where the first map is an isomorphism by the Lefschetz theorem) is injective.

 The Abel--Jacobi map $p_*q^*\colon H^4(X;\Z)_{\rm van}\to H^2(F_1(X);\Z)$ is also injective (as in \cite{bedo}) and induces an anti-isometry between $H^4(X;\Z)_{\rm van}$ and $a(H^2(\tY_A;\Z)_0 )\subset  H^2(F_1(X);\Z)$ (see \cite[Section~5.1]{dk2} for more details).

\medskip

\noindent{\bf Dimension $6$.}
For a general GM sixfold $X$, the proof is similar: one uses instead the smooth fourfold $F_2(X)$ parametrizing $\sigma$-planes contained  in $X$, and Corollary~\ref{cor311} (see \cite[Section~5.2]{dk2} for more details).

\medskip

\noindent{\bf Dimension $5$.}
By Corollary~\ref{cor3125},   for $[V_5]$ general in $Y_{A^\bot}$, the
curve  $F_2(X)$ is a connected double \'etale cover of a general genus $81$ hyperplane section of the smooth surface $Y^2_A\subset \P(V_6)$.\ A generalization of an old argument of Clemens (written by Tjurin in~\cite{tju}) shows that the corresponding Abel--Jacobi map $q_*p^*\colon H_1(F_2(X);\Z) \to H_5(X;\Z)  $ in homology  is surjective.\ It induces a surjective morphism
$$a\colon J(F_2(X))\lra J(X)$$
with connected kernel.\ By the Lefschetz theorem, there is  another surjective morphism
$$b\colon J(F_2(X))\lra \Alb (\widetilde Y^2_A )$$
with connected kernel.\ We want to show that the morphisms $a$ and $b$ are the same.

We use a  trick: it follows from Deligne--Picard--Lefschetz theory that since the surface $Y_A^2$ is regular, for a very general choice of hyperplane $V_5$, the Jacobian  $J(Y_A^2\cap \P(V_5))$ is simple (of dimension 81).\ It is therefore contracted by both $a$ and $b$, and this induces surjective morphisms
$$a'\colon \Prym\lra J(X) \qquad{\rm and}\qquad b'\colon \Prym\lra \Alb (\widetilde Y^2_A )$$
with connected kernels.\footnote{Here, $\Prym\coloneqq J(F_2(X))/ J(Y_A^2\cap \P(V_5))$ is the 80 dimensional  Prym variety associated with the  double \'etale cover of curves $ F_2(X)\to Y_A^2\cap \P(V_5)$.}\ Using again  monodromy arguments, one shows that the kernel of $b'$ is simple (of dimension 70).\ It is therefore contracted by $a'$, and this induces an isomorphism  
$$   \Alb (\widetilde Y^2_A )\isomlra J(X).$$
(See  \cite[Section~5]{dk5} for more details.)
 
\medskip

\noindent{\bf Dimension $3$.}
 By Remark~\ref{rem314}, lines on a  general GM threefold $X$ are parametrized by a smooth connected curve $F_1(X)$ of genus 71 which is the normalization of the singular curve $Y^2_A\cap \P(V_5)$ (the hyperplane $V_5$ is not general anymore), but we were unable to relate this curve to the surface~$\widetilde Y^2_A $.
 
 However, it was proved by Logachev in \cite{lo} that the Hilbert scheme  of conics contained in a general GM threefold  $X$ is the blowup of the smooth surface $\widetilde  Y^2_{A^\bot}$ at a point.\footnote{For a complete description of  the Hilbert scheme  of conics contained in any GM threefold, see \cite[Theorems~1.1 and~7.3]{dk6}.}\ This   gives an Abel--Jacobi map $\Alb (\widetilde Y^2_{A^\bot})\to J(X)$ which should be an isomorphism.\footnote{Note that by \cite[Theorem 1.1]{dk5}, the smooth surfaces $\widetilde  Y^2_{A}$ and $\widetilde  Y^2_{A^\bot}$ have isomorphic  Albanese varieties, although they are in general not isomorphic.}

 We   proceed differently in \cite{dk5} and prove instead that the Abel--Jacobi map associated with  a family of rational quartic curves parametrized by the surface 
 $\widetilde Y^2_A$ gives the desired isomorphism $\Alb (\widetilde Y^2_A)\isomto J(X)$  (see \cite[Section~4]{dk5} for more details).
 
 \subsection{Hodge special GM fourfolds}\label{se45}
 
 Let us first explain what the period domain $\cD$ is.\ For any~GM fourfold $X$, the unimodular cohomology lattice $(H^4(X;\Z),\smile)$ is isomorphic to $(1)^{\oplus 22} \oplus (-1)^{\oplus 2} $ and the vanishing cohomology lattice $(H^4(X;\Z)_{\rm van},\smile)$ to the 
  even lattice
  $$\Lambda\coloneqq E_8^{\oplus 2}\oplus \begin{pmatrix}0&1\\1&0\end{pmatrix}^{\oplus 2}\oplus \begin{pmatrix}2&0\\0&2\end{pmatrix},
 $$
  where $E_8$ is the  rank 8 positive definite even lattice.\ It has signature $(20,2)$.
 
 Similarly, for any Lagrangian $A$, the cohomology lattice $(H^2(\tY_A;\Z),q_{BB})$ is isomorphic to the lattice
   $$E_8(-1)^{\oplus 2}\oplus \begin{pmatrix}0&1\\1&0\end{pmatrix}^{\oplus 3}\oplus (-2),
 $$
and the primitive cohomology lattice $(H^2(\tY_A;\Z)_0,q_{BB})$ is isomorphic to $\Lambda(-1)$ (in accordance with Theorem~\ref{th44}(a)).

The   manifold
 $$\Omega\coloneqq \{\omega\in \P(\Lambda\otimes\C)\mid   (\omega\cdot \omega) = 0\ , (\omega \cdot \bar \omega) < 0\} $$
is a homogeneous space with two  components, $\Omega^+$ and $\Omega^-$, 
 both isomorphic to the 20 dimensional open complex manifold $\SO_0(20,2)/\SO(20)\times \SO(2)$, a bounded symmetric domain of type IV acted on by the isometry group~$  O(\Lambda)$ of the lattice $\Lambda$.\ The quotient
 $$\cD\coloneqq   \widetilde  O(\Lambda)\backslash \Omega^+,$$
where  $\widetilde  O(\Lambda)\subset O(\Lambda)$ is a subgroup of index 2 called the {\em stable orthogonal group,} has the structure of an irreducible quasiprojective variety of dimension~20  (see \cite[Section~5]{dim2} for more details).\ It is a period domain for both GM fourfolds and double EPW sextics.

The domain $\cD$ carries a nontrivial canonical involution~$r_\cD$ corresponding to the double cover $\cD\to  O(\Lambda)\backslash \Omega^+$.\ If $\wp\colon\EPW\to \cD$
is the (extended) period map
 for double EPW sextics, it is related to the duality involution $r$ of $\EPW$ defined in Section~\ref{se25} by the relation
 $$\wp\circ r=r_\cD\circ \wp$$
proved in \cite{og2}.

The fact that the period map
$\wp$
 is dominant easily implies that the Picard group of a very general EPW sextic $\tY_A$ is generated by the class of the polarization $h_A=f_A^*\cO_{\P( V_6)}(1)$.\ Indeed, for $ H^{1,1}(\tY_A)\cap H^2(\tY_A;\Z)_0$ to be nonzero, the corresponding period must be in one of the (countably many) loci    $\alpha^\bot \cap\cD$, for some nonzero $\alpha \in \Lambda  $.\ We label these loci as $\cD_d$, where~$d$ is the discriminant of the lattice $\alpha^\bot \subset \Lambda$, called the {\em nonspecial lattice.}\ The Picard group of a double EPW sextic whose period point is a very general point of $\cD_d$ 
 (more precisely,   it is not in any other~$\cD_{d'}$, for $d'\ne d$) has rank $2$: it is the saturation of the subgroup $\Z h_A\oplus \Z\alpha$ of~$H^2(\tY_A;\Z)$.

 The following result is \cite[Lemma~6.1 and Corollary 6.3]{dim2}.
 
 \begin{prop}
The locus $\cD_d$ is nonempty if and only if $d>0$ and
\begin{itemize}
\item either  $d\equiv 0 \pmod{4}$, in which case $\cD_d\subset \cD$ is an   irreducible hypersurface;
\item or $d\equiv 2 \pmod{8}$,  in which case $\cD_d\subset \cD$ is the union of two irreducible hypersurfaces $\cD'_d$ and $\cD''_d$, which are interchanged by the involution $r_\cD$.
\end{itemize}
\end{prop}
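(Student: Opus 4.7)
The plan is to translate everything into lattice theory on $\Lambda$ and apply Eichler's criterion. By construction, $\cD_d$ is the image in $\cD$ of the union $\bigcup_\alpha (\alpha^\perp \cap \Omega^+)$, where $\alpha$ runs over primitive vectors of $\Lambda$ with $\alpha^2>0$ and $|\!\disc(\alpha^\perp)| = d$. Each $\alpha^\perp\cap\Omega^+$ is a connected complex hypersurface of dimension $19$, so the irreducible components of $\cD_d$ correspond bijectively to the $\widetilde O(\Lambda)$-orbits on this set of vectors, and each component is a hypersurface.

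For nonemptiness and the congruence conditions, I would use the standard formula
\[
\disc(\alpha^\perp) \;=\; \frac{\alpha^2 \cdot \disc(\Lambda)}{\mathrm{div}_\Lambda(\alpha)^2},
\]
where $\mathrm{div}_\Lambda(\alpha)$ is the positive generator of $\alpha\cdot\Lambda$. Since $\disc(\Lambda)=4$ (from the summand $\langle 2\rangle^{\oplus 2}$) and the discriminant group $A_\Lambda\cong (\Z/2)^{\oplus 2}$ is $2$-elementary, the divisibility is $1$ or $2$. In the divisibility-one case, $d=4\alpha^2\in 8\Z_{>0}$. In the divisibility-two case, $\alpha^{*}:=\alpha/2$ represents a nonzero $2$-torsion class of $A_\Lambda$, and its discriminant-form value $q(\alpha^{*})\bmod 2\Z$ determines $(\alpha^{*})^{2}\bmod 2\Z$; a direct inspection of the three nonzero classes of $A_\Lambda$ shows $q(\alpha^{*})\in\{\tfrac12,\tfrac12,1\}\bmod 2\Z$, so $d=4(\alpha^{*})^2$ satisfies $d\equiv 2$ or $d\equiv 4\pmod 8$. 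Combining the three cases gives exactly the congruences $d\equiv 0\pmod 4$ or $d\equiv 2\pmod 8$, and the existence of such $\alpha$ for every such positive $d$ is checked by an explicit construction using the $U$-summands to adjust the square.

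For the number of components I would invoke Eichler's criterion: since $\Lambda$ has signature $(20,2)$ and contains two hyperbolic planes, two primitive vectors of positive square lie in the same $\widetilde O(\Lambda)$-orbit as soon as they have the same square and the same class in $A_\Lambda$. This gives one orbit, hence irreducibility of $\cD_d$, whenever $d\equiv 0$ or $4\pmod 8$ (only one residue class is available); but two orbits when $d\equiv 2\pmod 8$, corresponding to the two classes $\overline{e_1/2}$ and $\overline{e_2/2}$ in $A_\Lambda$, where $e_1,e_2$ span the $\langle 2\rangle^{\oplus 2}$-summand of $\Lambda$. To finish the $d\equiv 2\pmod 8$ case, I would observe that $r_\cD$ is induced by an element of $O(\Lambda)\setminus \widetilde O(\Lambda)$ acting nontrivially on $A_\Lambda$: one may take the isometry $\sigma$ that swaps $e_1$ and $e_2$, which exchanges $\overline{e_1/2}$ and $\overline{e_2/2}$ while fixing $\overline{(e_1+e_2)/2}$, and therefore swaps the two components $\cD_d'$ and $\cD_d''$.

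The main anticipated obstacle is the verification that the involution of $\cD$ produced by this specific $\sigma$ really coincides with $r_\cD$, rather than with some other coset representative of $O(\Lambda)/\widetilde O(\Lambda)$. This requires tracing back through the geometric definition of $r_\cD$ as coming from the duality $A\mapsto A^\bot$ on $\LGr(\bw3 V_6)$ together with the induced anti-isomorphism between the Beauville--Bogomolov lattices of $\tY_A$ and $\tY_{A^\bot}$, via the identification of $H^2(\tY_A;\Z)_0$ with $\Lambda(-1)$.
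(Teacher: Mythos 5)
Your argument is correct and is essentially the proof of the result the paper is quoting: the survey gives no proof here but cites \cite[Lemma~6.1 and Corollary~6.3]{dim2}, where the statement is established by exactly this Hassett-style computation (discriminant formula $d=\alpha^2\disc(\Lambda)/\mathrm{div}(\alpha)^2$ with $\disc(\Lambda)=4$ and $A_\Lambda\simeq(\Z/2)^2$, followed by Eichler's criterion to count orbits). One remark: your ``main anticipated obstacle'' is not actually an obstacle for this proposition, since $\widetilde O(\Lambda)$ is the kernel of $O(\Lambda)\to O(A_\Lambda)$ and is normal of index $2$, so every element of the nontrivial coset induces the \emph{same} involution of $\cD=\widetilde O(\Lambda)\backslash\Omega^+$ (namely the deck transformation $r_\cD$ of the double cover $\cD\to O(\Lambda)\backslash\Omega^+$) and acts on $A_\Lambda$ by exchanging the two classes of square $\tfrac12$; the geometric identification with the duality $A\mapsto A^\bot$ is only needed for the separate statement $\wp\circ r=r_\cD\circ\wp$, which is not part of this proposition.
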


It is also known that
\begin{itemize}
\item the image of the (extended) period map $\wp\colon\EPW\to \cD$ does not meet the hypersurface $\cD_2\cup\cD_4\cup\cD_8$  (\cite[Theorem~1.3]{og6}, \cite[Example~6.3]{dm}) and is conjectured to be equal to its complement $\cD\moins(\cD_2\cup\cD_4\cup\cD_8)$;
%
\item the image of the hypersurface $\Delta$ defined in \eqref{defd}   is the hypersurface  $\cD''_{10}$ (see Example~\ref{ex35a} below for a proof);

\item the rational map $ \bEPW\dra \cD$ induced by $\wp$ is defined at  general points of the hypersurface $\bEPW \moins \EPW$ corresponding to Lagrangians with decomposable vectors and the image of this hypersurface is the hypersurface  $\cD_{8}$ (see~\cite[Remark~5.29]{dk2}).
\end{itemize}

This whole discussion translates in terms of GM fourfolds.\footnote{It also applies to GM sixfolds, but we will not discuss that aspect here.}\ We say that a GM fourfold~$X$ is {\em Hodge-special} if its period is in $\bigcup_{d}\cD_d$.\ For $X$ nonHodge-special, we have 
$$ H^{2,2}(X) \cap H^4(X;\Z)=\gamma^*H^4(\Gr(2,V_5);\Z)  $$
(a lattice of rank~2)  and, if $X$ is very general in a hypersurface $\wp_4^{-1}(\cD_d)$, the lattice  $ H^{2,2}(X) \cap H^4(X;\Z)$ of Hodge $(2,2)$ classes has rank~$3$ and discriminant~$d$.

Because of diagram~\eqref{pmapse}, the images of the period maps $\wp$ and
$$\wp_4\colon \bcM_4\lra \cD$$
are the same.\ The examples below, which describe geometrically certain families of GM fourfolds whose periods dominate the hypersurfaces $ \cD''_{10}$, $ \cD'_{10}$, and $ \cD_{20}$ of $\cD$, are taken from~\cite{dim2} and~\cite[Remark~5.29]{dk2}.

\begin{exam}[GM fourfolds containing a $\sigma$-plane]\label{ex35a}
This is a continuation of Example~\ref{ex35}.\ Let 
  $P=\P(V_1\wedge V_4)$ be a $\sigma$-plane contained in a GM fourfold $X$.\ Its class in $\Gr(2,V_5)$ is the Schubert class $\sigma_{3,1}$ and one computes  (see  \cite[Section~7.1]{dim2}) the intersection numbers 
  $$(\gamma^*\sigma_{1,1}\cdot P)_X=(\sigma_{1,1}\cdot P)_{\Gr(2,V_5)}=0 \ ,\ \   
  (\gamma^*\sigma_2\cdot P)_X=(\sigma_2\cdot P)_{\Gr(2,V_5)}=1\ ,\ \ 
  (P\cdot P)_X=3     
  $$ of classes  in $H^4(X;\Z)$.\ It follows that the intersection form on the rank 3 sublattice 
  $$K\coloneqq \langle\gamma^*H^4(\Gr(2,V_5);\Z),[P]\rangle=\langle\gamma^*\sigma_{1,1},\gamma^*\sigma_2 ,[P]\rangle$$ is given by the matrix
  $$\begin{pmatrix}
  2&2&0 \\
  2&4&1 \\
  0&1&3
  \end{pmatrix},
  $$
  whose determinant is $10$.\ The orthogonal of $[P]$ in $H^4(X;\Z)_{\rm van}$ is the same as the orthogonal of the primitive sublattice~$K$ in the unimodular lattice $H^4(X;\Z)$, and they have the same discriminant as $K$.\ The period of $X$ therefore belongs to the hypersurface $\cD_{10}$, and more precisely to the component $\cD''_{10}$ (see the comment on how we label the two components after \cite[Corollary~6.3]{dim2}).\ Since the periods of these fourfolds are dense in the image of $\Delta$ in $\cD$ (see Example~\ref{ex35}), we see that this image is $\cD''_{10}$.
  \end{exam}

\begin{exam}[GM fourfolds containing a quintic del Pezzo surface]\label{ex34a}
This is a continuation of Example~\ref{ex34}.\ We consider  
quintic del Pezzo surfaces obtained as the intersection of $\Gr(2,V_5)$ with a $\P^5$; their class is $\sigma_1^4=3\sigma_{3,1} + 2\sigma_{2,2}$ in $\Gr(2,V_5)$.\ Let $X$ be a GM fourfold containing such a surface $S$.\ One  computes  (see \cite[Section~7.5]{dim2}) the intersection numbers 
  $$(\gamma^*\sigma_{1,1}\cdot S)_X=(\sigma_{1,1}\cdot \sigma_1^4)_{\Gr(2,V_5)}=2 \ ,\ \   
  (\gamma^*\sigma_2\cdot S)_X=(\sigma_2\cdot \sigma_1^4)_{\Gr(2,V_5)}=3\ ,\ \ 
  (S\cdot S)_X=5     
  $$ of classes  in $H^4(X;\Z)$.\ The intersection form on the rank 3 sublattice 
  $ \langle\gamma^*\sigma_{1,1},\gamma^*\sigma_2 ,[S]\rangle$ is therefore given by the matrix
  $$\begin{pmatrix}
  2&2&2 \\
  2&4&3 \\
  2&3&5
  \end{pmatrix},
  $$
  whose determinant is $10$.\ Again, the period of $X$   belongs to the hypersurface $\cD''_{10}$ (this was expected since the associated Lagrangians as the same as the Lagrangians associated with GM fourfolds containing a $\sigma$-plane).\ More precisely, GM fourfolds containing a quintic del Pezzo surface form a dense  subset of $\wp_4^{-1}(\cD''_{10})$.
\end{exam}

\begin{exam}[GM fourfolds containing a $\tau$-quadric surface] \label{qua}
A $\tau$-quadric surface $Q$ in $ \Gr(2,V_5)$  is a linear section of a $\Gr(2,V_4)$ for some $V_4\subset V_5$; its class  in $\Gr(2,V_5)$ is $\sigma_1^2\cdot \sigma_{1,1}=\sigma_{3,1}+\sigma_{2,2}$.\ If $X$ is a GM fourfold containing $Q$, one computes as above (see \cite[Section~7.3]{dim2})  that the intersection form on the rank 3 sublattice 
  $\langle\gamma^*\sigma_{1,1},\gamma^*\sigma_2 ,[Q]\rangle$ is given by the matrix
  $$\begin{pmatrix}
  2&2&1 \\
  2&4&1 \\
  1&1&3
  \end{pmatrix},
  $$
  whose determinant is $10$.\ This time however, the period of $X$   belongs to the hypersurface $\cD'_{10}$ and GM fourfolds containing a $\tau$-quadric surface form a dense  subset of $\wp_4^{-1}(\cD'_{10})$.\ According to Theorem~\ref{th32}, these fourfolds, like those containing a $\sigma$-plane, are all rational; this can also be seen directly by projecting $X$ from the $\P^3$ spanned by $Q$. 
\end{exam}

\begin{exam}\label{ex410}
The GM fourfolds constructed in Example~\ref{ex36} were shown in \cite{hs} to fill out a dense subset of $\wp_4^{-1}(\cD_{20})$.
\end{exam}

\subsection{K3 surfaces associated with GM fourfolds}\label{se46}

For some integers $d$, the nonspecial lattice is isomorphic to the primitive lattice of a polarized K3 surface (with the sign of the intersection form reversed), 
necessarily of degree $d$.\ The precise lattice-theoretic result is the following (\cite[Proposition~6.6]{dim2}).

\begin{prop}\label{p410}
The positive integers $d$ for which the nonspecial lattice is isomorphic to the (opposite of the) primitive cohomology lattice of a degree $d$ polarized K3 surface are the positive integers 
 $d\equiv 2$ or $ 4 \pmod{8}$ such that the only odd  primes that divide $d$ are $\equiv 1\pmod{4}$. 
\end{prop}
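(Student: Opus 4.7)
The strategy is to reduce the question to a comparison of discriminant forms and then invoke Nikulin's uniqueness theorem for even indefinite lattices. Write $N_d := E_8^{\oplus 2}\oplus U^{\oplus 2}\oplus \langle d\rangle$ for the opposite of the primitive cohomology lattice of a degree-$d$ polarized K3 surface; this is an even lattice of signature $(19,2)$ whose discriminant group is cyclic of order $d$, carrying the form $q_{N_d}(x)=x^2/d\pmod{2\Z}$. On the other hand, the nonspecial lattice $K_d$ is the orthogonal complement in $\Lambda$ of a primitive vector $\alpha\in\Lambda$ which, together with the polarization class, saturates to a rank-$2$ sublattice of discriminant $d$. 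It is even, of signature $(19,2)$ and rank $21$, and its discriminant group has at most $3$ generators (bounded by those of $\Lambda$).

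Both $N_d$ and $K_d$ have signature $(19,2)$ and rank $21\ge 2+\ell(A)$ for $A$ either discriminant group, so Nikulin's classification theorem says that each is determined up to isomorphism by its discriminant form. The proposition thus reduces to the statement that $q_{K_d}\cong q_{N_d}$ if and only if the stated congruences on $d$ hold.

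To compute $q_{K_d}$, I would apply the standard formula for discriminant forms under orthogonal complementation: the finite-index inclusion $\langle\alpha\rangle\oplus K_d\hookrightarrow\Lambda$ glues $q_{\langle\alpha\rangle}$ and $q_{K_d}$ (with appropriate signs) through an isotropic subgroup whose quotient reproduces the discriminant form of $\Lambda$, namely the orthogonal sum of two copies of $(\Z/2\Z,\,x\mapsto x^2/2)$ coming from the last summand of $\Lambda$. A case analysis according to whether $d\equiv 2\pmod 8$ or $d\equiv 0\pmod 4$ (the two families produced by the nonemptiness criterion of the preceding proposition) then determines the isomorphism class of $q_{K_d}$: for a generic period in $\cD_d$ the underlying group is cyclic of order $d$ and the form takes the shape $x\mapsto ax^2/d\pmod{2\Z}$ for an explicit integer $a$ coprime to the relevant part of $d$.

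The main obstacle is then the arithmetic comparison of $q_{K_d}$ and $q_{N_d}$, which by the Chinese remainder theorem splits as a local question at each prime $p\mid d$. At an odd prime $p$, the isomorphism classes of nondegenerate rank-one forms on $\Z/p^k\Z$ are classified by a single Legendre-symbol invariant, and the comparison amounts to asking whether a fixed integer coming from the previous step is a square modulo $p$; unwinding this gives the condition $-1\in(\F_p^\ast)^2$, that is, $p\equiv 1\pmod 4$. At the prime $2$, the classification of quadratic forms on $\Z/2^k\Z$ is finer and one must track invariants modulo~$8$: a direct enumeration of the possible shapes of $q_{K_d}$ at~$2$, together with those of $q_{N_d}$, pins down $d\equiv 2$ or $4\pmod 8$ as exactly the residues for which the two forms can coincide. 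Assembling the local conditions yields the criterion.
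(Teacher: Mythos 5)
Your proposal is correct and takes essentially the same route as the proof the paper relies on (the statement is quoted from [DIM2, Proposition~6.6], whose argument is exactly this one): reduce to a comparison of discriminant forms via Nikulin's uniqueness theorem for even indefinite lattices of rank at least $\ell(A)+2$, compute the discriminant form of $\alpha^\perp\subset\Lambda$ by gluing, and check the local conditions, which at an odd prime $p\mid d$ amount to $\left(\tfrac{-1}{p}\right)=1$, i.e.\ $p\equiv 1\pmod 4$, and at $2$ to $d\equiv 2$ or $4\pmod 8$. (Only a cosmetic slip: the relevant saturated sublattice of Hodge classes has rank $3$ on the GM fourfold, not rank $2$; this does not affect the argument.)
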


The first values of $d$ that satisfy the conditions of the proposition
 and for which the hypersurface $\cD_d$ meets the image of the period map are
$10$, $20$, and $26$.

Let $X$ be a   GM fourfold whose period point belongs to a hypersurface $\cD_d$, for some positive integer $d$ satisfying the conditions of   Proposition~\ref{p410}.\ Since the period map for semipolarized K3 surfaces is bijective, the period point of $X$ in $\cD$ is the period point of a unique K3 surface~$S$ with a nef  (ample when the period of $X$ is general in $\cD_d$) algebraic class $h$ such that $h^2=d$, so that the polarized Hodge structures $K^\bot$  and $H^2(S)_0(-1)$ are isomorphic.\ We say that the K3 surface $S$ is {\em associated} with $X$.\ By Theorem~\ref{th44}, it only depends on the Lagrangian associated with $X$.


\begin{exam}\upshape
In some cases,   the associated K3 surface can be recovered geometrically from the GM fourfold $X$.\ This is the case in Example~\ref{ex35}, and here is another example: when $X$ is general in $\wp_4^{-1}(\cD'_{10})$, it contains a $\tau$-quadric surface $Q$ (see Example~\ref{qua}) and it was shown in  the proof of \cite[Proposition~7.3]{dim2} that the projection $\Bl_QX\to \P^4$ from the $\P^3$ spanned by $Q$ is the blowup of the surface in  $\P^4$ obtained as  the projection of a degree 10 K3 surface $S\subset \P^6$ from the line joining two points of~$S$.\ 

This surface~$S$ is the K3 surface associated with $X$.\ It can be constructed in several other ways:  the dual $A^\bot$ of the Lagrangian $A$ associated with $X$ is in the hypersurface $\Delta$ defined in~\eqref{defd}, so that $Y_{A^\bot}^3\ne\vide$.\ Since $X$ is general in $\wp_4^{-1}(\cD'_{10})$, the set $Y_{A^\bot}^3$ consists of only one point and the associated GM variety has dimension 2 (see Theorem~\ref{th22}): this is the K3 surface~$S$ again; finally, a small resolution of the singular double EPW sextic $\tY_{A^\bot}$ is isomorphic to the Hilbert square $S^{[2]}$ (this is all explained in~\cite{og4}).
\end{exam}

The reason why this strange notion of associated K3 surface is interesting lies in the following conjecture, originally envisioned for cubic fourfolds by Harris--Hassett.

\begin{conj}\label{co413}
A GM fourfold is rational if and only if it has an associated K3 surface.
\end{conj}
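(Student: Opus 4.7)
The statement is a conjecture rather than a theorem, so any honest plan must acknowledge that both directions are open (and that the analogous Harris--Hassett conjecture for cubic fourfolds has resisted proof for more than a decade). The strategies available for the two implications are very different, so I would attack them separately.

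For the ``if'' direction---existence of an associated K3 surface implies rationality---my plan would be geometric, working one component of $\cD_d$ at a time, extending the list of rational loci established in Examples~\ref{ex34}--\ref{ex36} and~\ref{ex34a}--\ref{ex410} so as to cover every irreducible component of $\wp_4^{-1}(\cD_d)$ allowed by Proposition~\ref{p410}.\ The template in those examples is uniform: locate a surface $T \subset X$ whose class, together with $\gamma^*\sigma_{1,1}$ and $\gamma^*\sigma_2$, spans a rank-3 sublattice of $H^4(X;\Z)$ of the correct discriminant $d$, then show that linear projection $\Bl_T X \dra \P^4$ (or onto a known rational fourfold) is birational.\ Because, by Theorem~\ref{th32}, rationality depends only on the associated Lagrangian, it suffices to produce one such $X$ per irreducible locus, and a deformation/specialization argument---assuming rationality is deformation-invariant in smooth families, itself a nontrivial input---would then propagate the property to a dense open subset.\ The candidate surfaces on $X$ would be transported from explicit linear systems on the associated K3 surface $S$ via the correspondence in Section~\ref{se46}, in the spirit of Example~\ref{ex410} for~$\cD_{20}$.

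For the ``only if'' direction---rationality implies the existence of an associated K3 surface---the natural strategy is Voisin's decomposition of the diagonal: show that for a very general nonHodge-special GM fourfold, no decomposition of $\Delta_X$ in $\mathrm{CH}_4(X\times X)$ of the form $X \times \mathrm{pt} + Z$ exists (with $Z$ supported on $D\times X$ for a divisor $D$), hence $X$ is not even stably rational.\ One would look for an integral cohomology class on $X$---likely coming from the integral Hodge conjecture obstructions on double EPW sextics via Theorem~\ref{th44}(a)---that is not universally supported on a divisor, together with a degeneration to a mildly singular GM fourfold for which the obstruction is visible.\ A complementary categorical approach follows Kuznetsov--Perry: the K3 subcategory $\cK(X)$ of $\Db(X)$ from Section~\ref{se5} is geometric (equivalent to $\Db(S)$ for a K3 surface $S$) if and only if $X$ has an associated K3 in the Hodge-theoretic sense; one would then need to establish, as for cubic fourfolds, that rationality forces $\cK(X)$ to be geometric.

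The main obstacle is the ``only if'' direction.\ The decomposition-of-the-diagonal machinery, despite strenuous effort in the cubic case, has so far produced only pointwise irrationality statements and never an irrationality result on a Zariski open subset of a moduli space; transplanting it to GM fourfolds is unlikely to close the conjecture by itself.\ The ``if'' direction, while apparently more tractable, still requires handling components of $\cD_d$ such as $\cD_{26}$ for which no geometric surface yet plays the role that $\sigma$-planes, $\tau$-quadrics, and quintic del Pezzo surfaces play in $\cD_{10}$; here one must guess the right family of surfaces on the associated K3, lift them through the Kuznetsov equivalence, and verify that they give a rational projection---a passage from categorical to biregular data that is exactly what remains mysterious.
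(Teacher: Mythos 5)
This statement is a conjecture, and the paper contains no proof of it: the surrounding text states explicitly that not a single irrational GM fourfold is known, so there is no ``paper's own proof'' to compare your attempt against. You correctly recognize this, and what you have written is a research programme rather than a proof; judged as such it is a reasonable survey of the available strategies, but both halves run into concrete obstructions that should be named.

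For the ``only if'' direction, the decomposition-of-the-diagonal approach is blocked before it starts: every smooth GM fourfold admits a degree-$2$ unirational parametrization $\P^4\dra X$ (\cite[Proposition~3.1]{dim2}, quoted in Section~\ref{se3}), so twice the diagonal always decomposes in the Chow group and every obstruction of odd order vanishes identically for \emph{all} GM fourfolds; moreover the loci $\wp_4^{-1}(\cD_d)$, with $d$ as in Proposition~\ref{p410}, form a countable dense union of hypersurfaces in $\bcM_4$, so any obstruction compatible with specialization would be forced to vanish at the very general point as well --- this is precisely why the method has produced nothing here or in the cubic case. For the ``if'' direction, your propagation step is a genuine gap: rationality is not known to be deformation-invariant in smooth families (and stable rationality is known \emph{not} to be), so exhibiting one rational fourfold per component of $\wp_4^{-1}(\cD_d)$ does not yield rationality on a dense open subset; the paper's Examples~\ref{ex34}, \ref{ex35}, \ref{ex36} and~\ref{qua} instead prove rationality for \emph{every} member of the relevant family by an explicit construction (projection from a surface, or Theorem~\ref{th32} applied to the associated Lagrangian), which is the standard one must meet. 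Finally, Proposition~\ref{p410} allows infinitely many admissible $d$, so a case-by-case hunt for surfaces to project from cannot terminate without a uniform mechanism --- which is exactly what the categorical picture of Section~\ref{se5} is hoped to supply, and what remains open even for cubic fourfolds.
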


This conjecture implies in particular that a very general GM fourfold is not rational.\ We will discuss it at the end of the next section.
 
 \section{Derived categories}\label{se5}
 
 For a very nice geometrically oriented introduction to derived categories, I refer to \cite{huy}.\ For a very detailed series of lecture notes on the topics that concern us here, but in the case of cubic fourfolds, I refer to \cite{kuz}.\ I will only give here a very brief and superficial overview of  the situation for GM varieties.
 
 \subsection{Semiorthogonal decompositions}
 
 Let $\cT$ be a ($\C$-linear) triangulated category.\ We say that a pair $(\cA_1,\cA_2)$ of full triangulated subcategories of $\cT$ forms a (two-step) {\em semiorthogonal decomposition} (s.o.d.\ for short), and we write
 $$\cT=\langle \cA_1,\cA_2\rangle,$$
  if
 \begin{itemize}
\item $\Hom(A_2,A_1) = 0$ for any $A_1\in \cA_1$ and $ A_2\in\cA_2$;
\item $\cA_1$ and $\cA_2$ generate $\cT$, in the sense that for any $T\in\cT$, there is a (unique and functorial) distinguished triangle
$$A_2\to T \to A_1\to A_2[1],$$
with $A_1\in \cA_1$ and $ A_2\in\cA_2$.
\end{itemize}
In that case, one shows (\cite[Lemma~2.2]{kuz}) that $\cA_2$ is a {\em right admissible subcategory} of $\cT$: the inclusion functor $\alpha_2\colon \cA_2\to \cT$ has a right adjoint $\alpha^!_2\colon   \cT\to \cA_2$, given by $T\mapsto A_2$, that satisfies $\alpha^!_2\circ\alpha_2\isom \Id_{\cA_2}$.

Conversely,   any right admissible triangulated functor $\alpha\colon \cA\to \cT$ is fully faithful and there is an s.o.d.
$$\cT=\langle \cA^\perp,\cA\rangle,$$
where $\cA^\perp\coloneqq \Ker(\alpha^!)$ is the {\em right orthogonal} of $\cA$.
 
 This terminology can be generalized to several factors: the equality
 $$\cT=\langle \cA_1,\dots,\cA_r\rangle$$
 means
  \begin{itemize}
\item[(a)] $\Hom(A_j,A_i) = 0$ for any $A_i\in \cA_i$, $ A_j\in\cA_j$, and $i<j$ (no morphisms from right to left);
\item[(b)] $\cA_1,\dots,\cA_r$ generate $\cT$ (this is defined by induction on $r$).
\end{itemize}

A useful example of right admissible subcategory is the following.\ Let $E$ be an object   of~$\cT$ and let $\langle E\rangle$ be the subcategory  of $\cT$ generated by $E$: this is the smallest full triangulated
subcategory of $\cT$ containing $ E$.\ 
If $E$
 is {\em exceptional,} that is, if
$$\Ext^\bullet(E,E)=\C,$$
(this means $\Hom(E,E[m])=\C$ if $m=0$, and $0$ otherwise), the category $\langle E\rangle$ is admissible (\cite[Lemma~1.58]{huy}).\ In particular, we have an s.o.d.
$\cT=\langle \langle E\rangle^\perp,\langle E\rangle\rangle$ which we write simply as
$$\cT=\langle E^\perp,E\rangle.$$

Finally, exceptional objects $E_1,\dots,E_r$  form an {\em exceptional collection} if the subcategories that they generate satisfy   condition (a) above, which means in this case
$\Hom(E_j,E_i[m])=  0$ for all $m$ and all $i<j$.\ There is then an s.o.d.
$$\cT=\langle  \langle E_1,\dots, E_r\rangle^\perp, E_1\dots,E_r\rangle.$$

 \subsection{Examples of semiorthogonal decompositions and exceptional collections}

We will apply these concepts in the following categories: if
  $X$ is a smooth projective (complex) variety,  the  derived category $\Db(X)$ of the category of bounded complexes of coherent sheaves on $X$ is a triangulated category.

 On the projective space $\P^n$, each line bundle $\cO_{\P^n}(m)$ is exceptional and Beilinson proved that there is an s.o.d.\ (\cite[Corollary 8.29]{huy})
$$\Db(X)=\langle \cO_{\P^n},\cO_{\P^n}(1),\dots,\cO_{\P^n}(n)\rangle.$$
Beilinson's result was extended by  Kapranov  to all Grassmannians.

 In the case of $G\coloneqq \Gr(2,V_5)$ which interests us here, Kuznetsov produced the  s.o.d.
$$\Db(G)=\langle \cO_{G},\cU^\vee,\cO_{G}(1),\cU^\vee(1),\dots,\cO_{G}(4),\cU^\vee(4)\rangle$$
(\cite[Section~6.1]{kuh}), where $\cU$ is the rank 2 tautological subbundle on $G$.\ For 
  smooth linear sections  $M\coloneqq \Gr(2,V_5)\cap \P(W_{n+5})$ of $G$ of dimension $n+1\ge 4$, he showed
 $$\Db(M)=\langle \cO_{M},\cU_M^\vee,\cO_{M}(1),\cU_M^\vee(1),\dots,\cO_{M}(n-1),\cU_M^\vee(n-1)\rangle.$$
 Finally, for any GM variety $X$ of dimension $n\ge 3$, he obtained with Perry (\cite[Proposition~2.3]{kp}) that
$$\cO_{X},\cU_X^\vee,\cO_{X}(1),\cU_X^\vee(1),\dots,\cO_{X}(n-3),\cU_X^\vee(n-3)$$
form an exceptional collection in $\Db(X)$.\ If one defines
$$\cA_X\coloneqq \langle \cO_{X},\cU_X^\vee,\cO_{X}(1),\cU_X^\vee(1),\dots,\cO_{X}(n-3),\cU_X^\vee(n-3)\rangle^\bot,$$
the right orthogonal of this exceptional collection, we obtain an s.o.d.
 $$\Db(X)=\langle \cA_X,\cO_{X},\cU_X^\vee,\cO_{X}(1),\cU_X^\vee(1),\dots,\cO_{X}(n-3),\cU_X^\vee(n-3)\rangle.$$
We say that the triangulated category $\cA_X$ is associated with   the GM variety $X$.

 \subsection{Serre functors and K3/Enriques categories}
 
 A {\em Serre functor} for a triangulated category $\cT$ is an equivalence $\Se_\cT\colon \cT\isomto \cT$ with bifunctorial isomorphisms
 $$\Hom(F,\Se_\cT(G))\isomlra \Hom(G,F)^\vee
 $$
 for all  $F,G\in \cT$.\ If a Serre functor exists, it is unique (up to functorial isomorphisms), so it is an invariant of the category.
 
 If $X$ is a smooth projective variety of dimension $n$, the category $\Db(X)$ has a Serre functor given by
 $$\Se_{\Db(X)}(F)=(F\otimes \omega_X)[n],$$
where $\omega_X$ is the dualizing sheaf of $X$.\ In particular, if $\omega_X$ is trivial (one often says that $X$ is a {\em Calabi--Yau variety}), the Serre functor is just the shift by the dimension of $X$.
 
Bondal and Kapranov  proved that an admissible subcategory of a triangulated category that has a Serre functor also has a Serre functor and Kuznetsov computed it in several interesting examples.\ The following is \cite[Proposition~2.6]{kp}.
 
 \begin{prop}
Let $X$ be a GM variety of dimension $n\ge 3$.
\begin{itemize}
\item If $n$ is even, the associated category $\cA_X$ is a {\em K3 category}: it has a Serre functor given by the shift $[2]$.
\item If $n$ is odd, the associated category $\cA_X$ is an {\em Enriques category}: it has a Serre functor $\sigma\circ[2]$, where $\sigma\colon \cA_X\isomto \cA_X$ is a nontrivial involution.
\end{itemize}
\end{prop}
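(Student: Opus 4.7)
The plan is to use the standard Kuznetsov machinery for computing the Serre functor on an admissible subcategory via mutations through the complementary exceptional collection. Write
$$\cE:=\langle\cO_X,\cU_X^\vee,\cO_X(1),\cU_X^\vee(1),\dots,\cO_X(n-3),\cU_X^\vee(n-3)\rangle,$$
so that $\Db(X)=\langle\cA_X,\cE\rangle$. The inclusion $\alpha\colon\cA_X\hookrightarrow\Db(X)$ admits a right adjoint $\alpha^!\colon\Db(X)\to\cA_X$ given by the left mutation $\mathbf{L}_\cE$ through $\cE$, and general nonsense gives $\Se_{\cA_X}=\alpha^!\circ\Se_{\Db(X)}\circ\alpha$. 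Since $X$ is Fano of index $n-2$, $\omega_X\cong\cO_X(2-n)$, so $\Se_{\Db(X)}(F)=F(2-n)[n]$, hence for $A\in\cA_X$,
$$\Se_{\cA_X}(A)=\mathbf{L}_\cE\bigl(A(2-n)\bigr)[n].$$
The problem is thus reduced to an explicit computation of this mutation.

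The key structural input is the \emph{Lefschetz} (rectangular) shape of the decomposition: the two-object block $(\cO_X,\cU_X^\vee)$ repeats $n-2$ times, each occurrence being the twist of the previous by $\cO_X(1)$. The plan is to compute $\mathbf{L}_\cE(A(2-n))$ iteratively, peeling off one block at a time from the right. At each step, the mutation of the rightmost block past the remaining collection is controlled by the tautological exact sequence $0\to\cU_X\to V_5\otimes\cO_X\to\cU_X^\vee\to 0$ inherited from $\Gr(2,V_5)$ together with the index condition $\omega_X\cong\cO_X(2-n)$; these allow one to identify the images of $\cO_X(n-2-i)$ and $\cU_X^\vee(n-2-i)$ under successive mutations with shifts of $\cO_X(-i)$ and $\cU_X^\vee(-i)$.

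For $n$ even, the $n-2$ peeling steps combine with the $[n]$ coming from the Serre functor of $\Db(X)$ to yield precisely the shift $[2]$, so that $\cA_X$ is a K3 category. For $n$ odd, the same computation yields $[2]$ composed with an autoequivalence $\sigma\colon\cA_X\isomto\cA_X$ that cannot be absorbed into a shift; its origin is the unpaired swap between $\cO_X(i)$ and $\cU_X^\vee(i)$ left over in the middle block after the parity-mismatched peeling, with no partner block to cancel it. The involutivity $\sigma^2=\Id$ then follows by comparing $\Se_{\cA_X}^2$, computed once as $(\sigma\circ[2])^2=\sigma^2\circ[4]$ and once by running the same mutation argument applied to $\Se_{\Db(X)}^2(F)=F(2(2-n))[2n]$, which must yield $[4]$ on $\cA_X$.

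The main obstacle is the bookkeeping of the successive mutations: one must verify, using the Koszul-type resolutions arising from the Pl\"ucker embedding and the extra defining quadric cutting out $X$ inside the linear section of $\Gr(2,V_5)$, that each peeling step produces exactly the shift and twist claimed, and that the relevant $\Ext$-groups between adjacent blocks $(\cO_X(i),\cU_X^\vee(i))$ and $(\cO_X(i+1),\cU_X^\vee(i+1))$ behave so that the mutation formulas reduce cleanly. A separate but parallel check is required in the special (double-cover) case, where $\cU_X$ is pulled back from the associated ordinary GM variety and the exact sequence above is replaced by its pullback, but the Lefschetz structure of $\cE$ persists, so the same iterative argument applies.
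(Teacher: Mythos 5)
The paper does not actually prove this proposition: it quotes it from \cite[Proposition~2.6]{kp}, so there is no internal proof to match your attempt against. Judged on its own terms, your plan has the right general shape (Bondal--Kapranov: an admissible subcategory inherits a Serre functor, computed by projecting $\Se_{\Db(X)}$ back into $\cA_X$), but it stumbles at the first step and then leaves the entire mathematical content of the statement unproved. On the first point: $\mathbf{L}_\cE$ is the \emph{left} adjoint $\alpha^*$ of the inclusion $\alpha\colon\cA_X=\cE^\perp\hookrightarrow\Db(X)$, not the right adjoint $\alpha^!$; the correct identity is $\Se_{\cA_X}^{-1}=\mathbf{L}_\cE\circ\Se_{\Db(X)}^{-1}\big|_{\cA_X}$, that is, $\Se_{\cA_X}^{-1}(A)=\mathbf{L}_\cE\bigl(A(n-2)\bigr)[-n]$. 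The functor $\mathbf{L}_\cE\bigl(A(2-n)\bigr)[n]$ you write down is not the Serre functor: already for $\Db(\P^1)=\langle\cO,\cO(1)\rangle$ and $\cA=\langle\cO\rangle$ the analogous expression sends $\cO$ to $\cO^{\oplus3}[1]$ rather than to $\cO=\Se_\cA(\cO)$. A smaller slip: $0\to\cU_X\to V_5\otimes\cO_X\to\cU_X^\vee\to0$ is not the tautological sequence; the quotient of $V_5\otimes\cO_X$ is the rank-$3$ quotient bundle, not the rank-$2$ bundle $\cU_X^\vee$.

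More decisively, the assertion that ``the $n-2$ peeling steps combine with $[n]$ to yield precisely the shift $[2]$'' \emph{is} the theorem, and the sketch supplies no mechanism that would prove it. Even in the model case of a cubic fourfold, the individual rotations $\mathbf{L}_{\langle\cO(i)\rangle}(-\otimes\cO(1))$ are \emph{not} shifts on the residual category --- only their third power is --- and establishing that requires a global argument, not blockwise bookkeeping; the same is true here. Your description of $\sigma$ as ``the unpaired swap between $\cO_X(i)$ and $\cU_X^\vee(i)$ left over in the middle block'' does not define a functor, and the argument for $\sigma^2\cong\Id$ is circular: you assert that the mutation computation applied to $\Se_{\Db(X)}^2$ ``must yield $[4]$'', which is equivalent to what is being proved. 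Finally, nothing in the proposal addresses the \emph{nontriviality} of $\sigma$ for odd $n$, which is the substantive half of the claim that $\cA_X$ is an Enriques rather than a K3 category; that requires extra input (for instance a Hochschild homology computation, or an object on which $\sigma$ acts nonidentically). The missing ingredient throughout is the quadratic structure of $X$ --- its realization as a quadric section of (a cone over) a linear section of $\Gr(2,V_5)$ --- which is what the proof in \cite{kp} exploits, the involution in odd dimensions arising from the $\Z/2$-graded (Clifford/double-cover) data attached to that quadric section rather than from any leftover block of the exceptional collection.
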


\subsection{The Kuznetsov--Perry conjecture}

Given a GM variety of dimension $n\in\{4,6\}$, we have constructed a K3 category $\cA_X$.\
The question is now the following: can $\cA_X$ be equivalent to the derived category of an actual K3 surface? The answer is no for $X$ very general (\cite[Proposition~2.29]{kp}).
 
 \begin{prop}
If the   category associated with a GM variety $X$ of even dimension is equivalent to the derived category of a K3 surface, the variety $X$ is Hodge special (see Section~\ref{se45}).
\end{prop}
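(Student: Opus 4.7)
The natural strategy is to compare Hodge-theoretic invariants of the two categories, using the Mukai (or topological K-theory) Hodge structure attached to an admissible subcategory. Concretely, to any admissible subcategory $\cA$ of $\Db(Y)$ with $Y$ smooth projective, one can attach a weight-two integral Hodge structure $\widetilde H(\cA,\Z)$ whose complexification is computed from the Hochschild homology, and whose $(1,1)$-part contains the images of the numerical K-theory classes of objects in $\cA$. For $\cA=\Db(S)$ with $S$ a K3 surface, this recovers the classical Mukai lattice $\widetilde H(S,\Z)=H^0(S,\Z)\oplus H^2(S,\Z)\oplus H^4(S,\Z)$ with its standard weight-two Hodge structure.

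The plan is as follows. First, I would identify $\widetilde H(\cA_X,\Z)$ for a GM fourfold (or sixfold) $X$ in terms of cohomology of $X$. Using the s.o.d.\ of $\Db(X)$ recalled in the text, and the fact that the exceptional collection $\cO_X,\cU_X^\vee,\dots$ is orthogonal to $\cA_X$, one expresses $\widetilde H(\cA_X,\Z)$ as the orthogonal complement, inside a Mukai-type lattice built from $H^*(X,\Z)$, of the Mukai vectors of the exceptional objects. By Theorem~\ref{th44}(a), the transcendental part of this Hodge structure contains, as a sub-Hodge structure of the same rank, the vanishing cohomology $H^4(X;\Z)_{\rm van}$ (Tate-twisted appropriately), which is a Hodge structure of K3 type of rank $22$. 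The algebraic (i.e.\ $(1,1)$-) part of $\widetilde H(\cA_X,\Z)$ then decomposes as the orthogonal to the exceptional collection inside an algebraic lattice of $H^*(X)$, plus the algebraic part of the vanishing cohomology.

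Second, I would compute the rank of the algebraic part of $\widetilde H(\cA_X,\Z)$ when $X$ is not Hodge special. By definition, $X$ is Hodge special precisely when $H^4(X;\Z)_{\rm van}\cap H^{2,2}(X)\neq 0$; hence for $X$ nonHodge-special, the algebraic rank of $\widetilde H(\cA_X,\Z)$ comes entirely from the complement of the exceptional collection inside the fixed lattice $\gamma^*H^*(\Gr(2,V_5),\Z)$ enlarged by the two canonical Mukai generators (the classes of $\cO_X$ and of a point). A direct calculation (which is the technical core of the argument, and which I expect to be the main obstacle, because it requires a careful bookkeeping of Mukai vectors of twists of $\cO_X$ and $\cU_X^\vee$ and of the pairing coming from the Euler form) should give that this rank is exactly $2$, spanned by two isotropic classes analogous to $(1,0,0)$ and $(0,0,1)$ in $\widetilde H(S,\Z)$.

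Finally, suppose $\cA_X\simeq \Db(S)$ for some K3 surface $S$. Any such equivalence induces an isomorphism of Hodge structures $\widetilde H(\cA_X,\Z)\isomto \widetilde H(S,\Z)$, hence in particular an isomorphism of the $(1,1)$-sublattices. The algebraic part of $\widetilde H(S,\Z)$ always contains $H^0(S,\Z)\oplus\NS(S)\oplus H^4(S,\Z)$ and therefore has rank $\geq 3$ (since any projective K3 surface has Picard rank at least $1$). Combined with the previous step this forces $X$ to be Hodge special. The only delicate point is to make sure that the derived equivalence, a priori only $\C$-linear and triangulated, induces a genuine isomorphism of \emph{integral} weight-two Hodge structures on $\widetilde H$; this follows once one knows that $\widetilde H(\cA)$ is a derived invariant of the pair $(\cA,\Se_\cA)$, which is the K3-category analogue of Mukai's classical theorem for $\Db(S)$ and is the content of Kuznetsov--Perry's framework for K3 categories.
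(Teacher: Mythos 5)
The paper does not actually prove this statement; it quotes it from \cite[Proposition~2.29]{kp}. Your strategy --- comparing the algebraic part of the Mukai Hodge structure $\widetilde H(\cA_X,\Z)$ with that of $\widetilde H(S,\Z)$ and deriving a contradiction from the rank count $2<3$ --- is the standard one (it is Addington--Thomas's argument \cite{at} for cubic fourfolds, transposed to GM fourfolds as in \cite{per, ppz}), and it is exactly the framework the survey itself invokes in its footnote on \cite[Theorem~1.9]{ppz}; the proof in \emph{loc.\ cit.} is in substance the same count, carried out with rational coefficients: any equivalence $\cA_X\simeq\Db(S)$ is of Fourier--Mukai type, the Mukai vector of its kernel induces a nonzero morphism of Hodge structures from the transcendental lattice of $S$ (rank $\le 21$) into $H^4(X;\Q)_{\rm van}$, and for a nonHodge-special $X$ the latter is an irreducible Hodge structure of rank $22$, so no such morphism exists. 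Two remarks on your version. First, the ``delicate point'' you flag (that the equivalence induces an isometry of \emph{integral} Mukai Hodge structures) is genuine and is the content of Perry's construction of $\widetilde H(\cA_X,\Z)$ from topological K-theory; but for this proposition it can be bypassed entirely, since Hodge-speciality is a condition on rational cohomology and $\rho(S)\ge 1$ already gives algebraic rank $\ge 3$ over $\Q$. Second, your parenthetical description of the generic rank-$2$ algebraic sublattice as spanned by two isotropic classes analogous to $(1,0,0)$ and $(0,0,1)$ cannot be right: those two classes span a hyperbolic plane, and by \cite[Theorem~1.9]{ppz} a hyperbolic plane inside $\widetilde H^{1,1}(\cA_X,\Z)$ would force $\cA_X\simeq\Db(S)$ for \emph{every} GM fourfold, contradicting the very statement being proved. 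Fortunately only the rank of that lattice enters your argument, and the rank bookkeeping ($\rank K_{\rm top}(\cA_X)=28-4=24$, algebraic part of rank $6-4=2$ for nonHodge-special $X$) is correct.
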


When a GM variety $X$ of dimension 4 has an associated K3 surface $S$ in the Hodge theoretic sense of Section~\ref{se46}, its category $\cA_X$ is equivalent to the category $\Db(S)$.\ However, the converse does not hold.\footnote{To prove these two statements, one has to use \cite[Theorem~1.9]{ppz}, which says that the   category $\cA_X$ associated with a GM variety $X$ of   dimension 4 or 6 is equivalent to the derived category of a K3 surface if and only if a certain lattice $\widetilde H^{1,1}(\cA_X;\Z)$ associated with $\cA_X$ contains a hyperbolic plane.\ 

If $X$ is a GM fourfold with a Hodge theoretically associated K3 surface $S$, the lattice $\widetilde H^{1,1}(\cA_X;\Z)$ does contain a hyperbolic plane by \cite[Theorem~1.2]{per}, and the categories $\cA_X$   and $\Db(S)$ are equivalent.

Then, Pertusi constructed in \cite[Section~3.3]{per}  GM fourfolds $X$ such that $\widetilde H^{1,1}(\cA_X;\Z)$ does contain a hyperbolic plane (so that the category $\cA_X$   is equivalent to the derived category of a K3 surface), but with no Hodge theoretically associated K3 surface.\ Many thanks to Alex Perry for these explanations.}
 
Therefore (\cite[Conjecture~3.12]{kp}, \cite[Remark~1.10]{ppz}), there are  two competing conjectural conditions
(existence of a categorical versus Hodge-theoretic K3 surface) for the rationality of a GM
fourfold.\ This should be contrasted with the case of cubic fourfolds, where these conditions
are known to be equivalent.

 \end{document}